\newcommand{\zz}{{\Bbb Z}}
\newcommand{\pp}{{\Bbb P}}
\newcommand{\ff}{{\Bbb F}}
\newcommand{\ddim}{\operatorname{dim}}
\newcommand{\ddeg}{\operatorname{deg}}
\newcommand{\kker}{\operatorname{Ker}}
\newcommand{\Homi}{\underline{\operatorname{Hom}}}
\newcommand{\Hom}{\operatorname{Hom}}
\newcommand{\op}[1]{\operatorname{#1}}
\newcommand{\kbar}{\overline{k}}
\newcommand{\ffi}{\varphi}
\newcommand{\la}{\langle}
\newcommand{\ra}{\rangle}
\newcommand{\lva}{\langle\!\langle}   
\newcommand{\rva}{\rangle\!\rangle}   
\newcommand{\row}{\rightarrow}
\newcommand{\low}{\leftarrow}
\newcommand{\lrow}{\longrightarrow}
\renewcommand{\leq}{\leqslant}
\renewcommand{\geq}{\geqslant}
\newcommand{\et}{\acute{e}t}
\newcommand{\nichego}[1]{}
\newcommand{\ov}[1]{\overline{#1}}
\newcommand{\un}[1]{\underline{#1}}
\newcommand{\wt}[1]{\widetilde{#1}}
\newcommand{\dmk}{\op{DM}(k)}
\newcommand{\dmgmkD}{\op{DM}_{gm}(k;\zz/2)}
\newcommand{\dmgmkF}[1]{\op{DM}_{gm}(k;#1)}
\newcommand{\dmgmEF}[2]{\op{DM}_{gm}(#1;#2)}
\newcommand{\dmkF}[1]{\op{DM}(k;#1)}
\newcommand{\dmEF}[2]{\op{DM}(#1;#2)}
\newcommand{\hii}{{\cal X}}
\newcommand{\whii}{\widetilde{{\cal X}}}
\newcommand{\moco}[2]{H_{{\cal M}}^{#1,#2}}
\newcommand{\moho}[2]{H^{{\cal M}}_{#1,#2}}
\newcommand{\gm}{{\Bbb G}_m}
\newcommand{\calg}{{\cal G}}
\newcommand{\calh}{{\cal H}}
\newcommand{\comg}{\ov{{\cal G}}}
\newcommand{\Qed}{\hfill$\square$\smallskip}
\newenvironment{proof}{\noindent{\it Proof}:}{\vskip 5mm}
\newtheorem{prop}{Proposition}[section]{\bf}{\it}
\newtheorem{thm}[prop]{Theorem}{\bf}{\it}
\newtheorem{lem}[prop]{Lemma}{\bf}{\it}
{\bf}{\it}
\newtheorem{defi}[prop]{Definition}{\bf}{\it}
{\bf}{\it}
{\bf}{\it}
\newtheorem{exa}[prop]{Example}{\bf}{\it}
\newtheorem{rem}[prop]{Remark}{\bf}{}
\newtheorem{que}[prop]{Question}{\bf}{\it}
\newtheorem{cor}[prop]{Corollary}{\bf}{\it}
{\bf}{\it}
\begin{document}

\title{The Balmer spectrum of Voevodsky motives and pure symbols}
\author{Alexander Vishik\footnote{School of Mathematical Sciences, University
of Nottingham}}
\date{}
\maketitle

\begin{abstract}
In this article we introduce invariants of points of the Balmer spectrum of the Voevodsky motivic category 
$\dmkF{\ff_2}$ whose values are {\it light Rost cycle submodules} of the module of pure symbols in Milnor's K-theory (mod $2$). As an application, we show that {\it isotropic points} of the Balmer spectrum are closed. 
We also introduce the notion of points of a {\it boundary type} and show that this class contains isotropic points, but not the etale one.
\end{abstract}

\section{Introduction}

The most important features of the Voevodsky motivic category $\dmk$ are encoded in the Balmer spectrum $\op{Spc}$ \cite{Bal-1} of it. This generalisation of the Zariski spectrum of a commutative ring is a ringed topological space whose points are prime thick tensor ideals of the category.
While the topological counterpart of the motivic category is the derived category of abelian groups whose Balmer spectrum (of the compact part) is simply $\op{Spec}(\zz)$, the Voevodsky category itself is substantially more complicated, which is reflected in the structure of the Balmer spectrum. 
To start with, the Galois group of the base field $k$ enters the game. The category of etale motives (which is, in a sense, a simplified version of $\dmk$) is described in terms of it. This group classifies possible zero-dimensional varieties over $k$ whose motives generate the
subcategory $\op{DAM}(k)$ of Artin motives. Balmer
and Gallauer in \cite{BG} described completely the spectrum of this subcategory in terms of subgroups of $Gal(\ov{k}/k)$ and shown that it is quite non-trivial. This bounds from below the complexity
of the Balmer spectrum of $\dmk$, as it naturally surjects 
to that of $\op{DAM}(k)$. But the varieties of positive dimension can't be reduced to zero-dimensional ones and the fibers of the projection $\op{Spc}(\dmk^c)\row
\op{Spc}(\op{DAM}(k)^c)$ are expected to be ``large''.

The spectra $\op{Spec}(E)$ of finitely generated field
extensions $E/k$ may be considered as ``atomic'' objects 
of the algebro-geometric world (compare with a single atomic object = point in topology), which leads to the
idea of {\it isotropic realisations} \cite{Iso}. Such realisations
$$
\psi_{p,E}:\dmk\row\dmEF{\wt{E}/\wt{E}}{\ff_p}
$$
are parametrized by the choice of a prime $p$ and a 
$p$-equivalence class of field extensions of $k$ (where two extensions are $p$-equivalent, if $p$-isotropy of $k$-varieties over them is equivalent). The target here is the {\it isotropic motivic category} over a {\it flexible} field, which is way simpler than the global Voevodsky category. It is shown in \cite[Theorem 5.13]{INCHKm}
that the kernel ${\frak a}_{p,E}$ of the isotropic realisation is a prime ideal of $\dmk^c$. We get plenty of
{\it isotropic} points of the Balmer spectrum
(see \cite[Example 5.14]{INCHKm}).

In the current paper, we will impose certain coordinate system on the Balmer spectrum of Voevodsky category, which
is described in terms of pure symbols in Milnor's K-theory of finitely generated field extensions of $k$. The situation is understood better in the case of points of characteristic $2$. This is related to the fact that the $2$-isotropy of projective varieties is controlled by pure symbols $(mod\,2)$ over purely transcendental extensions of the base field - see \cite[Theorem 1.1]{VPS} (the same result is expected for odd primes). For this reason, we restrict our attention to the Voevodsky category
$\dmkF{\ff_2}$ with $\ff_2$-coefficients (whose Balmer spectrum constitutes the characteristic $2$ part of the spectrum of $\dmk$).

Our coordinate system is constructed with the help of certain ``test spaces'' - compact objects parametrized
by pure symbols $(mod\,2)$. Such compact objects, {\it Rost motives}, $M_{\alpha}$ were discovered by Rost
\cite{R-mPff} as direct summands of motives of Pfister quadrics $Q_{\alpha}$ (which are {\it norm-varieties} for symbols $\alpha$, in the sense, that the $2$-isotropy of $Q_{\alpha}$ is equivalent to the triviality of $\alpha$). 
Over the algebraic closure, $M_{\alpha}$ splits as a sum of two Tate-motives, but it is indecomposable as long as $\alpha$ is non-trivial. It is a motive of an affine quadric (the complement to a hyperplane section in $Q_{\alpha}$), that is, a motive of a non-split sphere. 
One may cut-out the two cells of the mentioned sphere and
get the {\it reduced Rost motive} $\wt{M}_{\alpha}$, which
in some sense, plays a dual role - see \cite[proof of Theorem 3.5]{Iso}. In particular, the thick tensor ideals in Voevodsky category generated by $M_{\alpha}$ and $\wt{M}_{\alpha}$ form orthogonals to each other - see Proposition \ref{orthog-MMt}. Here
$\alpha$, $Q_{\alpha}$, $M_{\alpha}$ and $\wt{M}_{\alpha}$
all carry exactly the same information. 

Since $M_{\alpha}\otimes\wt{M}_{\alpha}=0$, any prime ideal ${\frak a}$
of the Voevodsky category contains, at least, one of them, for any given $\alpha$. We may try to distinguish points
of the Balmer spectrum, by looking at which {\it Rost motives}, respectively, {\it reduced Rost motives} they contain. This identifies the etale point and isotropic points over flexible fields, but is not enough, in general.
To enhance our collection of test spaces, we consider pure symbols not only over the ground field, but also over all finitely generated extensions of it. It appears that the
{\it reduced Rost motive} is more suitable for the task.
We introduce new objects {\it extended reduced Rost motives} $\wt{M}_{\alpha,Y}$. These correspond to pure symbols $\alpha\in K^M_*(E)/2$, where $E/k$ is some finitely generated field extension, and are extensions of the
reduced Rost motive $\wt{M}_{\alpha}$ from $\op{Spec}(E)$
to some smooth $k$-neighbourhood $Y$ of it. Thus, it depends also on the neighbourhood $Y$, but the thick tensor ideal it generates depends on $\alpha$ only - see
Corollary \ref{ideal-indep-of-Y}. To each point ${\frak a}$ of the Balmer spectrum we assign two invariants 
${\cal G}({\frak a})$ and ${\cal H}({\frak a})$ with
values in the subsets of $\op{Pure}$ = the collection of
all pure symbols (mod $2$) over all finitely generated extensions of $k$. Namely, ${\cal G}({\frak a})$ contains
those symbols $(E,\alpha)$, for which $\wt{M}_{\alpha,Y}^{\perp}\subset{\frak a}$, while ${\cal H}({\frak a})$
contains those, for which $\wt{M}_{\alpha,Y}\in{\frak a}$ (note that, for symbols over the ground field, this is
the same as containing the {\it Rost motive}, respectively, the {\it reduced Rost motive}). 

We demonstrate that the introduced invariants are sufficiently
informative. In particular, they identify all isotropic points (as well as the etale one). We explicitly compute
these invariants for isotropic points - Proposition 
\ref{GH-iso-empty} and show that the isotropic ideal ${\frak a}_F$ is generated by what is prescribed by ${\cal G}({\frak a}_F)$ - Proposition
\ref{iso-ideal-descr}. This implies that {\it isotropic points} of the Balmer spectrum are closed - Theorem 
\ref{iso-closed}. In particular, there are no specialisation relations among them. 

We may consider $\ov{{\cal G}}({\frak a})=\op{Pure}\backslash{\cal G}({\frak a})$. Then $\ov{{\cal G}}({\frak a})\subset{\cal H}({\frak a})$.
We show that there is a rich structure on $\ov{{\cal G}}({\frak a})$ and ${\cal H}({\frak a})$. Namely, these are not just subsets of $\op{Pure}$, but {\it light Rost cycle submodules}, i.e. stable under restrictions of fields, multiplication by $\op{Pure}$ and residues with respect to DVRs (that is, all the operations of a {\it Rost cycle module} \cite{RCM}, aside from transfers) - see Theorem 
\ref{Gbar-H-LRCM}. 

To every point ${\frak a}$ of the Balmer spectrum we assign a $\stackrel{2}{\sim}$-equivalence class $K({\frak a})$ of field extensions (defined in terms of ${\cal H}({\frak a})$)
and so, an isotropic point ${\frak a}_{K({\frak a})}$.
In the case of an isotropic point, it recovers the original one. This gives certain restrictions on ${\cal H}({\frak a})$ - Proposition \ref{H-any-iso}. 
We introduce the notion of a point of a {\it boundary type}, that is, a point for which ${\cal G}({\frak a})\cap{\cal H}({\frak a})=\emptyset$ (note, that 
${\cal G}({\frak a})\cup{\cal H}({\frak a})=\op{Pure}$
always). We show that all isotropic points are of a {\it boundary type}, while the {\it etale} point is not - see
Proposition \ref{GH-iso-empty} and Example \ref{exa-bt-iso-et} (the latter fact leads to the loss of double grading on Tate-motives in the etale realisation). This raises the natural question: is the {\it boundary type} the same as {\it closed}?

The article is organised as follows. In Section \ref{two}
we introduce the {\it Rost motives}, {\it reduced Rost motives} and, finally, the main object: the 
{\it extended reduced Rost motives} and study the functoriality of the latter with respect to various operations on pure symbols. In Section \ref{three} we introduce the ${\cal G}$-${\cal H}$-invariants of points of the Balmer spectrum and prove our main results.

\section{Test spaces}
 \label{two}

We will try to distinguish points of the Balmer spactrum, that is, prime ideals of the Voevodsky motivic category, with the help of 
some ``test spaces''. These will be parametrised by pure symbols over various field extensions of the base field, which will be assumed of characteristic zero.

\subsection{Rost motives}

Let $\alpha=\{a_1,a_2,\ldots,a_n\}\in K^M_*(k)/2$ be a pure symbol (mod $2$), $q_{\alpha}=\lva a_1,\ldots,a_n\rva$ be the respective Pfister form and $Q_{\alpha}$ - the Pfister quadric.

By the result of Rost \cite{R-mPff}, the motive of a Pfister quadric is
divisible by a motive of a large projective space:
$M(Q_{\alpha})=M_{\alpha}\otimes M(\pp^{2^{n-1}-1})$.
The quotient $M_{\alpha}$ is called the {\it Rost motive}. Over algebraic closure it splits into the sum of
just two Tates: $M_{\alpha}|_{\kbar}=T\oplus T(2^{n-1}-1)[2^n-2]$. Two of the four maps from this decomposition are defined already over the base field, which gives a diagram in $\op{DM}_{gm}(k,\ff_2)$:
\begin{equation}
\label{romo}
\xymatrix{
& T \ar[d]^{[1]} \ar[rd]^{[1]} & \\
M_{\alpha} \ar[ru] & R \ar[l] \ar[r] \ar@{}[lu]|-(0.22){\star} \ar@{}[rd]|-(0.22){\star} & \wt{M}_{\alpha} \ar[ld]^{[1]}\\
& T(d)[2d] \ar[u] \ar[lu] &
},
\end{equation}
where $d=2^{n-1}-1$, and $\wt{M}_{\alpha}$ is the {\it reduced Rost motive}. Here $\alpha$, $Q_{\alpha}$, $M_{\alpha}$, $\wt{M}_{\alpha}$ all carry the same information. In particular, $\wt{M}_{\alpha}$ vanishes
simultaneously with $\alpha$. This object has only $2^{n-1}$ non-trivial homology with respect to the homotopy $t$-structure, all isomorphic to the {\it Rost cycle
module} $\alpha\cdot k^M_*$, up to shift, where $k^M_*=K^M_*/2$ (recall, that the heart of the homotopy $t$-structure is the category of Rost cycle modules - see \cite{Deg} and \cite{RCM}).

\noindent {\bf The case $n=0$:} The only symbol $\alpha=\{\}=1$ of degree zero gives $\wt{M}_{\alpha}=k^M_*=\op{Cone}(\tau)$, where $\tau:T(-1)\row T$ corresponds to the only non-zero element of $\moco{0}{1}(k;\ff_2)$. In this case, $M_{\alpha}=0$.

Let $\hii_{\alpha}$ be the motive of the \v{C}ech 
simplicial scheme of the respective norm-variety - Pfister quadric $Q_{\alpha}$ (for $n=0$, $Q_{\alpha}=\emptyset$).
It is a $\otimes$-projector in $\op{DM}(k;\ff_2)$.
Let $\whii_{\alpha}$ be the reduced motive of our
\v{C}ech simplicial scheme - the complementary projector.
Then $M_{\alpha}\otimes\hii_{\alpha}=M_{\alpha}$ and
$\wt{M}_{\alpha}\otimes\whii_{\alpha}=\wt{M}_{\alpha}$ -
see the proof of \cite[Theorem 3.5]{Iso}. In particular,
$M_{\alpha}\otimes\wt{M}_{\alpha}=0$ (since the projectors are orthogonal to each other).

\begin{prop}
\label{orthog-MMt}
For $n>0$, the thick tensor ideals $\la M_{\alpha}\ra$ and $\la \wt{M}_{\alpha}\ra$ of $\op{DM}_{gm}(k;\ff_2)$ are $\otimes$-orthogonals to each other.
\end{prop}

\begin{proof}
Since $\hii_{\alpha}$ and $\whii_{\alpha}$ are complementary projectors, the localising tensor ideals $\la\hii_{\alpha}\ra_l$ and $\la\whii_{\alpha}\ra_l$ of $\op{DM}(k;\ff_2)$ are each other $\otimes$-orthogonals. Since $\hii_{\alpha}$ belongs to the localising subcategory generated by $M_{\alpha}$ (which, in turn is an extension of two shifted copies of the former), the compact part of $\la\hii_{\alpha}\ra_l$ is $\la M_{\alpha}\ra$.
The ideal $\la\wt{M}_{\alpha}\ra$ is the compact part of $\la\whii_{\alpha}\ra_l$. Indeed, $\wt{M}_{\alpha}$ is an extension of two (shifted) copies of $\whii_{\alpha}$, and if a compact $N$ is such that $N\otimes\whii_{\alpha}=N$, then $N\otimes\wt{M}_{\alpha}=\op{Cone}[-1](N(d)[2d+1]\row N)$. Note that $N$ vanishes in the etale topology (since $\whii_{\alpha}$ does), hence, by Proposition \ref{et-comp-fin-diag}, this extension is nilpotent and $N\in\la\wt{M}_{\alpha}\ra$. This also shows that $\la M_{\alpha}\ra^{\perp}=\la\wt{M}_{\alpha}\ra$ (since $M_{\alpha}$ is just another generator of $\la\hii_{\alpha}\ra_l$). 
Finally, if a compact $N$ is such that $N\otimes\wt{M}_{\alpha}=0$, then, over any field extension, the motivic homology of $N\otimes\whii_{\alpha}$ are $(d)[2d+1]$ periodic (for $d=2^{n-1}-1$), and so, is zero, as it vanishes below certain diagonal. Hence, $N\in\la M_{\alpha}\ra$ (the compact part of $\la\whii_{\alpha}\ra_l^{\perp}$) and thus, $\la\wt{M}_{\alpha}\ra^{\perp}=\la M_{\alpha}\ra$.
 \Qed
\end{proof}

We can try to distinguish points of $\op{Spc}(\op{DM}_{gm}(k;\ff_2))$ using {\it Rost} and {\it reduced Rost} motives
$M_{\alpha}$ and $\wt{M}_{\alpha}$ as test spaces.

\begin{exa}
 \begin{itemize}
  \item[$(1)$] Let $k$ be {\it flexible} and ${\frak a}_{k}$ be the {\it isotropic} point corresponding to the trivial extension $k/k$. By the proof of \cite[Corollary 3.3]{Iso}, this ideal is generated by the motives of anisotropic Pfister quadrics over $k$, i.e. ${\frak a}_{k}=\la M_{\alpha}\,|\,\forall\alpha\neq 0\ra$. Note, that this ideal doesn't contain any reduced Rost motive $\wt{M}_{\alpha}$ (for $\alpha\neq 0$), since the isotropic motivic category $\op{DM}(k/k;\ff_2)$ possesses double grading on Tate-motives, which would have been lost, if the kernel of
  isotropic realisation would contain both $M_{\alpha}$
  and $\wt{M}_{\alpha}$, for the same $\alpha$ - see diagram (\ref{romo}).
  \item[$(2)$] Let ${\frak a}_{et}$ be the kernel of 
  the etale realisation with $\ff_2$-coefficients. Then
  ${\frak a}_{et}=\la\op{Cone}(\tau)\ra$. Indeed, clearly, $\op{Cone}(\tau)$ vanishes in the etale realisation, since $\tau$ is inverted there. Conversely, if a compact object $U$ vanishes in the etale realisation, then so does $U\otimes U^{\vee}$, which then must have only finitely many diagonals in motivic homology (by the Beilinson-Lichtenbaum conjecture). This shows that $\tau\otimes id_U$ is nilpotent, and so, $U$ is a direct summand of an object which is an
  extension of finitely many (shifted) copies of $U\otimes\op{Cone}(\tau)$
(cf. Corollary \ref{et-comp-fin-diag}). 
  
  Thus, ${\frak a}_{et}=\la\op{Cone}(\tau)\ra=
  \la\wt{M}_{\alpha}\,|\,\forall\alpha\ra$. The only Rost
  motive $M_{\alpha}$ it contains is $M_{\{\}}=0$. This leads to the loss of double grading on Tate-motives in
  the etale realisation.
 \end{itemize}
\end{exa}

As we saw, in the above examples, it was sufficient to know which Rost (respectively, reduced Rost) motives were
contained in the ideal, to identify it. Unfortunately, in general, it is not enough. We have to consider pure symbols not only over the ground field, but also over finitely generated extensions of it, and generalise the notion of Rost motives.

\subsection{Extended reduced Rost motives}

Consider the set of all pure symbols over all finitely-generated extensions of $k$:
$$
\op{Pure}=\{(E,\alpha)\,|\,E/k-\text{fin. gen.},\,\alpha\in k^M_*(E)-\text{pure}\}\hspace{1mm}\subset\hspace{1mm} k^M_*, 
$$
considered as a subset of the Rost cycle module $k^M_*$.
It is closed under: 1) restriction of fields, 2) derivatives $\partial$ w.r.to DVRs and 3) action of ${\cal O}^*$, that is all the operations of a Rost cycle module (\cite{RCM}), aside from transfers. We will call 
such a structure a {\it light Rost cycle module} (in 
\cite{kerM}, the term {\it weak Rost cycle module} was used).

Let $(E,\alpha)\in\op{Pure}$. 
The quadric $Q_{\alpha}$ and the projector defining the 
Rost motive $M_{\alpha}$ over $E$ are defined in some
smooth $k$-neighbourhood $Y$ of $\op{Spec}(E)$ producing a ``(relative) extended'' Rost motive $M_{\alpha/Y}\in\op{DM}_{gm}(Y;\ff_2)$. In particular, $\alpha$ is unramified on $Y$.
Note that the composition $T_Y(d)[2d]\row M_{\alpha/Y}\row T_Y$ in the category of motives over $Y$ is zero, since it resides in the zero group $\moco{-2d}{-d}(Y;\ff_2)$, for $n>1$,
respectively, is equal to $2=0$, for $n=1$. 
Moreover, the group $\moco{-2d-1}{-d}(Y;\ff_2)$ is zero as well, so the lifting in the (analogue of the)
diagram (\ref{romo}) is unique, and we get a canonical compact object
$\wt{M}_{\alpha/Y}\in\op{DM}_{gm}(Y;\ff_2)$. 

Let $Q_{\alpha/Y}\row Y$ be the smooth quadric fibration
extending $Q_{\alpha}$ and $\hii_{\alpha/Y}$ be the motive of the respective \v{C}ech simplicial scheme
considered as an object of $\op{DM}(Y;\ff_2)$. It is a $\otimes$-projector in this category. The complementary projector is given by the motive of the respective reduced simplicial scheme $\whii_{\alpha/Y}=\op{Cone}(\hii_{\alpha/Y}\row T_Y)\in\op{DM}(Y;\ff_2)$.
We have natural maps $\hii_{\alpha/Y}(d)[2d]\row M_{\alpha/Y}\row\hii_{\alpha/Y}$. The standard argument
of Voevodsky (see the proof of \cite[Theorem 4.4]{VoMil})
shows that these extend to a distinguished triangle:
$M_{\alpha/Y}=\op{Cone}[-1](\hii_{\alpha/Y}\row\hii_{\alpha/Y}(d)[2d+1])$ in $\op{DM}(Y;\ff_2)$. Tensoring the (analogue of) octahedron (\ref{romo}) by $\whii_{\alpha/Y}$ we get: 
$\wt{M}_{\alpha/Y}=\op{Cone}[-1](\whii_{\alpha/Y}(d)[2d+1]\row\whii_{\alpha/Y})$ - see the proof of \cite[Theorem 3.5]{Iso}. Denote as $\hii_{\alpha,Y}$,
$\whii_{\alpha,Y}$, $M_{\alpha,Y}$ and $\wt{M}_{\alpha,Y}$ the images of $\hii_{\alpha/Y}$, $\whii_{\alpha/Y}$,
$M_{\alpha/Y}$ and $\wt{M}_{\alpha/Y}$ under the natural functor $\pi_{\#}:\op{DM}(Y;\ff_2)\row\op{DM}(k;\ff_2)$. Here
$\wt{M}_{\alpha,Y}$ is our {\it extended reduced Rost motive}.

This object vanishes simultaneously with $\alpha$.

\begin{prop}
 \label{triv-erRm}
 For any field extension $L/k$,
 $$
 \wt{M}_{\alpha,Y}|_{L}=0\hspace{2mm}\Leftrightarrow
 \hspace{2mm}\alpha_{L(Y)}=0.
 $$
\end{prop}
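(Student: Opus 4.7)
I would prove both directions through the homotopy $t$-structure on $\op{DM}(L;\ff_2)$, whose heart is the category of Rost cycle modules and in which a bounded object vanishes iff all its homologies vanish. The key input is the description of $\wt{M}_\alpha$ recalled just above Proposition~\ref{triv-erRm}: its only non-trivial homotopy-$t$-homologies are isomorphic, up to shift, to the Rost cycle module $\alpha\cdot k^M_*$.

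By base change, $\wt{M}_{\alpha,Y}|_L=\pi_{L,\#}(\wt{M}_{\alpha/Y_L})$, where $\pi_L\colon Y_L\to\op{Spec}(L)$ is the structure map. The relative version of the above description identifies the homotopy-$t$-homologies of $\pi_{L,\#}(\wt{M}_{\alpha/Y_L})$ with shifts of the cohomology of $Y_L$ valued in the Rost cycle module $\alpha\cdot k^M_*$, which by the D\'eglise--Rost framework (\cite{Deg},\cite{RCM}) is computed by the Gersten cycle complex of $Y_L$ with these coefficients. Using this dictionary, both implications reduce to elementary statements about sections of $\alpha\cdot k^M_*$ on $Y_L$.

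For $(\Leftarrow)$, assume $\alpha_{L(Y)}=0$. Since $\alpha$ is unramified on $Y$, it defines a section of the unramified sheaf $K^M_*/2$ on $Y_L$; its generic value is zero by hypothesis, and the restriction from unramified sections on the connected scheme $Y_L$ to the generic point $L(Y)$ is injective, so this section is identically zero. Therefore $\alpha_{\kappa(y)}=0$ at every point $y\in Y_L$, the coefficient module $\alpha\cdot k^M_*$ is the zero sheaf, the entire Gersten cycle complex vanishes, and hence so does every homotopy-$t$-homology of $\wt{M}_{\alpha,Y}|_L$. Thus $\wt{M}_{\alpha,Y}|_L=0$. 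For $(\Rightarrow)$, I would argue contrapositively: if $\alpha_{L(Y)}\neq 0$, then $\alpha$ itself is a nonzero unramified element of $\alpha\cdot k^M_*$ at the generic point of $Y_L$, representing a nonzero class in the degree-$0$ cohomology of the Gersten complex. Under the dictionary, this produces a nonzero Rost cycle module among the homotopy-$t$-homologies of $\wt{M}_{\alpha,Y}|_L$ whose value at $L$ contains $\alpha$, so $\wt{M}_{\alpha,Y}|_L\neq 0$.

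The main obstacle is the rigorous identification of the homotopy-$t$-homologies of the pushforward $\pi_{L,\#}\wt{M}_{\alpha/Y_L}$ with the Gersten cycle complex of $Y_L$ valued in $\alpha\cdot k^M_*$; once this is set up via \cite{Deg} and \cite{RCM} combined with the explicit homological description of $\wt{M}_{\alpha/Y_L}$, the rest is the sheaf-theoretic observation that an unramified section on a connected scheme is determined by its generic value, together with the baseline fact that $\wt{M}_\alpha=0\Leftrightarrow\alpha=0$.
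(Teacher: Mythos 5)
Your proposal hinges on a ``dictionary'' that is not established and, as stated, is not available: the identification of the homotopy-$t$-homologies of $\pi_{L,\#}(\wt{M}_{\alpha/Y_L})$ with (shifts of) the Gersten-complex cohomology of $Y_L$ with coefficients in $\alpha\cdot k^M_*$. The description recalled before the Proposition is for $\wt{M}_\alpha$ over a \emph{field}; over the base $Y_L$ you would first need its relative analogue, and then you would need $\pi_\#$ to be compatible with the homotopy $t$-structures. It is not: $\pi_\#$ is not $t$-exact, and what one actually gets is a coniveau/BGQ-type spectral sequence whose $E_1$/$E_2$-page contains the Gersten groups. This asymmetry is harmless for $(\Leftarrow)$ (vanishing of the coefficients kills the whole filtration; indeed the paper's own argument is even simpler -- the quadric fibration acquires a section at every point of $Y_L$, so $\whii_{\alpha,Y}|_L=0$), but it is fatal for your $(\Rightarrow)$: a nonzero unramified class such as $\alpha\in A^0(Y_L,\alpha\cdot k^M_*)$ only gives a nonzero class on the $E_2$-page, and without a survival argument it may die under the differentials, so you cannot conclude that any homotopy-$t$-homology (or any motivic cohomology group) of $\wt{M}_{\alpha,Y}|_L$ is nonzero.

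This survival step is exactly the content of the paper's proof, and nothing in your proposal plays its role. The paper works with the reduced \v{C}ech object rather than with $\wt{M}$ directly: it takes the nonzero class $\tau^{-1}\alpha\in\moco{n+1}{n-1}(\whii_{\alpha_{L(Y)}};\ff_2)$ of the generic fiber, shows that the BGQ differentials applied to it land in motivic cohomology groups of the special fibers which vanish because those fibers are reduced \v{C}ech simplicial schemes of $n$-fold Pfister quadrics (a specific computation, via \cite{OVV}), so the class lifts to a nonzero element of $\moco{n+1}{n-1}(\whii_{\alpha,Y}|_L;\ff_2)$; it then passes from $\whii_{\alpha,Y}|_L$ to $\wt{M}_{\alpha,Y}|_L$ using the triangle $\wt{M}_{\alpha,Y}=\op{Cone}[-1](\whii_{\alpha,Y}(d)[2d+1]\row\whii_{\alpha,Y})$ together with the observation that the motivic cohomology of $\whii_{\alpha,Y}$ cannot be $(d)[2d+1]$-periodic. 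In your write-up both of these ingredients -- the vanishing of the targets of the differentials and the non-periodicity argument replacing the unproven identification -- are missing, so the forward implication is not proved. The backward implication is essentially correct (specialisation of the unramified symbol, or equivalently sections of the Pfister fibration at every point), and agrees in spirit with the paper.
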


\begin{proof}
 $(\low)$ If $\alpha_{L(Y)}=0$, then the projection
 $(Q_{\alpha/Y}\row Y)_L$ has a section in the generic point of $Y_L$, hence, in every point of $Y_L$, and so, the
 reduced motive $\whii_{\alpha,Y}|_L$ of the respective \v{C}ech simplicial scheme is zero $\Rightarrow$\ $\wt{M}_{\alpha,Y}=0$.
 
 \noindent
 $(\row)$ Let $\whii_{\alpha_{L(Y)}}\in\op{DM}(L(Y);\ff_2)$ be the ``generic fiber'' of $\whii_{\alpha/Y}|_L\in\op{DM}(Y_L;\ff_2)$. If $\alpha_{L(Y)}\neq 0$, then
 we have a non-zero element 
 $\tau^{-1}\alpha\in\moco{n+1}{n-1}(\whii_{\alpha_{L(Y)}};\ff_2)$ (see, for example, \cite{OVV}). The Brown-Gersten-Quillen differentials applied
 to this element land in motivic cohomology groups
 $\moco{n+2-2r}{n-1-r}(\whii_{\alpha_{L(y)}};\ff_2)$
 of special fibers. But these groups are zero, since the
 respective fibers are reduced \v{C}ech simplicial schemes of $n$-fold Pfister quadrics (see loc. cit.). Hence, the above element
 lifts to a non-zero element in
 $\moco{n+1}{n-1}(\whii_{\alpha,Y}|_L;\ff_2)$. Since
 $$
 \wt{M}_{\alpha,Y}=\op{Cone}[-1](\whii_{\alpha,Y}(d)[2d+1]\row\whii_{\alpha,Y})
 $$ 
 and motivic cohomology of
 $\whii_{\alpha,Y}$ can't be $(d)[2d+1]$-periodic (as it vanishes below the $2$-nd diagonal), we get that motivic
 cohomology of $\wt{M}_{\alpha,Y}|_L$ is non-zero.
 Thus, $\wt{M}_{\alpha,Y}|_L\neq 0$. 
 \Qed
\end{proof}

The object $\wt{M}_{\alpha,Y}$ depends on the choice of $Y$, but below we will show that the thick tensor ideal it generates depends on $\alpha$ only - see Corollary \ref{ideal-indep-of-Y}. For this we will need some tools.

Let $Y$ be a smooth variety over $k$ and 
$\dmgmEF{Y}{\Lambda}$ be the category of geometric motives over $Y$ with $\Lambda$-coefficients \cite{CD}. It is a tensor triangulated category with the natural adjoint pair of functors:
\begin{equation}
\label{adj-funct-Yk}
(\pi_Y)_{\#}:\dmgmEF{Y}{\Lambda}\rightleftarrows\dmgmkF{\Lambda}:\pi_Y^*,
\end{equation}
where $\pi_Y^*$ is tensor.
For any morphism $f:Z\row Y$ of smooth varieties we have
a tensor triangulated functor
$$
f^*:\dmgmEF{Y}{\Lambda}\row\dmgmEF{Z}{\Lambda}.
$$

\begin{defi}
 We will call an object $\bar{A}\in\dmgmEF{Y}{\Lambda}$ a 
 {\it co-algebra}, if it may be equipped with a co-associative co-multiplication $\bar{A}\stackrel{\Delta}{\lrow}\bar{A}\otimes\bar{A}$ with a co-unit $\nu:\bar{A}\row{\mathbbm{1}}$.
\end{defi}

Note that if $f:Z\row Y$ is a morphism of smooth varieties and $\bar{A}\in\dmgmEF{Y}{\Lambda}$ is a {\it co-algebra}, then $\bar{B}=f^*(\ov{A})$ is a {\it co-algebra} in
$\dmgmEF{Z}{\Lambda}$.

\begin{exa}
 Let $M\in\dmgmEF{Y}{\Lambda}$ be any object. Then
 $\bar{A}=M\otimes M^{\vee}$ is a co-algebra. The co-unit is the canonical map $\nu:M\otimes M^{\vee}\row{\mathbbm{1}}$, while $\Delta$ is given by
 $$
 id_M\otimes\nu^{\vee}\otimes id_{M^{\vee}}:M\otimes M^{\vee}=M\otimes {\mathbbm{1}}\otimes M^{\vee}\row
 M\otimes M^{\vee}\otimes M\otimes M^{\vee}.
 $$
\end{exa}

\begin{prop}
 \label{coalg-pullback}
 Let $f:Z\row Y$ be a morphism of smooth varieties, $\bar{A}\in\dmgmEF{Y}{\Lambda}$ be a {\it co-algebra} and $\bar{B}=f^*(\bar{A})\in\dmgmEF{Z}{\Lambda}$. Let
 $A=(\pi_Y)_{\#}(\bar{A})$ and $B=(\pi_Z)_{\#}(\bar{B})$.
 Then $B\in\la A\ra$.
\end{prop}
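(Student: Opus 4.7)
My plan is to exhibit $B$ as a direct summand of $A\otimes B$; since $\la A\ra$ is a tensor ideal and $A\otimes B\in\la A\ra$, this gives $B\in\la A\ra$. First, I apply the projection formula for the adjunction $f_\#\dashv f^*$ to rewrite $f_\# f^*\bar A = f_\#(\mathbbm{1}_Z)\otimes\bar A$, which yields
\[
B = (\pi_Z)_\#(\bar B) = (\pi_Y)_\# f_\#(\bar B) = (\pi_Y)_\#(\bar A\otimes N), \qquad N:=f_\#(\mathbbm{1}_Z).
\]

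Next I construct $\phi:B\to A\otimes B$ and $\rho:A\otimes B\to B$. For $\phi$, I combine the comultiplication $\Delta$ with the unit $\eta_{\bar A}:\bar A\to\pi_Y^* A$ of the adjunction $(\pi_Y)_\#\dashv\pi_Y^*$: the composite in $\dmgmEF{Y}{\Lambda}$
\[
\bar A\otimes N\xrightarrow{\Delta\otimes\mathrm{id}_N}\bar A\otimes\bar A\otimes N\xrightarrow{\eta_{\bar A}\otimes\mathrm{id}\otimes\mathrm{id}}\pi_Y^* A\otimes\bar A\otimes N,
\]
pushed forward by $(\pi_Y)_\#$ and simplified via the projection formula $(\pi_Y)_\#(\pi_Y^* A\otimes\bar A\otimes N)=A\otimes B$, defines $\phi$. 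For $\rho$, I take $\rho:=\epsilon_B\circ(\tilde\nu\otimes\mathrm{id}_B)$, where $\tilde\nu:=(\pi_Y)_\#(\nu):A\to M(Y)$ is induced by the coalgebra counit and $\epsilon_B:M(Y)\otimes B\cong(\pi_Y)_\#\pi_Y^* B\to B$ is the counit of $(\pi_Y)_\#\dashv\pi_Y^*$ at $B$.

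To verify $\rho\circ\phi=\mathrm{id}_B$, I would chain three ingredients. Naturality of the unit gives $\pi_Y^*\tilde\nu\circ\eta_{\bar A}=\eta_{\mathbbm{1}_Y}\circ\nu$, which lets me rewrite $(\tilde\nu\otimes\mathrm{id})\circ\phi$ as $(\pi_Y)_\#$ of $(\eta_{\mathbbm{1}_Y}\otimes\mathrm{id}\otimes\mathrm{id})\circ((\nu\otimes\mathrm{id})\circ\Delta\otimes\mathrm{id})$. The coalgebra axiom $(\nu\otimes\mathrm{id})\circ\Delta=\mathrm{id}_{\bar A}$ collapses the inner factor to the identity, leaving $(\pi_Y)_\#(\eta_{\mathbbm{1}_Y}\otimes\mathrm{id}_{\bar A\otimes N})$. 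This map, under the projection-formula identification of $M(Y)\otimes B$ with $(\pi_Y)_\#\pi_Y^* B$, is the projection-formula incarnation of the unit $(\pi_Y)_\#(\eta_{\bar A\otimes N})$, hence is split by $\epsilon_B$ via the triangle identity $\epsilon_{(\pi_Y)_\# M}\circ(\pi_Y)_\#(\eta_M)=\mathrm{id}$.

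The main obstacle is precisely this triple bookkeeping. Because $(\pi_Y)_\#$ is not a monoidal functor, the coalgebra identities of $\bar A$ (living in $\dmgmEF{Y}{\Lambda}$) must be threaded carefully through the projection-formula isomorphisms, and the identification of $(\pi_Y)_\#(\eta_{\mathbbm{1}_Y}\otimes\mathrm{id}_M)$ with $(\pi_Y)_\#(\eta_M)$ at the level of $(\pi_Y)_\# M\to M(Y)\otimes(\pi_Y)_\# M$ requires the coherence of the projection-formula isomorphisms with the unit-counit data. Once this is confirmed, $B$ is a direct summand of $A\otimes B\in\la A\ra$, so $B\in\la A\ra$ and the proposition follows.
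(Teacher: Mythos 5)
The splitting idea itself --- use the comultiplication and the counit to exhibit $B$ as a direct summand of $A\otimes B$ --- is exactly the paper's, but your very first step introduces a genuine gap. You invoke a left adjoint $f_{\#}$ of $f^*$ together with the projection formula $f_{\#}f^*\bar{A}\cong f_{\#}(\mathbbm{1}_Z)\otimes\bar{A}$ in order to rewrite $B=(\pi_Y)_{\#}(\bar{A}\otimes N)$ with $N=f_{\#}(\mathbbm{1}_Z)$, and the whole construction of $\phi$ and the verification of $\rho\circ\phi=\mathrm{id}_B$ rest on this rewriting. But in the proposition $f:Z\row Y$ is an arbitrary morphism of smooth $k$-varieties, not a smooth morphism: in the motivic six-functor formalism the functor $f_{\#}$ and the projection formula you use are available only for smooth $f$. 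For a closed immersion --- and the paper applies this proposition precisely to such maps, e.g.\ to the diagonal embedding $Y\row Y\times Y$ in the proof of Proposition \ref{divisible-Mt} --- there is no left adjoint of $f^*$ with these properties, so $N$ is not defined and the identification $B=(\pi_Y)_{\#}(\bar{A}\otimes N)$ fails. (When $f$ happens to be smooth, your argument does go through, and the coherence bookkeeping you flag --- compatibility of the projection-formula isomorphisms with the unit/counit data --- is standard.)

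The paper's proof avoids $f_{\#}$ altogether: it uses only the adjunctions $(\pi_Y)_{\#}\dashv\pi_Y^*$ and $(\pi_Z)_{\#}\dashv\pi_Z^*$ (which exist because $Y$ and $Z$ are smooth over $k$), the pullback $f^*$, and the canonical natural transformation $(\pi_Z)_{\#}f^*\row(\pi_Y)_{\#}$ (adjoint to $f^*$ applied to the unit of $(\pi_Y)_{\#}\dashv\pi_Y^*$). One applies the comultiplication of the pulled-back coalgebra $\bar{B}=f^*(\bar{A})$ over $Z$, composes $(\pi_Z)_{\#}(\bar{B})\row(\pi_Z)_{\#}(\bar{B}\otimes\bar{B})\row B\otimes B\row B\otimes A$, and retracts onto $B$ using the counit $\nu$ and the $M(Y)$-action on $B$; commutativity of the resulting diagram gives the splitting for arbitrary $f$. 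If you want to keep the spirit of your computation, the fix is to run it over $Z$ with $\Delta_{\bar{B}}$ rather than descending $\bar{A}\otimes N$ to $Y$.
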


\begin{proof}
 We have the following commutative diagram
 $$
 \xymatrix{
 B \ar@{=}[r] & (\pi_Z)_{\#}(\bar{B}) \ar[r]^(0.43){\Delta_Z} \ar@{=}[rd] & (\pi_Z)_{\#}(\bar{B}\otimes\bar{B}) \ar[r] \ar[d]^{id\otimes\nu_Z} & B\otimes B \ar[r] \ar[d] & B\otimes A \ar[d] \\
 & & (\pi_Z)_{\#}(\bar{B}\otimes {\mathbbm{1}}) \ar[r] \ar@{=}[rd] & B\otimes M(Z) \ar[r] \ar[d] & B\otimes M(Y) \ar[d] \\
 & & & B \ar@{=}[r] & B
 }
 $$
where we identify $B\otimes B$ with $(\pi_{Z\times Z})_{\#}(\bar{B}\boxtimes\bar{B})$, $B\otimes M(Z)$ with $(\pi_{Z\times Z})_{\#}(\bar{B}\boxtimes {\mathbbm{1}})$, and similar for $B\otimes A$ and $B\otimes M(Y)$, and use the fact that $\bar{B}\otimes\bar{B}$ (respectively, $\bar{B}\otimes {\mathbbm{1}}$) is the diagonal restriction of $\bar{B}\boxtimes\bar{B}$ (respectively, $\bar{B}\boxtimes {\mathbbm{1}}$).
 Hence, $B$ is a direct summand of $B\otimes A$ and so, belongs to the thick tensor ideal generated by $A$.
 \Qed
\end{proof}

\begin{cor}
 \label{pi-dash-tensor-ideal}
Let $\bar{A}\in\dmgmEF{Y}{\Lambda}$ be a {\it co-algebra} and $A=(\pi_Y)_{\#}(\bar{A})$.
Then 
$$
(\pi_Y)_{\#}(\la\bar{A}\ra)=\la A\ra.
$$
\end{cor}

\begin{proof}
The inclusion $\supset$ follows from the projection formula. For the opposite inclusion, observe, that $\dmgmEF{Y}{\Lambda}$ is generated by the motives $M(z)$ of smooth maps
$z:Z\row Y$ over $Y$. In particular, the thick tensor ideal $\la\bar{A}\ra$ is generated as 
a thick subcategory by objects $\bar{A}\otimes M(z)$. 
Let $\bar{A}\boxtimes M(z)\in\dmgmEF{Y\times Y}{\Lambda}$ - be the external tensor product, and $\Delta: Y\row Y\times Y$ - the diagonal map. Then $\Delta^*(\bar{A}\boxtimes M(z))=\bar{A}\otimes M(z)$ and these objects are co-algebras. 
So, by Proposition \ref{coalg-pullback}, 
$(\pi_Y)_{\#}(\bar{A}\otimes M(z))\in\la(\pi_{Y\times Y})_{\#}(\bar{A}\boxtimes M(z))\ra=\la A\otimes M(Z)\ra\subset\la A\ra$. Since $(\pi_Y)_{\#}$ respects shifts, cones and direct summands, it maps $\la\bar{A}\ra$ to $\la A\ra$.
 \Qed
\end{proof}

Let $E/k$ be a finitely-generated extension, $\alpha\in K^M_n(E)/2$ be a pure symbol (mod 2), $Y$ be a smooth neighbourhood of $\op{Spec}(E)$, where $\alpha$ is unramified and $\wt{M}_{\alpha/Y}\in\dmgmEF{Y}{\ff_2}$ be the respective {\it (relative) extended reduced Rost motive}.  
It fits the following diagram:
\begin{equation}
\label{eromo}
\xymatrix{
& T \ar[d]^{[1]} \ar[rd]^{[1]} & \\
M_{\alpha/Y} \ar[ru] & R \ar[l] \ar[r] \ar@{}[lu]|-(0.22){\star} \ar@{}[rd]|-(0.22){\star} & \wt{M}_{\alpha/Y} \ar[ld]^{[1]}\\
& T(d)[2d] \ar[u] \ar[lu] &
},
\end{equation}
where $M_{\alpha/Y}$ is the (relative) extended Rost motive, which is a direct summand in the motive $M(Q_{\alpha}/Y)$ of the 
Pfister quadric fibration.
In $\dmEF{Y}{\ff_2}$, this motive is an extension of two shifted copies of the motive of the respective reduced
\v{C}ech simplicial scheme $\whii_{\alpha/Y}$:
\begin{equation}
\label{reromo-rCech}
\whii_{\alpha/Y}(d)[2d+1]\row\whii_{\alpha/Y}\row
\wt{M}_{\alpha/Y}[1]\row\whii_{\alpha/Y}(d)[2d+2],
\end{equation}
where $d=2^{n-1}-1$. The above diagram (\ref{eromo}) is self-dual with respect to $\Homi( -, T(d)[2d])$.

\begin{prop}
 \label{reromo-decomp}
 In $\dmgmEF{Y}{\ff_2}$ we have:
 $\wt{M}_{\alpha/Y}\otimes\wt{M}_{\alpha/Y}=\wt{M}_{\alpha/Y}[-1]\oplus\wt{M}_{\alpha/Y}(d)[2d+1]$.
\end{prop}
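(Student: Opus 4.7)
The plan is to tensor the defining triangle (\ref{reromo-rCech}) of $\wt{M}_{\alpha/Y}$ by $\wt{M}_{\alpha/Y}$ itself and show that the resulting triangle splits. The relative analogue of the identity noted after (\ref{romo}) gives $\wt{M}_{\alpha/Y}\otimes\whii_{\alpha/Y}=\wt{M}_{\alpha/Y}$ (indeed, $\wt{M}_{\alpha/Y}$ is produced by precisely this tensoring operation on the octahedron (\ref{eromo})). Tensoring (\ref{reromo-rCech}) with $\wt{M}_{\alpha/Y}$ therefore yields a distinguished triangle
\[
\wt{M}_{\alpha/Y}(d)[2d+1]\xrightarrow{\delta'}\wt{M}_{\alpha/Y}\longrightarrow\wt{M}_{\alpha/Y}\otimes\wt{M}_{\alpha/Y}[1]\longrightarrow\wt{M}_{\alpha/Y}(d)[2d+2],
\]
where $\delta'=\delta\otimes id_{\wt{M}_{\alpha/Y}}$ and $\delta:\whii_{\alpha/Y}(d)[2d+1]\to\whii_{\alpha/Y}$ is the connecting map of (\ref{reromo-rCech}). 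The proposition follows if $\delta'=0$, since in that case the triangle splits as $\wt{M}_{\alpha/Y}\otimes\wt{M}_{\alpha/Y}\cong\wt{M}_{\alpha/Y}[-1]\oplus\wt{M}_{\alpha/Y}(d)[2d+1]$.

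To prove that $\delta'$ vanishes, I would compute the ambient group $\Hom_{\dmgmEF{Y}{\ff_2}}(\wt{M}_{\alpha/Y}(d)[2d+1],\wt{M}_{\alpha/Y})$. Applying $\Hom(-,\wt{M}_{\alpha/Y})$ to (\ref{reromo-rCech}) on the source sandwiches this group between $\Hom$ groups of the form $\Hom(\whii_{\alpha/Y}(*)[**],\wt{M}_{\alpha/Y})$; a second application of (\ref{reromo-rCech}) on the target reduces these further to $\Hom$ groups between shifted and twisted copies of $\whii_{\alpha/Y}$, i.e.\ to motivic cohomology of the reduced \v{C}ech simplicial scheme. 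By the bidegree analysis of this motivic cohomology following \cite{OVV} (the same input used in the proof of Proposition \ref{triv-erRm}, namely that the nonzero classes on $\whii_{\alpha/Y}$ lie in a restricted bidegree range controlled by the symbol $\alpha$ modulo powers of $\tau$), the obstruction group containing $\delta'$ must vanish.

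The main obstacle is the careful bookkeeping of these bidegrees in the relative setting over $Y$. A cleaner alternative is to reduce to the field case by pulling back via $(\op{Spec}(E)\to Y)^*:\dmgmEF{Y}{\ff_2}\to\dmgmEF{E}{\ff_2}$: over a field, the decomposition $\wt{M}_\alpha\otimes\wt{M}_\alpha\cong\wt{M}_\alpha[-1]\oplus\wt{M}_\alpha(d)[2d+1]$ is classical, and the resulting splitting can then be lifted back to $Y$ using the canonicity (the unique lifting of the triangle (\ref{eromo}) from the generic point) that underlies the construction of $\wt{M}_{\alpha/Y}$ discussed earlier in the section.
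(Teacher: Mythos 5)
Your main route is, up to the point where the real work starts, the same as the paper's: tensoring (\ref{reromo-rCech}) with $\wt{M}_{\alpha/Y}$ (legitimate, since $\wt{M}_{\alpha/Y}$ is a $\whii_{\alpha/Y}$-module) gives exactly the triangle the paper uses, and the statement reduces to the vanishing of the connecting map, i.e.\ of a class in $\Hom(\wt{M}_{\alpha/Y}(d)[2d+1],\wt{M}_{\alpha/Y})$, which both you and the paper attack by resolving the source via (\ref{reromo-rCech}). The gap is that you never actually establish the vanishing. Your plan is to push the flanking groups further down to motivic cohomology of $\whii_{\alpha/Y}$ and then invoke OVV-type bidegree restrictions; but those computations are field-level statements, and you yourself flag the ``bookkeeping in the relative setting over $Y$'' as the main obstacle --- that is precisely the step that needs an argument, not a remark. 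The paper sidesteps the cohomology of the \v{C}ech object entirely: using the self-duality $(\wt{M}_{\alpha/Y})^{\vee}=\wt{M}_{\alpha/Y}(-d)[-2d]$ it rewrites the two flanking groups $\Hom(\whii_{\alpha/Y}(2d)[4d+2],\wt{M}_{\alpha/Y})$ and $\Hom(\whii_{\alpha/Y}(d)[2d],\wt{M}_{\alpha/Y})$ as $\Hom(\wt{M}_{\alpha/Y},T(-d)[-2d-2])$ and $\Hom(\wt{M}_{\alpha/Y},T)$, and then the octahedron (\ref{eromo}) together with the adjunction $(\pi_Y)_{\#}\dashv\pi_Y^*$ reduces everything to motivic cohomology of the smooth schemes $Y$ and $Q_{\alpha}$: negative weights/degrees kill the first group, and the second is identified with $\op{coker}(\moco{0}{0}(Y;\ff_2)\to\moco{0}{0}(Q_{\alpha};\ff_2))=0$. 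Some argument of this kind (only the weight-zero part requires care) has to be supplied; citing OVV does not by itself handle the base $Y$.

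The fallback you offer does not close the gap. Beyond the fact that the field-case splitting is asserted as ``classical'' without justification, the transfer back to $Y$ fails: if the connecting map vanishes after restriction to $\op{Spec}(E)$, then, since Homs at the generic point are the filtered colimit of Homs over shrinking open neighbourhoods, you only conclude that it vanishes over some open $U\subset Y$, giving the decomposition over $U$ rather than over the given $Y$; and you cannot appeal to independence of the ideal on the neighbourhood (Corollary \ref{ideal-indep-of-Y}), because that is proved later and depends on the present proposition. The ``canonicity'' coming from $\moco{-2d-1}{-d}(Y;\ff_2)=0$ makes the object $\wt{M}_{\alpha/Y}$ canonical, but it does not make restriction to the generic point injective on the relevant Hom group, so it cannot be used to lift the splitting from the generic fibre to $Y$.
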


\begin{proof} Since $\whii_{\alpha/Y}$ is a tensor projector in $\dmgmEF{Y}{\ff_2}$, we get an exact triangle:
$$
\wt{M}_{\alpha/Y}(d)[2d]\stackrel{u}{\row}\wt{M}_{\alpha/Y}[-1]\row
\wt{M}_{\alpha/Y}\otimes\wt{M}_{\alpha/Y}\row\wt{M}_{\alpha/Y}(d)[2d+1].
$$
Here $u\in\Hom(\wt{M}_{\alpha/Y}(d)[2d+1],\wt{M}_{\alpha/Y})$ which sits in an exact sequence between the groups $\Hom(\whii_{\alpha/Y}(2d)[4d+2],\wt{M}_{\alpha/Y})$ and $\Hom(\whii_{\alpha/Y}(d)[2d],\wt{M}_{\alpha/Y})$ (all 
$\Hom$s are in $\dmEF{Y}{\ff_2}$). 
Since $(\wt{M}_{\alpha/Y})^{\vee}=\wt{M}_{\alpha/Y}(-d)[-2d]$, 
and there are no $\Hom$s from $\hii_{\alpha/Y}$ (shifted) to $\wt{M}_{\alpha/Y}$,
we may identify:
$\Hom(\whii_{\alpha/Y}(2d)[4d+2],\wt{M}_{\alpha/Y})$ with $\Hom(T(2d)[4d+2],\wt{M}_{\alpha/Y})$ and, finally, with
$\Hom(\wt{M}_{\alpha/Y},T(-d)[-2d-2])$.
From the diagram (\ref{eromo}) and the adjoint pair
(\ref{adj-funct-Yk}), we see that this group is zero,
since smooth varieties have no motivic cohomology (with $\ff_2$-coefficients) in negative round, or square degrees. Similarly, we see that:
$\Hom(\whii_{\alpha/Y}(d)[2d],\wt{M}_{\alpha/Y})=
\Hom(T(d)[2d],\wt{M}_{\alpha/Y})=
\Hom(\wt{M}_{\alpha/Y},T)$, and since
$\moco{-2d-1}{-d}(Y;\ff_2)=0$ and $\moco{1}{0}(Y;\ff_2)=0$,
the latter group may be identified with the group
$\op{coker}(\moco{0}{0}(Y;\ff_2)\row\moco{0}{0}(Q_{\alpha};\ff_2))$ which is zero. Thus, $u=0$ and so,
$\wt{M}_{\alpha/Y}\otimes\wt{M}_{\alpha/Y}$ splits into the direct sum as claimed.
 \Qed
\end{proof}

From this we immediately get:

\begin{cor}
 The thick subcategory of $\dmgmEF{Y}{\ff_2}$ generated by the Tate-twists of $\wt{M}_{\alpha/Y}$ coincides with the thick subcategory generated by the Tate twists of $\wt{M}_{\alpha/Y}\otimes (\wt{M}_{\alpha/Y})^{\vee}$.
 The thick tensor ideals of $\dmgmEF{Y}{\ff_2}$ generated by $\wt{M}_{\alpha/Y}$ and $\wt{M}_{\alpha/Y}\otimes (\wt{M}_{\alpha/Y})^{\vee}$ coincide.
\end{cor}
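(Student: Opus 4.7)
The plan is to deduce both equalities directly from Proposition \ref{reromo-decomp} combined with the self-duality identity $(\wt{M}_{\alpha/Y})^{\vee}=\wt{M}_{\alpha/Y}(-d)[-2d]$ already used in its proof. Writing $N=\wt{M}_{\alpha/Y}$ for brevity, I would first combine the two identities into an explicit computation
\[
N\otimes N^{\vee}=(N\otimes N)(-d)[-2d]=N(-d)[-2d-1]\oplus N[1],
\]
which exhibits $N\otimes N^{\vee}$ as the direct sum of a Tate twist of a shift of $N$ and a shift of $N$.

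From this I read off one inclusion in both statements at once: every Tate twist of $N\otimes N^{\vee}$ is a sum of Tate twists and shifts of $N$, so it lies in the thick subcategory generated by the Tate twists of $N$; and the thick tensor ideal generated by $N\otimes N^{\vee}$ is contained in the one generated by $N$ (this direction is in any case immediate from the tensor-ideal property).

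For the reverse inclusions, I would apply Proposition \ref{reromo-decomp} in the form $N\otimes N=N[-1]\oplus N(d)[2d+1]$, and then use the identity $N\otimes N=(N\otimes N^{\vee})(d)[2d]$ to conclude that $N$ (up to shift) is a direct summand of the Tate twist $(N\otimes N^{\vee})(d)[2d+1]$. This places $N$ in the thick subcategory generated by the Tate twists of $N\otimes N^{\vee}$, proving the first equality, and places $N$ in the thick tensor ideal generated by $N\otimes N^{\vee}$, proving the second.

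I do not foresee any genuine obstacle: the corollary is pure bookkeeping on top of Proposition \ref{reromo-decomp}. The only thing that needs a bit of care is to keep the two parallel formulations (thick subcategory closed under Tate twists versus thick tensor ideal) distinct and to check that the same direct-summand chain, together with the self-duality shift by $(-d)[-2d]$, serves both of them.
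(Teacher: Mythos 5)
Your argument is correct and is essentially the paper's own: the corollary is stated with ``From this we immediately get,'' i.e.\ the intended proof is exactly the bookkeeping you carry out, combining the decomposition of Proposition \ref{reromo-decomp} with the self-duality $(\wt{M}_{\alpha/Y})^{\vee}=\wt{M}_{\alpha/Y}(-d)[-2d]$ recorded in its proof. Your explicit computation $N\otimes N^{\vee}=N(-d)[-2d-1]\oplus N[1]$ and the converse direct-summand observation give precisely the two inclusions needed for both formulations.
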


The advantage of $\wt{M}_{\alpha/Y}\otimes (\wt{M}_{\alpha/Y})^{\vee}$ in comparison to $\wt{M}_{\alpha/Y}$
is that it is a {\it co-algebra}.
This allows to show that the corresponding ideals are respected by the pull-backs.

Let $f:Z\row Y$ be a morphism of smooth varieties. Then
$f^*(\wt{M}_{\alpha/Y})=\wt{M}_{\beta/Z}$, where
$\beta=f^*(\alpha)$.
Let
$M_{\alpha,Y}=(\pi_Y)_{\#}(M_{\alpha/Y})$ and
$M_{\beta,Z}=(\pi_Z)_{\#}(M_{\beta/Z})$
be the respective
{\it extended reduced Rost motives} in $\dmgmkF{\ff_2}$.  

\begin{prop}
 \label{pullback-alpha}
 In the above situation, $\wt{M}_{\beta,Z}\in\la\wt{M}_{\alpha,Y}\ra$.
\end{prop}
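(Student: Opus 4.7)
The plan is to apply Proposition \ref{coalg-pullback} to the self-dual tensor product $\bar{A}:=\wt{M}_{\alpha/Y}\otimes(\wt{M}_{\alpha/Y})^{\vee}$, which is a co-algebra by the example preceding Proposition \ref{coalg-pullback}. Since $f^{*}$ is tensor triangulated and preserves duals of compact objects, and since we are told $f^{*}(\wt{M}_{\alpha/Y})=\wt{M}_{\beta/Z}$, we obtain
$$
\bar{B}:=f^{*}(\bar{A})=\wt{M}_{\beta/Z}\otimes(\wt{M}_{\beta/Z})^{\vee}.
$$
Proposition \ref{coalg-pullback} then yields $(\pi_{Z})_{\#}(\bar{B})\in\la (\pi_{Y})_{\#}(\bar{A})\ra$ in $\dmgmkF{\ff_{2}}$.

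Next I would rewrite $(\pi_{Y})_{\#}(\bar{A})$ and $(\pi_{Z})_{\#}(\bar{B})$ in terms of the extended reduced Rost motives themselves. Using the identification $(\wt{M}_{\alpha/Y})^{\vee}=\wt{M}_{\alpha/Y}(-d)[-2d]$ together with Proposition \ref{reromo-decomp}, we get a genuine direct sum decomposition
$$
\bar{A}=\wt{M}_{\alpha/Y}(-d)[-2d-1]\,\oplus\,\wt{M}_{\alpha/Y}[1],
$$
and likewise $\bar{B}=\wt{M}_{\beta/Z}(-d)[-2d-1]\oplus\wt{M}_{\beta/Z}[1]$. Since $(\pi_{Y})_{\#}$ and $(\pi_{Z})_{\#}$ commute with direct sums, shifts and Tate twists, these decompositions descend to
$$
(\pi_{Y})_{\#}(\bar{A})=\wt{M}_{\alpha,Y}(-d)[-2d-1]\oplus\wt{M}_{\alpha,Y}[1],\qquad (\pi_{Z})_{\#}(\bar{B})=\wt{M}_{\beta,Z}(-d)[-2d-1]\oplus\wt{M}_{\beta,Z}[1].
$$
In particular, $(\pi_{Y})_{\#}(\bar{A})$ lies in $\la\wt{M}_{\alpha,Y}\ra$ (it is a direct sum of Tate-shifts of $\wt{M}_{\alpha,Y}$) and $\wt{M}_{\beta,Z}$ is a direct summand of $(\pi_{Z})_{\#}(\bar{B})$.

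Combining these two steps: $\wt{M}_{\beta,Z}$ is a direct summand of $(\pi_{Z})_{\#}(\bar{B})\in\la (\pi_{Y})_{\#}(\bar{A})\ra\subset\la\wt{M}_{\alpha,Y}\ra$, which is the desired containment. The only point that requires care is verifying that $f^{*}$ sends the co-algebra structure on $\bar{A}$ (i.e.\ the unit/comultiplication coming from the standard example $M\otimes M^{\vee}$) to the analogous co-algebra structure on $\bar{B}$, but this is automatic from the naturality of $\nu$ and $\nu^{\vee}$ and the fact that $f^{*}$ is tensor triangulated and preserves duals of compact objects. No serious obstacle arises; the main conceptual step is simply recognising that replacing $\wt{M}_{\alpha/Y}$ by its self-dual tensor product $\bar{A}$ both furnishes the co-algebra needed to invoke Proposition \ref{coalg-pullback} and, by Proposition \ref{reromo-decomp}, costs nothing at the level of thick tensor ideals.
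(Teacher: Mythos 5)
Your proposal is correct and follows essentially the same route as the paper: pass to the co-algebra $\wt{M}_{\alpha/Y}\otimes(\wt{M}_{\alpha/Y})^{\vee}$, apply Proposition \ref{coalg-pullback} to the pullback along $f$, and use Proposition \ref{reromo-decomp} (with the self-duality $(\wt{M}_{\alpha/Y})^{\vee}=\wt{M}_{\alpha/Y}(-d)[-2d]$) to identify the thick tensor ideals generated by $\wt{M}_{\alpha,Y}$ and by $(\pi_Y)_{\#}(\wt{M}_{\alpha/Y}\otimes(\wt{M}_{\alpha/Y})^{\vee})$, and likewise to extract $\wt{M}_{\beta,Z}$ as a summand. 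Your version merely makes explicit the direct-sum decompositions and the compatibility of $f^*$ with duals and the co-algebra structure, which the paper leaves implicit.
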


\begin{proof}
 By Proposition \ref{reromo-decomp}, 
 $\la\wt{M}_{\alpha,Y}\ra=\la (\pi_Y)_{\#}(\wt{M}_{\alpha/Y}\otimes (\wt{M}_{\alpha/Y})^{\vee})\ra$. But 
 $\wt{M}_{\alpha/Y}\otimes (\wt{M}_{\alpha/Y})^{\vee}$ is a co-algebra. Hence, by Proposition \ref{coalg-pullback},
 $(\pi_Z)_{\#}(\wt{M}_{\beta/Z}\otimes (\wt{M}_{\beta/Z})^{\vee})=(\pi_Z)_{\#}f^*(\wt{M}_{\alpha/Y}\otimes (\wt{M}_{\alpha/Y})^{\vee})$ belongs to this ideal.
 By Proposition \ref{reromo-decomp}, so does
 $\wt{M}_{\beta,Z}=(\pi_Z)_{\#}(\wt{M}_{\beta/Z})$.
 \Qed
\end{proof}

Similarly, the co-algebra technique permits to see that our ideals are respected by the multiplication action by pure symbols.

\begin{prop}
 \label{divisible-Mt}
 Let $\alpha,\beta\in K^M_*(E)/2$ be pure symbols, such that $\alpha$ divides $\beta$ and both are unramified in the smooth neighbourhood $Y$ of $\op{Spec}(E)$. Then
 $\wt{M}_{\beta,Y}\in\la\wt{M}_{\alpha,Y}\ra$.
\end{prop}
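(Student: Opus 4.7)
The plan is to translate the divisibility $\alpha\mid\beta$ into a stability property of $\wt{M}_{\beta/Y}$ under the $\whii_{\alpha/Y}$-projector, and then exploit the defining triangle of $\wt{M}_{\alpha/Y}$ together with the co-algebra machinery behind Propositions \ref{coalg-pullback} and \ref{pullback-alpha}.

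First, writing $\beta=\alpha\cdot\gamma$ for a pure symbol $\gamma$, one observes at the level of \v{C}ech simplicial schemes that whenever $\alpha_L=0$ over a field extension $L$, necessarily $\beta_L=0$; hence $Q_\beta$ acquires an $L$-point whenever $Q_\alpha$ does. This yields a natural morphism $\hii_{\alpha/Y}\to\hii_{\beta/Y}$ and the identity $\hii_{\alpha/Y}\otimes\whii_{\beta/Y}=0$ in $\dmEF{Y}{\ff_2}$. Equivalently $\whii_{\alpha/Y}\otimes\whii_{\beta/Y}=\whii_{\beta/Y}$, so since $\wt{M}_{\beta/Y}$ is already fixed by $\whii_{\beta/Y}$, it is also fixed by $\whii_{\alpha/Y}$: one has $\wt{M}_{\beta/Y}\otimes\whii_{\alpha/Y}=\wt{M}_{\beta/Y}$.

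Next, tensoring the defining triangle $\whii_{\alpha/Y}(d_\alpha)[2d_\alpha+1]\to\whii_{\alpha/Y}\to\wt{M}_{\alpha/Y}[1]$ with $\wt{M}_{\beta/Y}$ and using this stability yields the exact triangle
$$
\wt{M}_{\beta/Y}(d_\alpha)[2d_\alpha+1]\lrow\wt{M}_{\beta/Y}\lrow\wt{M}_{\alpha/Y}\otimes\wt{M}_{\beta/Y}[1]
$$
in $\dmgmEF{Y}{\ff_2}$. Pushing down via $(\pi_Y)_\#$ gives a triangle in $\dmgmkF{\ff_2}$ whose outer terms are Tate twists of $\wt{M}_{\beta,Y}$ and whose third term is $(\pi_Y)_\#(\wt{M}_{\alpha/Y}\otimes\wt{M}_{\beta/Y})[1]$.

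The remaining, and most delicate, step, which I regard as the main obstacle, is to show that $(\pi_Y)_\#(\wt{M}_{\alpha/Y}\otimes\wt{M}_{\beta/Y})$ lies in $\la\wt{M}_{\alpha,Y}\ra$. Following the strategy of the proof of Proposition \ref{pullback-alpha}, I would equip $\wt{M}_{\alpha/Y}\otimes\wt{M}_{\alpha/Y}^\vee\otimes\wt{M}_{\beta/Y}$ with the co-algebra structure inherited from its $\wt{M}_{\alpha/Y}\otimes\wt{M}_{\alpha/Y}^\vee$ factor, and run the diagram from the proof of Proposition \ref{coalg-pullback} with $\wt{M}_{\beta/Y}$ carried as a passenger, exhibiting $(\pi_Y)_\#(\wt{M}_{\alpha/Y}\otimes\wt{M}_{\beta/Y})$ as a direct summand of a product involving $\wt{M}_{\alpha,Y}$. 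Granted this, the pushed-down triangle gives the Tate-periodicity $\wt{M}_{\beta,Y}\simeq\wt{M}_{\beta,Y}(d_\alpha)[2d_\alpha+1]$ modulo $\la\wt{M}_{\alpha,Y}\ra$, and the compactness of $\wt{M}_{\beta,Y}$ combined with the bounded Tate-support of its motivic cohomology (visible from Proposition \ref{reromo-decomp} and the Rost cycle module description of its homology) should force $\wt{M}_{\beta,Y}\in\la\wt{M}_{\alpha,Y}\ra$. Care is needed at both ends: building the co-algebra retraction honestly, and promoting Tate-periodicity in a Verdier quotient to honest vanishing.
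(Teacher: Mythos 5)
Your outline is, in substance, the paper's own proof: the same use of $\alpha\,|\,\beta$ to get $\whii_{\alpha/Y}\otimes\wt{M}_{\beta/Y}$-stability and the resulting triangle, the same reduction to a nilpotence statement for the connecting map, and the same co-algebra mechanism to force $(\pi_Y)_{\#}(\wt{M}_{\alpha/Y}\otimes\wt{M}_{\beta/Y})$ into $\la\wt{M}_{\alpha,Y}\ra$. The one genuine difference is how the co-algebra trick is deployed: the paper does not prove a ``comodule with a passenger'' version of Proposition \ref{coalg-pullback}; it instead forms the external product $C_{\alpha}\boxtimes C_{\beta}$ (with $C_{\gamma}=\wt{M}_{\gamma/Y}\otimes(\wt{M}_{\gamma/Y})^{\vee}$) over $Y\times Y$, pulls it back along the diagonal $Y\row Y\times Y$, and applies Proposition \ref{coalg-pullback} verbatim, using Proposition \ref{reromo-decomp} to identify the two pushforwards with sums of shifted twists of $\wt{M}_{\alpha,Y}\otimes\wt{M}_{\beta,Y}$ and of $(\pi_Y)_{\#}(\wt{M}_{\alpha/Y}\otimes\wt{M}_{\beta/Y})$. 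Your passenger variant does work and is a little more economical: the retraction $(\pi_Y)_{\#}(C_{\alpha}\otimes\wt{M}_{\beta/Y})\row (\pi_Y)_{\#}(C_{\alpha})\otimes(\pi_Y)_{\#}(C_{\alpha}\otimes\wt{M}_{\beta/Y})\row(\pi_Y)_{\#}(C_{\alpha}\otimes\wt{M}_{\beta/Y})$ is built from the unit of $(\pi_Y)_{\#}\dashv\pi_Y^*$, the projection formula and the co-unit of $C_{\alpha}$, exactly as in the displayed diagram of Proposition \ref{coalg-pullback} -- but you would need to write that diagram out, since it is not literally the statement you cite.

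On the step you flag as delicate: the paper resolves it by proving nilpotence of the connecting map in the honest category, not by an argument in the quotient. Namely, $v^r$ lies in $\Hom(C_{\beta},T(-rd_{\alpha})[-2rd_{\alpha}-r])$, and $(\pi_Y)_{\#}C_{\beta}$ is compact and vanishes in the \'etale realisation, so by Beilinson--Lichtenbaum (Proposition \ref{et-comp-fin-diag}) it has only finitely many non-zero diagonals; hence $v$ is nilpotent and $\wt{M}_{\beta/Y}$ is a direct summand of an iterated extension of twists of $\wt{M}_{\alpha/Y}\otimes\wt{M}_{\beta/Y}$, already over $Y$. Your formulation via ``Tate-periodicity modulo $\la\wt{M}_{\alpha,Y}\ra$'' is not sufficient as stated: an object can be non-zero and Tate-periodic in a Verdier quotient (\'etale-locally everything is $\tau$-periodic), so what is needed is precisely the nilpotence of the specific map inducing the periodicity. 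Also, the bounded-diagonal input does not come from Proposition \ref{reromo-decomp}; the correct source is \'etale triviality of $\wt{M}_{\beta,Y}$ (or of $C_{\beta}$) together with Proposition \ref{et-comp-fin-diag}. With those two adjustments your argument closes and agrees with the paper's.
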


\begin{proof}
 Let $\ddeg(\alpha)=m$, $d_{\alpha}\hspace{-0.8mm}=\hspace{-0.8mm}2^{m-1}-1$. 
 Let $C_{\alpha}=\wt{M}_{\alpha/Y}\otimes (\wt{M}_{\alpha/Y})^{\vee}$ and $C_{\beta}=\wt{M}_{\beta/Y}\otimes (\wt{M}_{\beta/Y})^{\vee}$ be the ``endomorphism co-algebras'' of
our objects in $\dmgmEF{Y}{\ff_2}$.
 Since $\alpha|\beta$,
 there is a morphism $Q_{\alpha}\row Q_{\beta}$ over $Y$,
 which shows that in $\dmgmEF{Y}{\ff_2}$,
 $\whii_{\beta/Y}\otimes\whii_{\alpha/Y}=\whii_{\beta/Y}$ and so, $\wt{M}_{\beta/Y}\otimes\whii_{\alpha/Y}=\wt{M}_{\beta/Y}$ . 
 From (\ref{reromo-rCech}) we get the exact triangle
 $$
\wt{M}_{\beta/Y}(d_{\alpha})[2d_{\alpha}]\stackrel{v}{\row}\wt{M}_{\beta/Y}[-1]\row
\wt{M}_{\alpha/Y}\otimes\wt{M}_{\beta/Y}\row\wt{M}_{\beta/Y}(d_{\alpha})[2d_{\alpha}+1].
$$
We have: $v^r\in\Hom(\wt{M}_{\beta/Y}(rd_{\alpha})[2rd_{\alpha}+r],\wt{M}_{\beta/Y})=
\Hom(C_{\beta},T(-rd_{\alpha})[-2rd_{\alpha}-r])$. 
The object $(\pi_Y)_{\#}C_{\beta}$ vanishes in the etale
realisation and is compact. Thus, by the Beilinson-Lichtenbaum conjecture it has only finitely many non-zero
diagonals in motivic cohomology. This shows that the morphism $v$ is nilpotent (cf. Proposition \ref{et-comp-fin-diag})
and so, $\wt{M}_{\beta/Y}$ belongs to the thick ideal of $\dmgmEF{Y}{\ff_2}$ generated by $\wt{M}_{\alpha/Y}\otimes\wt{M}_{\beta/Y}$.
Hence, $\wt{M}_{\beta,Y}$ belongs to the thick ideal
of $\dmgmkF{\ff_2}$ generated by
$(\pi_Y)_{\#}(\wt{M}_{\alpha/Y}\otimes\wt{M}_{\beta/Y})$.

Let $C_{\alpha}\boxtimes C_{\beta}\in\dmgmEF{Y\times Y}{\ff_2}$ be the external tensor product. It is a co-algebra in the latter category. By Proposition \ref{reromo-decomp},
$(\pi_{Y\times Y})_{\#}(C_{\alpha}\boxtimes C_{\beta})$ is a direct sum of shifted copies of
$\wt{M}_{\alpha,Y}\otimes\wt{M}_{\beta,Y}$
and $(\pi_Y)_{\#}(C_{\alpha}\otimes C_{\beta})$ is a direct sum of shifted copies of 
$(\pi_Y)_{\#}(\wt{M}_{\alpha/Y}\otimes\wt{M}_{\beta/Y})$. 
Let $f:Y\row Y\times Y$ be the diagonal embedding.
Then $f^*(C_{\alpha}\boxtimes C_{\beta})=C_{\alpha}\otimes C_{\beta}\in\dmgmEF{Y}{\ff_2}$.
By Proposition \ref{coalg-pullback}, $(\pi_Y)_{\#}(C_{\alpha}\otimes C_{\beta})\in
\la (\pi_{Y\times Y})_{\#}(C_{\alpha}\boxtimes C_{\beta})\ra$. Hence, $\wt{M}_{\beta,Y}\in\la\wt{M}_{\alpha,Y}\otimes\wt{M}_{\beta,Y}\ra\subset\la\wt{M}_{\alpha,Y}\ra$.
 \Qed
\end{proof}

Since we already have pull-backs at our disposal, in order to establish independence of our
ideal $\la\wt{M}_{\alpha,Y}\ra$
of the smooth $k$-model $Y$ of $\op{Spec}(E)$, it remains to prove the following statement allowing to pass from an open subvariety to the whole variety.

\begin{prop}
 \label{from-U-to-Y}
 Let $\alpha\in K^M_*(E)/2$ be a pure symbol and $Y$ be a smooth neighbourhood of $\op{Spec}(E)$, where $\alpha$ is unramified. Let $U\subset Y$ be a non-empty open subvariety.
 Then $\wt{M}_{\alpha,Y}\in\la\wt{M}_{\alpha,U}\ra$.
\end{prop}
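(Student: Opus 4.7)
The strategy is to establish the reverse of the easy inclusion $\wt{M}_{\alpha,U}\in\la\wt{M}_{\alpha,Y}\ra$, which follows directly from Proposition \ref{pullback-alpha} applied to the open immersion $j:U\hookrightarrow Y$, so that in fact the two motives generate the same thick tensor ideal (this will give Corollary \ref{ideal-indep-of-Y}). I proceed by Noetherian induction on $\dim(Y\setminus U)$, the base case $U=Y$ being trivial. Since $k$ has characteristic zero, any closed subscheme of $Y$ admits a stratification by smooth locally closed pieces, which allows reduction to the case where $Z:=Y\setminus U$ is smooth of pure codimension $c\geq 1$ in $Y$.

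In this reduced situation, I invoke the motivic Gysin/localization triangle for the smooth pair $(Y,Z)$ applied to $\wt{M}_{\alpha/Y}\in\dmgmEF{Y}{\ff_2}$. Pushing down via $(\pi_Y)_{\#}$ yields a distinguished triangle in $\dmgmkF{\ff_2}$
\[
\wt{M}_{\alpha,U}\longrightarrow\wt{M}_{\alpha,Y}\longrightarrow\wt{M}_{\alpha_Z,Z}(c)[2c]\longrightarrow\wt{M}_{\alpha,U}[1],
\]
where $\alpha_Z\in K^M_*(k(Z))/2$ is the pure symbol obtained by restricting $\alpha$ through the regular local ring $\co_{Y,\eta_Z}$ (well defined by the unramifiedness of $\alpha$ on $Y$). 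The task thus reduces to placing $\wt{M}_{\alpha_Z,Z}(c)[2c]$ in $\la\wt{M}_{\alpha,U}\ra$.

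For this, the plan is to combine a local analysis of the pair $(Y,Z)$ near the generic point of $Z$ with Proposition \ref{pullback-alpha}. Concretely, shrink $Z$ to a Zariski open $Z'\subset Z$ over which $Y$ admits a ``tubular'' neighborhood $V\subset Y$ with $V\cong Z'\times\aaa^c$ and $Z'$ identified with the zero section, chosen compatibly so that the restriction of $\alpha$ to $V\setminus Z'\subset U$ agrees with the pullback of $\alpha_{Z'}$ along the projection $V\setminus Z'\to Z'$. Proposition \ref{pullback-alpha} applied to the open embedding $V\setminus Z'\hookrightarrow U$ then puts $\wt{M}_{\alpha,V\setminus Z'}$ into $\la\wt{M}_{\alpha,U}\ra$, and the decomposition $M(\aaa^c\setminus 0)\cong\mathbbm{1}\oplus\mathbbm{1}(c)[2c-1]$ together with the projection formula for $V\setminus Z'\to Z'$ produces
\[
\wt{M}_{\alpha,V\setminus Z'}\;\cong\;\wt{M}_{\alpha_{Z'},Z'}\oplus\wt{M}_{\alpha_{Z'},Z'}(c)[2c-1].
\]
In particular $\wt{M}_{\alpha_{Z'},Z'}(c)[2c]\in\la\wt{M}_{\alpha,U}\ra$ (after a cohomological shift). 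Applying the present proposition inductively inside $Z$, whose dimension is strictly smaller than that of $Y$, upgrades $Z'$ back to $Z$, and the Gysin triangle above finishes the induction.

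The main obstacle is the combined local step just described: both the Zariski-local product trivialization $V\cong Z'\times\aaa^c$ (rather than merely étale-local) and the compatibility between $\alpha|_{V\setminus Z'}$ and the pullback of $\alpha_{Z'}$ (rather than mere agreement modulo terms of positive order along $Z'$) are non-trivial and typically fail without further work. I expect the proof either addresses these by Nisnevich descent combined with a Mayer--Vietoris argument for the thick tensor ideal, or sidesteps them globally by deformation to the normal cone of $Z$ in $Y$, which would directly identify $\wt{M}_{\alpha,U}$ with $\wt{M}_{p^{*}\alpha_Z,\,N_{Z/Y}\setminus Z}$, from which the required summand splits off via the same $\aaa^c\setminus 0$ decomposition.
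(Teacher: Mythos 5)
Your reduction steps are fine (the Gysin triangle after $(\pi_Y)_{\#}$, the induction on dimension, and the use of Proposition \ref{pullback-alpha} to pass from $Z'$ back to $Z$ all match the skeleton of the paper's argument), but the proof has a genuine gap exactly at the point you flag yourself: the ``combined local step''. A smooth divisor (or center of codimension $c$) does not in general admit a Zariski-local tubular neighbourhood $V\cong Z'\times\aaa^c$, and even \'etale- or Nisnevich-locally the crucial compatibility $\alpha|_{V\setminus Z'}=pr^*(\alpha_{Z'})$ is precisely what has to be proved, not assumed. Your two suggested repairs are not carried out and neither is how the paper proceeds: membership in a thick tensor ideal of $\dmgmkF{\ff_2}$ is not an obviously Nisnevich-local property (so ``Nisnevich descent plus Mayer--Vietoris for the ideal'' is not available as stated), and deformation to the normal cone would require controlling how the extended reduced Rost motive, i.e.\ the symbol datum, degenerates, which is again the same difficulty in disguise.

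The paper fills this hole in two steps. First, a Henselization argument (Lemma \ref{Henz-Kk}: in characteristic zero the Henselization $R^h$ of the DVR ${\cal O}_{Y,Z}$ admits a section $\kappa\row R^h$, and unramified classes in $K^M_*(K^h)/2$ come from $K^M_*(\kappa)/2$) is spread out to a Nisnevich neighbourhood $a:A\row Y$ of the generic point of $Z$, with $V=Z\cap A$, $W=U\times_Y A$, and a retraction $f:A\row V$ satisfying $f^*(\alpha_V)=a^*(\alpha)$; this is the rigorous substitute for your product trivialization together with the symbol compatibility. Second --- and this is the idea missing from your proposal --- the passage from the Nisnevich neighbourhood back to $V$ is done not by any descent but by a transfer argument: the generic fiber of $f_W:W\row V$ carries a zero-cycle of degree $1$ (since $A_\eta$ has a rational point), so after shrinking $V$ one finds finite \'etale multisections $V_1,V_2\row W$ of consecutive degrees $m$ and $m-1$; since $\wt{M}_{\alpha/V}\row (f_i)_{\#}f_i^*(\wt{M}_{\alpha/V})\row\wt{M}_{\alpha/V}$ is multiplication by $\ddeg(f_i)$, the difference of the two transfer maps splits $\wt{M}_{\alpha/V}$ off from $(f_W)_{\#}(\wt{M}_{f_W^*(\alpha)/W})$, whence $\wt{M}_{\alpha,V}\in\la\wt{M}_{f_W^*(\alpha),W}\ra\subset\la\wt{M}_{\alpha,U}\ra$ by Proposition \ref{pullback-alpha}. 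Without this (or an equivalent mechanism) your argument only proves the statement under the unrealistic hypothesis that the tubular trivialization and symbol compatibility hold Zariski-locally, so as written it does not establish the proposition.
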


\begin{proof}
 Induction on the dimension of $Y$. For $\ddim(Y)=0$, there is nothing to prove, as $U=Y$, in this case.
 
 \noindent
 (step) By induction and Gysin triangles, it is sufficient to show that, for any point $x$ of $Y$ with the neighbourhood $X\subset\bar{x}$ of it, $\wt{M}_{\alpha_x,X}\in\la\wt{M}_{\alpha,U}\ra$. 
 It is clearly sufficient to prove it for generic points
 of divisors. So, we may assume that $U$ is a complement in $Y$ to a smooth divisor $Z$. By the induction on the
 dimension and Proposition \ref{pullback-alpha}, it is enough to show that, for some open subvariety $V$ of $Z$, $\wt{M}_{\alpha,V}\in\la\wt{M}_{\alpha,U}\ra$. Thus, we may safely replace $Y$ by any open neighbourhood of $\op{Spec}(k(Z))$ in it, i.e. the
 question is reduced to the respective DVR ${\cal O}_{Y,Z}$. Moreover, we may take any Nisnevich neighbourhood of the mentioned point, as it has the same residue field.
 This reduces it to the Henselization ${\cal O}^h_{Y,Z}$.
 
 \begin{lem}
  \label{Henz-Kk}
  Let $R$ be a DVR of finite type over a field $k$ of $char=0$, with the fraction field $K$ and the residue field $\kappa$. Let $R^h$ be its Henselization. Then there is an embedding
  $\kappa\row R^h$ splitting the projection $R^h\row\kappa$.
  The respective map $\kappa\row K^h$ identifies $K^M_*(\kappa)/2$ with the unramified elements in $K^M_*(K^h)/2$.
 \end{lem}

 \begin{proof}
 The first part is the standard application of the Hensel's Lemma, which also implies that any element of
 $1+{\frak m}\subset K^h$ is a square and so, proves the second statement.
  \Qed
 \end{proof}
 
 Lemma \ref{Henz-Kk} shows that there is a Nisnevich neighbourhood $a:A\row Y$ of $\op{Spec}(k(Z))$ in $Y$, with
 $V=Z\cap A$ and $W=U\times_Y A$, such that there is
 a map $f:A\row V$ splitting the inclusion $V\row A$,
 where $f^*(\alpha_V)=a^*(\alpha_Y)$ (note that since $\alpha$ is unramified on $Y$, it restricts canonically to any point of $Y$).
 By Proposition \ref{pullback-alpha}, 
 $\wt{M}_{f^*(\alpha),W}=\wt{M}_{a^*(\alpha),W}\in\la\wt{M}_{\alpha,U}\ra$. We have a closed-open pair
 $V\hookrightarrow A\hookleftarrow W$ of smooth $V$-varieties (the structure maps given by the restriction of $f$). Let $\eta=\op{Spec}(k(V))$ be the generic point of $V$, and $A_{\eta}$, $W_{\eta}$ be the generic fibers of $f$, respectively, $f_W$. Since $A_{\eta}$ has a rational point (the generic fiber of $V\row A$), its open subvariety $W_{\eta}$ has a zero-cycle of degree $1$. 
 Shrinking $V$, if needed, we get two $V$-subvarieties $V_1\row W$ and $V_2\row W$ of $f_W:W\row V$, where the composition $V_i\stackrel{f_i}{\row}V$ is etale of degrees $m$, respectively $m-1$, for some $m$. Then the composition of the natural maps:
 $$
 \wt{M}_{\alpha/V}\row(f_i)_*f_i^*(\wt{M}_{\alpha/V})=(f_i)_{\#}f_i^*(\wt{M}_{\alpha/V})\row\wt{M}_{\alpha/V}
 $$ 
 is the multiplication by $\ddeg(f_i)$.
 Hence, the difference of classes of the respective maps
 $\wt{M}_{\alpha/V}\row(f_i)_{\#}\wt{M}_{f_i^*(\alpha)/V_i}\row(f_W)_{\#}\wt{M}_{f_W^*(\alpha)/W}$ 
 gives the splitting $\wt{M}_{\alpha/V}\row (f_W)_{\#}(\wt{M}_{f_W^*(\alpha)/W})$ of the natural map $(f_W)_{\#}(\wt{M}_{f_W^*(\alpha)/W})\row\wt{M}_{\alpha/V}$. Since 
 $\wt{M}_{\alpha,V}=(\pi_V)_{\#}(\wt{M}_{\alpha/V})$ and
 $\wt{M}_{f_W^*(\alpha),W}=(\pi_W)_{\#}(\wt{M}_{f_W^*(\alpha)/W})=(\pi_V)_{\#}(f_W)_{\#}(\wt{M}_{f_W^*(\alpha)/W})$, we get that
 $\wt{M}_{\alpha,V}\in\la\wt{M}_{f_W^*(\alpha),W}\ra\subset\la\wt{M}_{\alpha,U}\ra$.
 The induction step and the statement are proven.
 \Qed
\end{proof}

\begin{rem}
This is a more complicated version of the statement claiming that, for a smooth connected $Y$ and an open $U\subset Y$, one has: $\la M(Y)\ra=\la M(U)\ra$. Actually, this ideal depends exactly on the $2$-equivalence class of the field extension $k(Y)/k$.
\end{rem}

Combining Propositions \ref{pullback-alpha} and \ref{from-U-to-Y} we get:

\begin{cor}
 \label{ideal-indep-of-Y}
 The thick tensor ideal $\la\wt{M}_{\alpha,Y}\ra$ doesn't depend on the choice of the smooth neighbourhood $Y$, but only on $\alpha$ itself.
\end{cor}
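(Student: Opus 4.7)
The plan is to compare any two smooth neighbourhoods $Y_1$, $Y_2$ of $\op{Spec}(E)$ (both places where $\alpha$ is unramified) by bridging them through a common third neighbourhood, and to extract the equality of ideals from the two one-sided containments provided by Propositions \ref{pullback-alpha} and \ref{from-U-to-Y}. Concretely, the inclusions $i_j:\op{Spec}(E)\row Y_j$ yield a map $(i_1,i_2):\op{Spec}(E)\row Y_1\times_k Y_2$, and I would take $U\subset Y_1\times_k Y_2$ to be any smooth open neighbourhood of its image. Since $\alpha$ pulls back to the same element at the generic point via either projection $p_j:U\row Y_j$, and is unramified on $U$ (shrink if needed), this makes $U$ a legitimate smooth neighbourhood of $\op{Spec}(E)$ for $\alpha$.

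The first (easy) containment: Proposition \ref{pullback-alpha} applied to $p_j:U\row Y_j$ gives
$\wt{M}_{\alpha,U}\in\la\wt{M}_{\alpha,Y_j}\ra$ for $j=1,2$.

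For the reverse containment, I would exploit the fact that the other embedding provides a generic section of $p_j$. Explicitly, $(\op{id},i_{3-j}):\op{Spec}(E)\row U$ is a section of $p_j$ over the generic point $\op{Spec}(E)\subset Y_j$. By spreading out (in characteristic zero this is standard), the section extends to a morphism $s:V\row U$ defined on some nonempty open $V\subset Y_j$ with $p_j\circ s$ equal to the inclusion $V\hookrightarrow Y_j$; in particular, $s^*(\alpha_U)=\alpha_V$. Proposition \ref{pullback-alpha} applied to $s$ then yields $\wt{M}_{\alpha,V}\in\la\wt{M}_{\alpha,U}\ra$, and Proposition \ref{from-U-to-Y} applied to $V\subset Y_j$ yields $\wt{M}_{\alpha,Y_j}\in\la\wt{M}_{\alpha,V}\ra$. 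Chaining these,
$$
\wt{M}_{\alpha,Y_j}\in\la\wt{M}_{\alpha,U}\ra\quad\text{for }j=1,2.
$$

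Combining the two containments, $\la\wt{M}_{\alpha,Y_1}\ra=\la\wt{M}_{\alpha,U}\ra=\la\wt{M}_{\alpha,Y_2}\ra$, which is the desired independence. The only real technical point is the spreading out of the generic section and the verification that the resulting pulled back symbol coincides with the restriction of $\alpha$; both are routine in characteristic zero, so I would not expect serious obstacles beyond bookkeeping — indeed, the heavy lifting (the pullback functoriality and the passage from an open subvariety to the whole smooth neighbourhood) has already been carried out in Propositions \ref{pullback-alpha} and \ref{from-U-to-Y}.
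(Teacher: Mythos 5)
Your overall strategy---bridging the two models $Y_1,Y_2$ through a third one and extracting the two inclusions from Propositions \ref{pullback-alpha} and \ref{from-U-to-Y}---is exactly the paper's argument (its proof is literally the one-line combination of those two propositions). The one step that fails as written is the construction of the bridge $U$. If $U$ is an open subset of $Y_1\times_k Y_2$ containing the image of $(i_1,i_2)$, then $U$ is \emph{not} a smooth neighbourhood of $\op{Spec}(E)$: its generic point is that of the product, whose function field has transcendence degree $2\op{trdeg}(E/k)$, and the two projections pull $\alpha$ back to genuinely different symbols there --- e.g.\ for $Y_1=Y_2=\gm$ and $\alpha=\{t\}$ one gets $\{t_1\}\neq\{t_2\}$ in $K^M_1(k(t_1,t_2))/2$. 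So the phrase ``$\alpha$ pulls back to the same element at the generic point via either projection'' is false for this $U$, the symbol ``$\alpha_U$'' and hence the object $\wt{M}_{\alpha,U}$ are ambiguous (you really have two objects $\wt{M}_{p_1^*\alpha,U}$ and $\wt{M}_{p_2^*\alpha,U}$), and the middle term of the chain $\la\wt{M}_{\alpha,Y_1}\ra=\la\wt{M}_{\alpha,U}\ra=\la\wt{M}_{\alpha,Y_2}\ra$ is not established.

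The repair is small and keeps the rest of your argument intact: the image of $(i_1,i_2)$ is a single point $z$ with residue field $E$, and $U$ should be taken to be a smooth open subvariety of its closure $\overline{\{z\}}\subset Y_1\times_k Y_2$ (the graph of the birational correspondence), so that $z$ \emph{is} the generic point of $U$. Then both restrictions $p_j|_U:U\row Y_j$ induce the identity on $E$ at the generic point, $\alpha$ is unramified on $U$ after shrinking, Proposition \ref{pullback-alpha} applied to $p_j|_U$ gives $\wt{M}_{\alpha,U}\in\la\wt{M}_{\alpha,Y_j}\ra$, and your spread-out section $s:V\row U$ of $p_j|_U$ (which exists precisely because $p_j|_U$ is birational) combined with Propositions \ref{pullback-alpha} and \ref{from-U-to-Y} gives $\wt{M}_{\alpha,Y_j}\in\la\wt{M}_{\alpha,U}\ra$. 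Even more economically, one can avoid the product altogether: two smooth models of the same field $E$ contain isomorphic dense open subvarieties $W$, and the two propositions applied to the open immersions $W\subset Y_j$ already yield $\la\wt{M}_{\alpha,Y_j}\ra=\la\wt{M}_{\alpha,W}\ra$ for $j=1,2$, which is the intended reading of the paper's proof.
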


We also obtain:

\begin{cor}
 \label{orthog-tM-gen-pt}
 Let $N\in\dmgmkD$, $E/k$ be a finitely generated extension and $\alpha\in K^M_*(E)/2$ be a pure symbol. Then the following conditions are equivalent:
 $$
  (1)\hspace{2mm}N\in\wt{M}_{\alpha,Y}^{\perp};\hspace{5mm}
  (2)\hspace{2mm}N_E\in\wt{M}_{\alpha}^{\perp};
  \hspace{5mm}
  (3)\hspace{2mm}N_E\in\la M_{\alpha}\ra.
 $$
\end{cor}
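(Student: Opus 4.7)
The plan is to split the three-way equivalence as $(2)\Leftrightarrow(3)$ and $(1)\Leftrightarrow(2)$. The equivalence $(2)\Leftrightarrow(3)$ is a statement over the single field $E$: the thick tensor ideals $\la\wt{M}_\alpha\ra$ and $\la M_\alpha\ra$ in $\dmgmEF{E}{\ff_2}$ are mutual tensor-orthogonals, as recalled in the introduction and established via the complementary projectors $\hii_\alpha,\whii_\alpha$ in the proof of \cite[Theorem 3.5]{Iso}. Hence $N_E\otimes\wt{M}_\alpha=0$ is equivalent to $N_E\in\la M_\alpha\ra$.

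For $(1)\Rightarrow(2)$ I would restrict everything to $E$. By the base-change formula for the tensor pushforward along $\op{Spec}(E)\to\op{Spec}(k)$, one has $\wt{M}_{\alpha,Y}|_E=(\pi_{Y_E})_\#(\wt{M}_{\alpha/Y}|_{Y_E})$ in $\dmgmEF{E}{\ff_2}$, where $Y_E=Y\times_k\op{Spec}(E)$. The structure morphism $\op{Spec}(E)\to Y$, together with the identity on $\op{Spec}(E)$, yields a canonical section $s\colon\op{Spec}(E)\to Y_E$ with $s^*(\wt{M}_{\alpha/Y}|_{Y_E})=\wt{M}_\alpha$. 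Applying Proposition \ref{pullback-alpha} to $s$ (viewed as a morphism of smooth $E$-varieties) gives $\wt{M}_\alpha\in\la\wt{M}_{\alpha,Y}|_E\ra$ in $\dmgmEF{E}{\ff_2}$. Restricting condition (1) to $E$ gives $N_E\otimes\wt{M}_{\alpha,Y}|_E=0$; since $\{Z\,|\,N_E\otimes Z=0\}$ is a thick tensor ideal, it then also contains $\wt{M}_\alpha$, yielding (2).

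For $(2)\Rightarrow(1)$ the strategy is to spread out the vanishing from the generic point to an open $U\subset Y$. Consider $\pi_Y^*(N)\otimes\wt{M}_{\alpha/Y}\in\dmgmEF{Y}{\ff_2}$; its restriction to $\op{Spec}(E)$ is $N_E\otimes\wt{M}_\alpha$, which vanishes by hypothesis. By continuity of the compact motivic category along the cofiltered limit $\op{Spec}(E)=\varprojlim_U U$ over smooth open neighbourhoods $U\subset Y$ of $\op{Spec}(E)$ (the standard finite-presentation principle for the six-functor formalism, cf.\ \cite{CD}), this vanishing already holds over some such $U$, i.e.\ $\pi_U^*(N)\otimes\wt{M}_{\alpha/U}=0$ in $\dmgmEF{U}{\ff_2}$. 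Applying $(\pi_U)_\#$ and the projection formula gives $N\otimes\wt{M}_{\alpha,U}=0$ in $\dmgmkF{\ff_2}$. Proposition \ref{from-U-to-Y} then yields $\wt{M}_{\alpha,Y}\in\la\wt{M}_{\alpha,U}\ra$, and since the thick tensor ideal $\{Z\,|\,N\otimes Z=0\}$ contains $\wt{M}_{\alpha,U}$, it also contains $\wt{M}_{\alpha,Y}$; this is (1). The spreading-out step is the main technical point of the proof; all other steps reduce to Propositions \ref{pullback-alpha} and \ref{from-U-to-Y} together with standard base-change and projection-formula manipulations.
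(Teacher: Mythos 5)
Your proposal is correct and follows essentially the same route as the paper: the (1)$\Leftrightarrow$(2) equivalence via Propositions \ref{pullback-alpha} and \ref{from-U-to-Y} together with a spreading-out/generic-point argument (the paper phrases this via vanishing of Nisnevich sheaves with transfers at the generic point, you via continuity of $\op{DM}_{gm}$ along $\op{Spec}(E)=\varprojlim_U U$ -- the same principle), and (2)$\Leftrightarrow$(3) via the fact that $\wt{M}_{\alpha}^{\perp}$ is exactly $\la M_{\alpha}\ra$. For the latter step your appeal to ``mutual tensor-orthogonality'' is exactly the fact the paper justifies by noting that $\wt{M}_{\alpha}^{\perp}$ is the compact part of $\whii_{\alpha}^{\perp}=\la\hii_{\alpha}\ra$ and that $\hii_{\alpha}$ lies in the localising ideal generated by $M_{\alpha}$, so the content and the source (\cite[Theorem 3.5]{Iso}) coincide.
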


\begin{proof}
 Since a Nisnevich sheaf with transfers is zero at the generic point of some variety if and only if it is zero in some neighbourhood of it, using Proposition \ref{from-U-to-Y} this gives $(2)\Rightarrow(1)$. Suppose now that $N_E\otimes\wt{M}_{\alpha}\neq 0$. Then it has a non-zero motivic cohomology class $w$ over some purely transcendental extension $E(V)$ (as this object is compact). Shrinking $Y_{k(V)}$ to some open $U$, we may assume that all the Brown-Gersten-Quillen differentials on $w$ are trivial, and so, it lifts to a non-trivial element of motivic cohomology of $N_{k(V)}\otimes\wt{M}_{\alpha,U}$. Hence the latter object is non-zero. By Proposition \ref{pullback-alpha}, then so is $N\otimes\wt{M}_{\alpha,Y}$. Thus, $(1)\Rightarrow(2)$.
The equivalence $(2)\Leftrightarrow(3)$ follows from Proposition \ref{orthog-MMt}.
 \Qed
\end{proof}

Finally, we have the control over the thick tensor ideal generated by extended reduced Rost motives under residues.

\begin{prop}
 \label{tM-residue}
 Let $R$ be a DVR of finite type over $k$, with the function field $K$ and the residue field $\kappa$. Let 
 $\beta\in K^M_{n+1}(K)/2$ be a pure symbol and $\alpha=\partial(\beta)\in K^M_n(\kappa)/2$ be its residue. Let $Y$ and $V$ be smooth neighbourhoods of
 $\op{Spec}(K)$ and $\op{Spec}(\kappa)$, where the respective symbols are unramified. Then
 $\wt{M}_{\alpha,V}\in\la\wt{M}_{\beta,Y}\ra$.
\end{prop}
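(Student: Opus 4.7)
The plan is to reduce to a Gysin-type computation on a common smooth model of $R$, and then refine it using the Kummer cover associated to the uniformizer $t$. Since $\beta$ is a pure symbol in $K^{M}_{n+1}(K)/2$ with residue $\alpha$, by the standard classification of such symbols over a DVR of characteristic zero one may write $\beta=\{u_{1},\dots,u_{n},t\}$ with $u_{i}\in R^{\ast}$ units and $t\in R$ a uniformizer, so that $\alpha=\{\bar{u}_{1},\dots,\bar{u}_{n}\}$. I would then choose a smooth $k$-variety ${\cal W}$ with smooth divisor $D\subset{\cal W}$ and open complement $U:={\cal W}\setminus D$, carrying extensions of the $u_{i}$ to regular units on ${\cal W}$ and an extension of $t$ to a regular function with $V(t)=D$, such that $\op{Spec}(R)\hookrightarrow{\cal W}$ sends $\op{Spec}(\kappa)$ into $D$ and $\op{Spec}(K)$ into $U$. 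Then $\alpha$ is unramified on $D$ and $\beta$ is unramified on $U$, and by Corollary~\ref{ideal-indep-of-Y} together with Proposition~\ref{from-U-to-Y} (to descend from the given $Y$ and $V$), the problem reduces to showing $\wt{M}_{\alpha,D}\in\la\wt{M}_{\beta,U}\ra$.

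Next I would introduce the lifted symbol $\tilde{\alpha}:=\{u_{1},\dots,u_{n}\}\in K^{M}_{n}({\cal W})/2$, unramified on all of ${\cal W}$, satisfying $\tilde{\alpha}|_{D}=\alpha$ and $\beta=\tilde{\alpha}|_{U}\cdot\{t\}$. Applying the closed--open Gysin triangle for the smooth pair $D\hookrightarrow{\cal W}\hookleftarrow U$ to $\wt{M}_{\tilde{\alpha}/{\cal W}}\in\dmgmEF{{\cal W}}{\ff_{2}}$, whose restrictions to $U$ and $D$ are $\wt{M}_{\tilde{\alpha}|_{U}/U}$ and $\wt{M}_{\alpha/D}$ respectively, and pushing it down to $\dmgmkF{\ff_{2}}$ yields the distinguished triangle
$$
\wt{M}_{\tilde{\alpha}|_{U},U}\longrightarrow\wt{M}_{\tilde{\alpha},{\cal W}}\longrightarrow\wt{M}_{\alpha,D}(1)[2]\longrightarrow\wt{M}_{\tilde{\alpha}|_{U},U}[1].
$$
This already shows $\wt{M}_{\alpha,D}\in\la\wt{M}_{\tilde{\alpha}|_{U},U}\ra$, but Proposition~\ref{divisible-Mt} applied to $\tilde{\alpha}\mid\beta$ only gives $\la\wt{M}_{\beta,U}\ra\subset\la\wt{M}_{\tilde{\alpha}|_{U},U}\ra$, the opposite containment to what is needed. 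To upgrade the conclusion to the strictly smaller ideal $\la\wt{M}_{\beta,U}\ra$, one must incorporate the specific structure of $\{t\}$ as the class of the uniformizer.

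The main technical step is to use the Kummer square-root-of-$t$ cover $\pi:{\cal W}':=\op{Spec}_{{\cal W}}\,\co_{{\cal W}}[s]/(s^{2}-t)\to{\cal W}$. This ${\cal W}'$ is smooth over $k$, finite of degree $2$ over ${\cal W}$, etale along $U$, and ramified along $D$ with reduced ramification divisor $D'\cong D$. Its crucial property is $\pi^{\ast}\{t\}=\{s^{2}\}=0\pmod{2}$, hence $\pi^{\ast}\beta=0$ on $U':=\pi^{-1}(U)$ and consequently $\pi^{\ast}\wt{M}_{\beta/U}=0$. I would run the Gysin triangle on ${\cal W}'$ for the pair $D'\hookrightarrow{\cal W}'\hookleftarrow U'$ applied to $\wt{M}_{\pi^{\ast}\tilde{\alpha}/{\cal W}'}$, compare it with the Gysin triangle on ${\cal W}$ through the transfer $\pi_{\ast}$, and identify the relative Kummer motive $\op{Cone}(T_{{\cal W}}\to\pi_{\ast}T_{{\cal W}'})$ in $\dmgmEF{{\cal W}}{\ff_{2}}$ -- which on $U$ recovers the reduced Rost motive $\wt{M}_{\{t\}/U}$ and whose behaviour along $D$ records the residue structure. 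This should extract a refined distinguished triangle in which $\wt{M}_{\tilde{\alpha}|_{U},U}$ is replaced by $\wt{M}_{\beta,U}$ and whose third term controls $\wt{M}_{\alpha,D}$; pushing to $\dmgmkF{\ff_{2}}$ and invoking Corollary~\ref{ideal-indep-of-Y} to return to the original neighborhoods then concludes.

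The hard part is precisely this Kummer refinement: rigorously constructing the required distinguished triangle demands a careful comparison of the Gysin triangles on ${\cal W}$ and ${\cal W}'$ along the ramified degree-$2$ cover $\pi$, together with a structural description of $\op{Cone}(T_{{\cal W}}\to\pi_{\ast}T_{{\cal W}'})$ in relation to $\wt{M}_{\{t\}/U}$ and to residue data along $D$. This is the step at which the residue equality $\partial\beta=\alpha$ enters the motivic argument non-formally, distinguishing Proposition~\ref{tM-residue} from the formal consequence of divisibility established in Proposition~\ref{divisible-Mt}.
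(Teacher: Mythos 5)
Your set-up (writing $\beta=\{t\}\cdot\tilde\alpha|_U$ with $\tilde\alpha$ unramified, spreading out to a smooth pair $D\hookrightarrow{\cal W}\hookleftarrow U$, and using Corollary \ref{ideal-indep-of-Y} and Proposition \ref{from-U-to-Y} to move between neighbourhoods) is in the same spirit as the paper's reduction, and your first Gysin triangle is fine (though by itself it only puts $\wt{M}_{\alpha,D}$ in the ideal generated by $\wt{M}_{\tilde\alpha|_U,U}$ \emph{and} $\wt{M}_{\tilde\alpha,{\cal W}}$; you need Proposition \ref{from-U-to-Y} again to drop the second generator). But the proposal stops exactly where the proposition begins: the passage from $\la\wt{M}_{\tilde\alpha|_U,U}\ra$ to the a priori smaller ideal $\la\wt{M}_{\beta,U}\ra$ is only announced ("this should extract a refined distinguished triangle"), with no construction and no mechanism for turning such a triangle, if it existed, into thick-ideal membership. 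This is a genuine gap, not a routine verification. Worse, the proposed mechanism is doubtful on its face: with $\ff_2$-coefficients, transfer along the degree-$2$ Kummer cover $\pi:{\cal W}'\row{\cal W}$ composes with pullback to multiplication by $2=0$, so comparing the two Gysin triangles through $\pi_*$ cannot be expected to split off or replace anything; and the vanishing $\pi^*\wt{M}_{\beta/U}=0$ constrains what $\la\wt{M}_{\beta,U}\ra$ can contain rather than producing new members of it.

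For contrast, the paper handles this step by a different route: after the same normalization $\beta=\{s\}\cdot\alpha'$ it uses the Henselization splitting (Lemma \ref{Henz-Kk}), Proposition \ref{pullback-alpha}, and the odd-degree transfer Lemma \ref{odd-deg-Mt} -- modifying the uniformizer by a square so that it defines an \emph{odd}-degree map of the curve $\ov{X}$ to $\pp^1_{\kappa}$ (odd degree is essential precisely because degree-$2$ transfers die mod $2$) -- to reduce to the model case $Y=\gm$, $\beta=\{t\}\cdot\alpha$ over the residue field. The remaining content is Proposition \ref{Gm-residue}, which is a genuine computation: the motivic cohomology of $\wt{M}_{\{t\}\cdot\alpha,\gm}$ is determined via a Brown--Gersten--Quillen sequence and a Springer-type argument (Lemma \ref{Spr}), the homotopy $t$-structure is used to exhibit $\wt{M}_{\{t\}\cdot\alpha,\gm}$ as an extension of $\wt{M}_{\alpha}(1)[1]$ by $\wt{M}_{\alpha}(2^{n-1})[2^n]$, and, crucially, the connecting map is shown to be nilpotent via Proposition \ref{et-comp-fin-diag} (Beilinson--Lichtenbaum), which is what converts the extension into the membership $\wt{M}_{\alpha}\in\la\wt{M}_{\{t\}\cdot\alpha,\gm}\ra$. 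None of these ingredients (the cohomology computation, the $t$-structure filtration, the nilpotence step) appear in your plan, so as it stands the proof of the key containment is missing.
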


\begin{proof}
 By Corollory \ref{ideal-indep-of-Y}, we may substitute $V$ by any non-empty open subvariety of it. We may assume that $\beta=\{s\}\cdot\alpha'$, where $s$ is a local parameter of our DVR and $\alpha'$ is unramified.
 Using Lemma \ref{Henz-Kk} and Proposition \ref{pullback-alpha}, arguing as in the proof of Proposition \ref{from-U-to-Y}, we may assume that there is a smooth morphism 
 $f:X\row V$ and an open-closed pair $Y\row X\low V$ of $V$-varieties, such that $\alpha'=f^*(\alpha)$. 
 Again, by Corollory \ref{ideal-indep-of-Y}, we may replace $V$ by an arbitrarily small neighbourhood of $\op{Spec}(\kappa)$. Let $\eta$ be the generic point of $V$ and $X_{\eta}$, $Y_{\eta}$ be the generic fibers of
 the respective projections. Here $X_{\eta}$ is a smooth curve over $\op{Spec}(\kappa)$, $v=g(\eta)$ is a rational point on it and $Y_{\eta}$ is the complement to $v$.
 It is sufficient to show that $\wt{M}_{\alpha}\in\la\wt{M}_{\{s\}\cdot f^*(\alpha),Y_{\eta}}\ra\subset\dmgmEF{\kappa}{\ff_2}$. Indeed, then $\wt{M}_{\alpha/V'}\in\la f_{\#}(\wt{M}_{\{s\}\cdot f^*(\alpha)/f^{-1}(V')})\ra\subset\dmgmEF{V'}{\ff_2}$, for some sufficiently small open neighbourhood $V'$ of $\eta$ in $V$. 
By projection formula, it sits in $f_{\#}\la\wt{M}_{\{s\}\cdot f^*(\alpha)/f^{-1}(V')})\ra$.
By Corollary \ref{pi-dash-tensor-ideal},
 $\wt{M}_{\alpha,V'}=(\pi_{V'})_{\#}\wt{M}_{\alpha/V'}\in\la\wt{M}_{\{s\}\cdot f^*(\alpha),f^{-1}(V')}\ra=\la\wt{M}_{\beta,Y}\ra\subset\dmgmkF{\ff_2}$.
 
 \begin{lem}
  \label{odd-deg-Mt}
  Let $j:\op{Spec}(E)\row\op{Spec}(F)$ be an extension of odd degree, $\gamma\in K^M_*(F)/2$ be a pure symbol, and $Y$, $Z$ be smooth neighbourhoods of $\op{Spec}(E)$
  and $\op{Spec}(F)$, where the symbols $j^*(\gamma)$ and $\gamma$ are unramified. Then
  $\la\wt{M}_{j^*(\gamma),Y}\ra=\la\wt{M}_{\gamma,Z}\ra$.
 \end{lem}

 \begin{proof}
  From Corollary \ref{ideal-indep-of-Y} we may assume that
  $j$ extends to an etale morphism $j:Y\row Z$ of odd degree.
  From Proposition \ref{pullback-alpha} we know that
  $\wt{M}_{j^*(\gamma),Y}\in\la\wt{M}_{\gamma,Z}\ra$.
  Finally, the composition
  $$
  \wt{M}_{\gamma/Z}\row j_*j^*(\wt{M}_{\gamma/Z})=j_{\#}j^*(\wt{M}_{\gamma/Z})\row\wt{M}_{\gamma/Z}
  $$
  is the multiplication by $\ddeg(j)$, which is odd. So,
  $\wt{M}_{\gamma,Z}\in\la\wt{M}_{j^*(\gamma),Y}\ra$.
 \Qed
 \end{proof}
 
 The local parameter $s\in K^{\times}$ defines a rational function on the smooth projective model $\ov{X}$ of $X_{\eta}$, which gives a map $j:\ov{X}\row\pp^1_{\kappa}$, such that $j^*(t)=s$, for the standard coordinate $t$
 on $\pp^1$. I claim that $s$ may be modified by a square to make the degree of $j$ odd.
 Indeed, let $D_0$ and $D_{\infty}$ denote the divisor of zeroes, respectively, poles of $s$. Then $D_0=[x]+B$, where $x$ is our point and $B$ doesn't contain $x$. 
 We can find another
 local parameter $s'$, with the divisors of zeroes and poles $D'_0=[x]+B'$, respectively, $D'_{\infty}$, where
 $D'_{\infty}$ doesn't intersect $D_{\infty}$
 and $B$ doesn't intersect $B'$. Then the divisors of
 zeroes, repectively, poles of $s/(s')^2$ will be:
 $B+2D'_{\infty}$, respectively, $[x]+2B'+D_{\infty}$
 and there are no further cancellations. 
 Either $\ddeg(D_0)$ is odd and $s$ gives a map of odd degree, or $\ddeg(B+2D'_{\infty})$ is odd and $s/(s')^2$
 gives a map of odd degree.
 
 Since 
 $j^*(\{t\})=\{s\}\in K^M_1(\kappa(X_{\eta}))/2=K^{\times}/(K^{\times})^2$, by Lemma \ref{odd-deg-Mt}, $\la\wt{M}_{\{s\}\cdot f^*(\alpha),Y_{\eta}}\ra=
 \la\wt{M}_{\{t\}\cdot\alpha,\gm}\ra$ and so, it is enough to prove our result for the case where $Z=\op{Spec}(\kappa)$, $Y=\gm$ and $\beta=\{t\}\cdot\alpha$, for $\alpha\in K^M_n(\kappa)/2$ and $t$ - the coordinate on $\gm$. This follows from the following Proposition.
 \Qed
\end{proof}

\begin{prop}
 \label{Gm-residue}
 Let $\alpha\in K^M_n(k)/2$ be a pure symbol and $t$ be the coordinate on $\gm$. Then
 $\la\wt{M}_{\{t\}\cdot\alpha,\gm}\ra=\la\wt{M}_{\alpha}\ra$.
\end{prop}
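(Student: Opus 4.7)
The plan is to prove the two inclusions of thick tensor ideals separately, treating the ``$\supseteq$'' direction as the substantive one.

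The containment $\la\wt{M}_{\{t\}\cdot\alpha,\gm}\ra \subseteq \la\wt{M}_\alpha\ra$ is immediate from earlier results. Applying Proposition \ref{pullback-alpha} to the structure morphism $\pi_\gm : \gm \to \op{Spec}(k)$ and the symbol $\alpha$ (pulled back from $k$ to $\gm$), one gets $\wt{M}_{\alpha,\gm} \in \la\wt{M}_\alpha\ra$. Since $\alpha$ divides $\{t\}\cdot\alpha$ in $K^M_\ast(k(t))/2$ and both symbols are unramified on $\gm$, Proposition \ref{divisible-Mt} then gives $\wt{M}_{\{t\}\cdot\alpha,\gm} \in \la\wt{M}_{\alpha,\gm}\ra \subseteq \la\wt{M}_\alpha\ra$.

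For the reverse containment $\wt{M}_\alpha \in \la\wt{M}_{\{t\}\cdot\alpha,\gm}\ra$, the governing principle is that the tame residue of $\{t\}\cdot\alpha$ with respect to the DVR $k[t]_{(t)}$ equals $\alpha$, and the task is to realize this residue operation at the motivic level. The natural tool is the localization (Gysin) triangle for the open-closed pair $\gm \stackrel{j}{\hookrightarrow} \mathbb{A}^1 \stackrel{i}{\hookleftarrow} \{0\}$. Although $\{t\}\cdot\alpha$ itself does not extend across $t=0$, the reduced \v{C}ech motive $\whii_{\{t\}\cdot\alpha/\gm}$ and the coalgebra $C = \wt{M}_{\{t\}\cdot\alpha/\gm} \otimes \wt{M}_{\{t\}\cdot\alpha/\gm}^{\vee}$ can be studied over $\mathbb{A}^1$ via their extensions by $j_\#$. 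Since $\alpha$ divides $\{t\}\cdot\alpha$, we may also use $\whii_{\{t\}\cdot\alpha/\gm}\otimes\whii_{\alpha/\gm}=\whii_{\{t\}\cdot\alpha/\gm}$, where $\whii_{\alpha/\gm}$ is the restriction of the constant family $\whii_\alpha$ already defined on $\mathbb{A}^1$.

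The concrete strategy is to produce a distinguished triangle in $\dmgmkF{\ff_2}$ whose $\{0\}$-boundary term, coming from $i_\ast i^\ast$, is identified with $\wt{M}_\alpha$ up to Tate twist and suspension. The identification proceeds as in the proof of Proposition \ref{triv-erRm}: the Voevodsky class $\tau^{-1}\cdot\{t\}\cdot\alpha \in \moco{n+2}{n}(\whii_{\{t\}\cdot\alpha/\gm};\ff_2)$ has tame residue $\tau^{-1}\cdot\alpha$ in the corresponding group over $\op{Spec}(k)$, by the Brown-Gersten-Quillen spectral sequence applied to $\mathbb{A}^1$, with vanishing from the reduced \v{C}ech structure of Pfister-quadric fibers; this produces the connecting morphism whose cone contains $\wt{M}_\alpha$. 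The argument is then finalized by the coalgebra trick (Proposition \ref{coalg-pullback}) and Proposition \ref{reromo-decomp}, which guarantee that generation by the relevant boundary motive stays inside $\la\wt{M}_{\{t\}\cdot\alpha,\gm}\ra$.

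The main obstacle lies in making the boundary extraction canonical and in upgrading the resulting extension class to a containment in the thick tensor ideal. As in Proposition \ref{divisible-Mt}, this requires a nilpotence argument based on the Beilinson-Lichtenbaum conjecture applied to the compact, etale-trivial object $(\pi_\gm)_\#(C)$, which forces only finitely many non-zero diagonals in motivic cohomology and hence nilpotence of the relevant obstruction class. Once the triangle is in hand and the nilpotence is verified, $\wt{M}_\alpha \in \la\wt{M}_{\{t\}\cdot\alpha,\gm}\ra$, completing the proposition.
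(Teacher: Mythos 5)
The easy containment $\la\wt{M}_{\{t\}\cdot\alpha,\gm}\ra\subseteq\la\wt{M}_{\alpha}\ra$ in your first paragraph is correct and is exactly the paper's argument (Propositions \ref{pullback-alpha} and \ref{divisible-Mt}). The substantive direction, however, has a genuine gap. Your mechanism for producing the key triangle --- the localization triangle for $\gm\stackrel{j}{\hookrightarrow}\aaa^1\stackrel{i}{\hookleftarrow}\{0\}$ applied to the $j_{\#}$-extensions of $\whii_{\{t\}\cdot\alpha/\gm}$ or of the coalgebra $C$ --- cannot work as stated: for an open immersion $j$ with closed complement $i$ one has $i^*j_{\#}=0$, so the ``$\{0\}$-boundary term coming from $i_*i^*$'' of any $j_{\#}$-extension vanishes and the localization triangle degenerates. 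If you instead use $j_*$ to obtain a nontrivial fiber at $t=0$, then identifying that fiber with a twist of $\wt{M}_{\alpha}$ is precisely the hard content of the proposition; your appeal to the residue of the class $\tau^{-1}\cdot\{t\}\cdot\alpha$ only tracks a single cohomology class and does not produce the asserted distinguished triangle, let alone the identification of its boundary term with $\wt{M}_{\alpha}$. In short, the step ``this produces the connecting morphism whose cone contains $\wt{M}_{\alpha}$'' is exactly what needs to be proved, and no argument is given.

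For comparison, the paper proceeds differently: it computes the full motivic cohomology of $\wt{M}_{\{t\}\cdot\alpha,\gm}$ via the localization (BGQ-type) sequence over $\gm$ combined with a Springer-type statement (Lemma \ref{Spr}) showing that the residue map is surjective with kernel $\{t\}\cdot\alpha\cdot K^M_*(k)/2$; it then constructs an explicit map $f:\wt{M}_{\alpha}(2^{n-1})[2^n]\row\wt{M}_{\{t\}\cdot\alpha,\gm}$ by lifting a class expressed in the module structure over the motivic homology of $\whii_{\alpha}$ (with degree considerations giving existence and uniqueness of the lift), and verifies, using the homotopy $t$-structure truncations $\tau_{>2^{n-1}}$ and $\tau_{\leq 2^{n-1}}$ over all field extensions, that $f$ and its dual $f^{\vee}$ fit into a distinguished triangle exhibiting $\wt{M}_{\{t\}\cdot\alpha,\gm}$ as an extension of two shifted copies of $\wt{M}_{\alpha}$. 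Only then does the nilpotence argument (Proposition \ref{et-comp-fin-diag}) apply --- your final step matches this in spirit, but it is applied to a triangle you have not constructed. Note also that the paper must treat $n=0$ separately (there $\wt{M}_{\{t\},\gm}=\wt{M}_{\{\}}(1)$ by an explicit computation with the squaring map on $\gm$), since the $r_I$-description of the cohomology used for $n>0$ is vacuous in that case; your proposal does not address this either.
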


\begin{proof}
 It follows from Propositions \ref{pullback-alpha} and
 \ref{divisible-Mt} that $\wt{M}_{\{t\}\cdot\alpha,\gm}\in\la\wt{M}_{\alpha}\ra$. 
 
 To prove the other inclusion, we will treat the cases: $n=0$ and $n>0$ separately. 
 
 \noindent \un{Case $n=0$}: In this case, $\beta=\{t\}$ and the respective Pfister fibration is the ``square map'' $\gm\stackrel{*2}{\row}\gm$. On the level of motives, it is $T\oplus T(1)[1]\stackrel{id\oplus 0}{\lrow}T\oplus T(1)[1]$. Thus, the extended reduced Rost motive $\wt{M}_{\{t\},\gm}$ fits the diagram:
 \begin{equation*}
 \label{nzero-eromo}
 \xymatrix{
 & \gm \ar[d]^{[1]} \ar[rd]^{[1]} & \\
 \gm \ar[ru]^{*2} & R \ar[l] \ar[r] \ar@{}[lu]|-(0.22){\star} \ar@{}[rd]|-(0.22){\star} & \wt{M}_{\{t\},\gm} \ar[ld]^{[1]}\\
 & \gm \ar[u] \ar[lu]^{(*2)^{\vee}} &
 },
 \end{equation*}
 where duality is with respect to $\Homi(-,T(1)[1])$.

 Thus, we have an exact triangle:
 $$
 T(1)\row\wt{M}_{\{t\},\gm}\row T[1]\stackrel{u}{\row}T(1)[1],
 $$
 where $u$ is either $\tau$, or zero. Since $\wt{M}_{\{t\},\gm}$ disappers in \'etale topology, we get that 
 $u=\tau$, and so, $\wt{M}_{\{t\},\gm}=\op{Cone}(T\stackrel{\tau}{\row}T(1))=\wt{M}_{\{\}}(1)$. So, 
 not only ideals, but even reduced Rost motives themselves coincide up to Tate-shift. 
 
 \noindent \un{Case $n>0$}:
 We will identify $\wt{M}_{\{t\}\cdot\alpha,\gm}$ with the cone of a nilpotent map between two shifted copies of $\wt{M}_{\alpha}$. Let's start by computing the motivic cohomology of $\wt{M}_{\{t\}\cdot\alpha,\gm}$. 
 
 The Brown-Gersten-Quillen type spectral sequence gives
 a short exact sequence:
 $$
 0\row\op{Coker}^{j-1,i}\row\moco{j}{i}(\wt{M}_{\{t\}\cdot\alpha,\gm};\ff_2)\row\op{Ker}^{j,i}\row 0,
 $$
 where $\op{Ker}^{j,i}$ and $\op{Coker}^{j,i}$ is the kernel, respectively, cokernel of the map:
 $$
 \moco{j}{i}(\wt{M}_{\beta_{k(t)}};\ff_2)\stackrel{\partial}{\lrow}\operatornamewithlimits{\oplus}_{x\in\gm^{(1)}}
 \moco{j-1}{i-1}(\wt{M}_{\beta_{k(x)}};\ff_2),
 $$
 where $\beta=\{t\}\cdot\alpha$. Here $\wt{M}_{\beta_{k(t)}}$ and $\wt{M}_{\beta_{k(x)}}$ are the usual reduced Rost motives corresponding to the pure symbol $\beta$ restricted to the respective point (note that $\beta$ is unramified on $\gm$, so such specialisations are canonical). 
 
 For a pure symbol $\gamma\in K^M_{n+1}(F)/2$, the motivic cohomology of $\wt{M}_{\gamma}$ is described as follows.
 It is concentrated on $2^n$ diagonals, each isomorphic to $R_{\gamma}=\gamma\cdot K^M_*(F)/2$ up to shift.
 The generators are parametrised by the subsets of $\ov{(n-1)}=[0,1,\ldots,n-1]$.
 More precisely,
 $$
 \moco{*}{*'}(\wt{M}_{\gamma};\ff_2)=\operatornamewithlimits{\oplus}_{I\subset \ov{(n-1)}}r_I\cdot r_n^{-1}\cdot R_{\gamma}.
 $$
 Use \cite[Theorem 3.5, Corollary 3.6]{Iso} and the exact
 triangle (with $d=2^n-1$):
 $$
 \whii_{\gamma}(d)[2d]\stackrel{r_n}{\lrow}\whii_{\gamma}[-1]\lrow\wt{M}_{\gamma}\lrow\whii_{\gamma}(d)[2d+1].
 $$
 It has a natural structure of a module over the motivic homology of $\whii_{\gamma}$. The latter ring is generated over the ring $R_{\gamma}=K^M_*(F)/\kker(\cdot\gamma)$ by elements $r_i$, $0\leq i\leq n$, where
 $\ddeg(r_i)=(1-2^{i})[1-2^{i+1}]$ - see
 \cite[Theorem 3.5]{Iso} (the above $r_I$ is just the product $\prod_{i\in I} r_i$). Moreover, as such a module, it has a single generator: $r_n^{-1}$.
 
 Since $\beta$ is divisible by $\alpha$, for any point $y$
 of $\gm$, $\wt{M}_{\beta_{k(y)}}\otimes\whii_{\alpha_{k(y)}}\cong\wt{M}_{\beta_{k(y)}}$. In particular, the motivic cohomology of $\wt{M}_{\beta_{k(y)}}$ is naturally a module over the motivic homology of $\whii_{\alpha}$ (since the motivic cohomology of $\whii_{\alpha}$ is a module over it - see \cite[Corollary 3.6]{Iso}). The map $\partial$ above is a map of $\moho{*}{*'}(\whii_{\alpha};\ff_2)$-modules.
 This map naturally splits (diagonal-by-diagonal) into a direct sum of maps
 $r_I\cdot r_n^{-1}\cdot \left(R_{\beta_{k(t)}}\stackrel{\partial}{\lrow}\oplus_{x\in\gm^{(1)}}R_{\beta_{k(x)}}\right)$, for $I\subset\ov{(n-1)}$. 
 
 \begin{lem}
  \label{Spr}
  The map $R_{\beta_{k(t)}}\stackrel{\partial}{\lrow}\oplus_{x\in\gm^{(1)}}R_{\beta_{k(x)}}$ is surjective. Its kernel is $\{t\}\cdot\alpha\cdot K^M_*(k)/2$.
 \end{lem}

 \begin{proof}
  For $x\in\gm$, the map $\partial_x:K^M_{*+1}(k(t))/2\row K^M_*(k(x))/2$
  maps $R_{\beta_{k(t)}}=\{t\}\cdot\alpha\cdot K^M_*(k(t))/2$ to $R_{\beta_{k(x)}}$. The map
  $\partial_0:K^M_{*+1}(k(t))/2\row K^M_*(k)/2$ maps
  $R_{\beta_{k(t)}}$ to $R_{\alpha}=\alpha\cdot K^M_*(k)/2$. I claim that the map
  $$
  R_{\beta_{k(t)}}\stackrel{\partial}{\lrow}
  \left(\oplus_{x\in\gm^{(1)}}R_{\beta_{k(x)}}\right)
  \oplus R_{\alpha}
  $$
  is an isomorphism. Indeed, by the Springer's theorem,
  this map is injective. It is sufficient to observe that the image of the restriction $j:K^M_*(k)/2\row K^M_*(k(t))/2$ intersects trivially with $R_{\beta_{k(t)}}$ (since $\partial_0(\{-t\}\cdot j_*(u))=u$, while $\{-t\}\cdot R_{\beta_{k(t)}}=0$).
  To show surjectivity, we need to repeat the arguments of Springer. We start by observing that, for any $u\in R_{\alpha}$, $\partial_0(\{t\}\cdot u)=u$ and $\partial_x(\{t\}\cdot u)=0$, for any $x\in\gm$. Then, by induction on the degree of a point $x$, we show that $R_{\beta_{k(x)}}$ is covered modulo
  points of smaller degree and the origin (i.e., $R_{\alpha}$). Let $p(t)$ be the irreducible polynomials
  of degree $m$ defining the point $x$. Then an element $w$ in 
  $R_{\beta_{k(x)}}=(\{t\}\cdot\alpha)_{k(x)}\cdot K^M_*(k(x))/2$ is a specialisation of an element $v\in R_{\beta_{k(t)}}$ expressed using polynomials in $t$ of degree smaller than $m$. Then 
  $\partial_x(\{p(t)\}\cdot v)=w$, while $\partial_y$
  of this element is zero, for any point $y$ of degree $\geq m$, aside from $x$. Hence, our map $\partial$ is surjective and so, an isomorphism. The Lemma is proven.
  \Qed
 \end{proof}
 
 Thus, we have computed the motivic cohomology of $\wt{M}_{\{t\}\cdot\alpha,\gm}$:
 $$
 \moco{*}{*'}(\wt{M}_{\{t\}\cdot\alpha,\gm},\ff_2)=
 \operatornamewithlimits{\oplus}_{I\subset \ov{(n-1)}}r_I\cdot r_n^{-1}\cdot R_{\alpha}.
 $$
 Note that, as a module over $A=\moho{*}{*'}(\whii_{\alpha};\ff_2)$, it is generated by a single element $r_n^{-1}$. At the same time, the motivic cohomology of $\wt{M}_{\alpha}$ is:
 $$
 \moco{*}{*'}(\wt{M}_{\alpha};\ff_2)=\operatornamewithlimits{\oplus}_{J\subset \ov{(n-2)}}r_J\cdot r_{n-1}^{-1}\cdot R_{\alpha}.
 $$
 So, the former $A$-module is an extension of two copies of the latter one. We will show that the same is true about the motives themselves.
 
 The motive $\wt{M}_{\{t\}\cdot\alpha,\gm}$ is self-dual with respect to $\Homi(-,T(2^n)[2^{n+1}-1])$. Thus, the motivic homology of it has the same structure as motivic cohomology:
 $$
 \moho{*}{*'}(\wt{M}_{\{t\}\cdot\alpha,\gm},\ff_2)=
 \operatornamewithlimits{\oplus}_{I\subset \ov{(n-1)}}r_I\cdot (r_n^{-1})^{\vee}\cdot R_{\alpha}.
 $$
 Since $\wt{M}_{\{t\}\cdot\alpha,\gm}$ is stable under $\otimes\whii_{\alpha}$, we have the identification:
 $$
 \Hom(\whii_{\alpha}(a)[b],\wt{M}_{\{t\}\cdot\alpha,\gm})=\Hom(T(a)[b],\wt{M}_{\{t\}\cdot\alpha,\gm}).
 $$
 In particular, the element $r_{n-1}(r_n^{-1})^{\vee}$
 gives the map $g:\whii_{\alpha}(2^{n-1})[2^n-1]\row
 \wt{M}_{\{t\}\cdot\alpha,\gm}$. 
 We have a distinguished triangle:
 $$
 \whii_{\alpha}(2^{n-1}-1)[2^n-2]\stackrel{r_{n-1}}{\lrow}
 \whii_{\alpha}[-1]\lrow\wt{M}_{\alpha}\lrow\whii_{\alpha}(2^{n-1}-1)[2^n-1].
 $$
 Since $r_{n-1}^2$ has {\it diagonal degree} $2^n$, while the motivic homology of $\wt{M}_{\{t\}\cdot\alpha,\gm}$ is concentrated on the diagonals in the range $[-1,2^n-2]$, $g$ lifts to a map $f:\wt{M}_{\alpha}(2^{n-1})[2^n]\row\wt{M}_{\{t\}\cdot\alpha,\gm}$. By the same degree considerations, the lifting is unique.
 
 The map $\whii_{\alpha}[-1]\row\wt{M}_{\alpha}$ is surjective on motivic homology and maps the unit $T[-1]\row\whii_{\alpha}[-1]$ to $(r_{n-1}^{-1})^{\vee}$, so $f_*((r_{n-1}^{-1})^{\vee})=r_{n-1}(r_n^{-1})^{\vee}$, by construction. Hence, $f_*$ identifies
 $$
 \moho{*}{*'}(\wt{M}_{\alpha};\ff_2)=\!\!\!\operatornamewithlimits{\oplus}_{J\subset \ov{(n-2)}}\!\!r_J\cdot (r_{n-1}^{-1})^{\vee}\cdot R_{\alpha}
 \hspace{2mm}\text{with}\hspace{2mm}
 \operatornamewithlimits{\oplus}_{J\subset \ov{(n-2)}}r_J\cdot r_{n-1} (r_{n}^{-1})^{\vee}\cdot R_{\alpha}
 \subset\moho{*}{*'}(\wt{M}_{\{t\}\cdot\alpha,\gm};\ff_2).
 $$
 The map $\whii_{\alpha}[-1]\row\wt{M}_{\alpha}$ is injective on motivic cohomology and $g^*(r_n^{-1})=r_{n-1}^{-1}$. Hence, $f^*$ identifies
 $$
 \operatornamewithlimits{\oplus}_{J\subset \ov{(n-2)}}r_J\cdot r_{n}^{-1}\cdot R_{\alpha}
 \subset\moco{*}{*'}(\wt{M}_{\{t\}\cdot\alpha,\gm};\ff_2)
 \hspace{2mm}\text{with}\hspace{2mm}
 \moco{*}{*'}(\wt{M}_{\alpha};\ff_2)=\!\!\!\operatornamewithlimits{\oplus}_{J\subset \ov{(n-2)}}\!\!r_J\cdot r_{n-1}^{-1}\cdot R_{\alpha}.
 $$
 So, $f_*$ (respectively, $f^*$) identifies motivic homology (respectively, cohomology) of $\wt{M}_{\alpha}$
 with the half of motivic homology/cohomology of $\wt{M}_{\{t\}\cdot\alpha,\gm}$. 
 Let $f^{\vee}:\wt{M}_{\{t\}\cdot\alpha,\gm}\row
 \wt{M}_{\alpha}(1)[1]$ be the dual map. Then $(f^{\vee})_*$ and $(f^{\vee})^*$ identify the other half of homology/cohomology of $\wt{M}_{\{t\}\cdot\alpha,\gm}$ with that of $\wt{M}_{\alpha}$. Note that this property holds not only over the ground field, but also over any extension of it. Hence, $f$ identifies $\wt{M}_{\alpha}(2^{n-1})[2^n]$ with the piece $\tau_{>2^{n-1}}(\wt{M}_{\{t\}\cdot\alpha,\gm})$ of the homotopy $t$-structure filtration,
 while $f^{\vee}$ identifies $\tau_{\leq 2^{n-1}}(\wt{M}_{\{t\}\cdot\alpha,\gm})$ with $\wt{M}_{\alpha}(1)[1]$
 (recall that motivic homology (considered as a Rost cycle module \cite{RCM}, i.e., over all field extensions) of $\tau_{>i}(N)$ is identified with the diagonal $>i$ part of motivic homology of $N$, and similar for $\tau_{\leq i}(N)$). Hence, we have an exact triangle:
 \begin{equation*}
  \label{tMbeta-tMalpha-tri}
  \wt{M}_{\alpha}(2^{n-1})[2^n]\stackrel{f}{\lrow}\wt{M}_{\{t\}\cdot\alpha,\gm}\stackrel{f^{\vee}}{\lrow}\wt{M}_{\alpha}(1)[1]\stackrel{\ffi}{\lrow}\wt{M}_{\alpha}(2^{n-1})[2^n+1].
 \end{equation*}

 It remains to observe that $\wt{M}_{\alpha}$ disappears in the etale topology and
 apply the following useful fact (Proposition \ref{et-comp-fin-diag}) to conclude
 that $\ffi$ is nilpotent. Hence, $\wt{M}_{\alpha}\in\la\wt{M}_{\{t\}\cdot\alpha,\gm}\ra$.
 
 Proposition \ref{Gm-residue} is proven.
 \Qed
\end{proof}

\begin{prop}
 \label{et-comp-fin-diag}
 Let $M\in\dmgmkF{\Lambda}$. Then
 \begin{itemize}
  \item[$(1)$] $M_{\et}=0$ $\Leftrightarrow$ $M$ has only finitely many non-zero homology objects in the homotopy $t$-structure;
  \item[$(2)$] If $M_{\et}=0$ and $M\stackrel{\ffi}{\lrow}M(a)[b]$ is some map with $a\neq b$, then $\ffi$ is nilpotent.
 \end{itemize}
\end{prop}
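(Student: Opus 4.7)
The plan is to establish part (1) via Voevodsky's proof of the Beilinson--Lichtenbaum conjecture, and then deduce part (2) by iterating $\varphi$ and running off the finite set of non-vanishing diagonals of motivic cohomology of $M\otimes M^\vee$.

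For the forward direction of (1), assume $M_\et=0$. Beilinson--Lichtenbaum gives an isomorphism $\moco{j}{i}(M;\ff_2)\cong H^j_{\et}(M;\ff_2(i))$ for $j\leq i$, so the latter vanishing (for all $j$) forces $\moco{j}{i}(M;\ff_2)=0$ throughout this range. On the other hand, compactness of $M$ supplies a ``ceiling'': writing $M$ as a retract of a finite iterated extension of Tate-twists/shifts of motives of smooth projective varieties of bounded dimension, the non-vanishing locus of $\moco{*}{*'}(M;\ff_2)$ lies in a bounded region of the diagonal $j-i$. Combining the floor and the ceiling confines $\moco{*}{*'}(M;\ff_2)$ to finitely many diagonals $j-i=\op{const}$. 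Under the standard identification of the heart of the homotopy $t$-structure with Rost cycle modules (\cite{RCM}, cf.\ \cite{Deg}), where each non-zero homology object of $M$ contributes exactly one non-vanishing diagonal to its motivic cohomology, this finiteness is equivalent to only finitely many homology objects of $M$ being non-zero. The reverse implication is cleaner: $\op{Cone}(\tau)=\wt{M}_{\{\}}$ is étale-trivial with a single non-zero homology, and the property of having finitely many non-zero homology objects is stable under triangles and retracts, matching the description $\la\op{Cone}(\tau)\ra=\ker(\text{\'etale realisation})$ already recorded in the paper.

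For part (2), the composition $\varphi^n:M\row M(na)[nb]$ is classified by an element of
\[
\Hom_{\dmgmkF{\Lambda}}(M,M(na)[nb])\;=\;\moco{nb}{na}(M\otimes M^\vee;\Lambda).
\]
Since $M_\et=0$ forces $(M\otimes M^\vee)_\et=0$, part (1) applied to $M\otimes M^\vee$ confines its non-zero motivic cohomology to finitely many diagonals $j-i=\op{const}$. The class of $\varphi^n$ sits on the diagonal $nb-na=n(b-a)$; because $a\neq b$, for $|n|$ sufficiently large this integer escapes the prescribed finite set, so $\varphi^n=0$ and $\varphi$ is nilpotent.

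The hard part is the forward direction of (1): Beilinson--Lichtenbaum supplies only the vanishing ``floor'' for $j\leq i$, and one must pair it with a compactness-based ``ceiling'' before transporting the resulting ``finitely many diagonals'' statement across the homotopy-$t$-structure / motivic-cohomology dictionary. Once (1) is in hand, part (2) is formal, as above.
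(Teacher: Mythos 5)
Your part (2) and the implication ``$M_{\et}=0\Rightarrow$ finitely many non-zero homology objects'' are essentially the paper's argument: the Beilinson--Lichtenbaum ``floor'' plus the compactness ``ceiling'' confine the motivic cohomology of $M$ (equivalently, the motivic homology of $M^{\vee}$, viewed as cycle modules over all finitely generated extensions) to finitely many diagonals, and then $\ffi^n$ lives in a bidegree whose diagonal $n(b-a)$ eventually leaves this finite set, so $\ffi$ is nilpotent. That part is fine.

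The gap is in what you call the reverse implication of (1), i.e.\ ``finitely many non-zero homology objects $\Rightarrow M_{\et}=0$''. The argument you give -- $\op{Cone}(\tau)$ is \'etale-trivial with one homology object, the finiteness property is stable under triangles and retracts, and $\la\op{Cone}(\tau)\ra=\kker(\text{\'etale realisation})$ -- runs in the wrong direction: at best it shows that every object of $\la\op{Cone}(\tau)\ra$ has finitely many non-zero homology objects, which is the forward implication again (and even for that, ``triangles and retracts'' is not enough, since membership in a thick \emph{tensor} ideal also involves tensoring with arbitrary compact objects, a step your stability claim does not cover). It does not show that an $M$ with finitely many homology objects lies in $\kker(\text{\'etale realisation})$. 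Moreover, the identity $\la\op{Cone}(\tau)\ra=\kker(\text{\'etale realisation})$ is justified in the paper's Example precisely by the finitely-many-diagonals property you are trying to prove, so invoking it here is uncomfortably close to circular. The missing argument is the one the paper uses: if only finitely many diagonals are non-zero, then the diagonals with numbers $\ll 0$ vanish; in that range Beilinson--Lichtenbaum identifies $\moco{j}{i}(M_E;\ff_2)$ with \'etale cohomology, and \'etale cohomology with $\ff_2$-coefficients is $\tau$-periodic (Tate twists are trivialised), so the vanishing in low diagonals propagates to all bidegrees and all finitely generated $E$; since \'etale cohomology over such extensions detects the vanishing of the compact object $M_{\et}$, one concludes $M_{\et}=0$. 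With this step supplied, your write-up matches the paper's proof.
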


\begin{proof}
 (1) 
 Homology objects of $M$ with respect to the homotopy $t$-structure are Rost cycle modules given by diagonals in motivic homology of $M$ (\cite{Deg}). Since $M$ is compact, we may 
 substitute it by the motivic cohomology of $M^{\vee}$ instead. Since $M^{\vee}$ is compact, it has no motivic cohomology with numbers $>d$, for some $d$. Hence, we will have only finitely many such non-zero diagonals if and only if diagonals with numbers $<<0$ are trivial. By the Beilinson-Lichtenbaum conjecture (\cite[Theorem 6.18]{Vo-Z-l}), the latter is equivalent to the fact that $M_{\et}=0$.
 
 \noindent
 (2) If $M_{\et}=0$, then so is $(M\otimes M^{\vee})_{\et}$. Hence, by (1), this object has only finitely many diagonals in motivic homology. Since $\ffi$ ``moves'' in non-diagonal direction, some power of it will be zero
 (as it is represented by homology class $T(-ra)[-rb]\row M\otimes M^{\vee}$). 
 \Qed
\end{proof}

\section{Invariants of prime ideals}
 \label{three}

Now we can introduce a certain coordinate system on the Balmer spectrum of geometric motives.
With the help of {\it extended reduced Rost motives} we will define some invariants of prime ideals of Voevodsky category which will allow us to study specialisation relation among them. Our invariants will take values in the subsets of $\op{Pure}$.

For an object $A$ of $\op{DM}_{gm}(k;\ff_2)$, we will denote as
$A^{\perp}$ the collection of objects $B$ of this category, such that $A\otimes B=0$.
For $(E,\alpha)\in\op{Pure}$, we will denote as $\wt{M}_{\alpha,Y}$ the respective extended reduced Rost motive.

\begin{defi}
Let ${\frak a}\subset\op{DM}_{gm}(k;\ff_2)$ be a prime ideal. Define:
\begin{equation*}
 \begin{split}
  &{\cal G}({\frak a})=\{(E,\alpha)\in\op{Pure}\,|\,(\wt{M}_{\alpha,Y})^{\perp}\subset{\frak a}\};\\
  &{\cal H}({\frak a})=\{(E,\alpha)\in\op{Pure}\,|\,
  \wt{M}_{\alpha,Y}\in{\frak a}\}.
 \end{split}
\end{equation*}
\end{defi}

Note that since ${\frak a}$ is prime, ${\cal G}({\frak a})\cup{\cal H}({\frak a})=\op{Pure}$.

Let us compute these invariants for isotropic points. Such points are parametrised by the
$2$-equivalence classes of field extension, where the point corresponding to the extension $F/k$ is denoted
${\frak a}_{F}$ (see \cite[Theorem 5.13]{INCHKm}).
Such an ideal is the pre-image under the natural restriction map $\op{DM}_{gm}(k;\ff_2)\row\op{DM}_{gm}(\wt{F};\ff_2)$ of the thick tensor ideal generated by motives of all $2$-anisotropic varieties. Here $\wt{F}=F(\pp^{\infty})$ is the {\it flexible closure} of the
field $F$.

\begin{prop}
 \label{Ha-iso-descr}
 Let $F/k$ be some field extension, $(E,\alpha)\in\op{Pure}$ and $\wt{M}_{\alpha,Y}$ be the respective
 extended reduced Rost motive. Let $P$ be a smooth projective model for $E/k$. Then
 $$
 \wt{M}_{\alpha,Y}\in{\frak a}_{F}\hspace{2mm}
 \Leftrightarrow\hspace{2mm}
 \text{either \ }a)\,\,\alpha_{F(P)}=0,\text{ or \ }b)\,\, P_F\text{ is anisotropic}.
 $$
\end{prop}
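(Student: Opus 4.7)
The plan is to reformulate the condition $\wt{M}_{\alpha,Y}\in{\frak a}_F$ as the condition that $\wt{M}_{\alpha,Y}|_{\wt F}$ lies in the $2$-anisotropic ideal of $\dmgmEF{\wt F}{\ff_2}$, and to treat the two directions separately.

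For the implication $(\Leftarrow)$, case (a) is immediate: since $Y$ is birational to $P$, we have $F(Y)=F(P)$, so $\alpha_{F(P)}=0$ yields $\wt{M}_{\alpha,Y}|_F=0$ by Proposition \ref{triv-erRm}. In case (b), $P_{\wt F}$ remains $2$-anisotropic because $\wt F/F$ is purely transcendental and odd-degree $0$-cycles descend along such extensions (by spreading out and specialising). Applying $(\pi_Y)_{\#}$ to the triangle (\ref{reromo-rCech}) and to the defining triangle $\hii_{\alpha/Y}\to T_Y\to\whii_{\alpha/Y}$ expresses $\wt{M}_{\alpha,Y}$ as a successive cone built from $M(Y)$ and from $(\pi_Y)_{\#}(\hii_{\alpha/Y})$, the latter lying --- as the motive of a \v{C}ech simplicial scheme --- in the thick tensor ideal generated by $M(Q_{\alpha,Y})$. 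A Gysin-triangle induction on dimension (using smooth compactifications and resolutions available in characteristic zero) then shows that after base change to $\wt F$ both $M(Y_{\wt F})$ and $M(Q_{\alpha,Y_{\wt F}})$ lie in the anisotropic ideal, since each is built from motives of smooth projective varieties dominating $P_{\wt F}$, which inherit $2$-anisotropy by pushforward of $0$-cycles. Hence $\wt{M}_{\alpha,Y}|_{\wt F}$ lies in the anisotropic ideal.

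For the implication $(\Rightarrow)$, assume $\wt{M}_{\alpha,Y}\in{\frak a}_F$ and suppose that (b) fails, so $P_F$ is $2$-isotropic. Combining an odd-degree $0$-cycle on $P_F$ with correspondences from $P_F$ to any test variety then yields $F\stackrel{2}{\sim}F(P)$ (cf.\ \cite[Theorem 5.13]{INCHKm}), so ${\frak a}_F={\frak a}_{F(P)}$ and $\wt{M}_{\alpha,Y}|_{\wt{F(P)}}$ belongs to the anisotropic ideal of $\dmgmEF{\wt{F(P)}}{\ff_2}$. The tautological embedding $\op{Spec}(F(P))=\op{Spec}(F(Y))\hookrightarrow Y_{F(P)}$ persists, after further base change to $\wt{F(P)}$, as a rational point $s$ of $Y_{\wt{F(P)}}$ with $s^*(\alpha)=\alpha_{\wt{F(P)}}$. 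Proposition \ref{pullback-alpha}, applied over the base $\wt{F(P)}$ to the morphism $s$ (whose proof is base-agnostic, resting on Propositions \ref{reromo-decomp} and \ref{coalg-pullback}), then forces $\wt{M}_{\alpha_{\wt{F(P)}}}\in\la\wt{M}_{\alpha,Y}|_{\wt{F(P)}}\ra$, so the ordinary reduced Rost motive $\wt{M}_{\alpha_{\wt{F(P)}}}$ lies in the anisotropic ideal of the flexible field $\wt{F(P)}$. But if (a) also fails then $\alpha_{F(P)}\neq 0$, and since $\wt{F(P)}/F(P)$ is purely transcendental we have $\alpha_{\wt{F(P)}}\neq 0$. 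This contradicts the fact (proved in the example following diagram (\ref{romo}), via \cite[Corollary 3.3]{Iso}) that no non-trivial reduced Rost motive lies in the isotropic ideal over a flexible field.

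The main obstacle is case (b) of the $(\Leftarrow)$ direction: carrying out the Gysin induction that puts the smooth but non-projective motives $M(Y_{\wt F})$ and $M(Q_{\alpha,Y_{\wt F}})$ in the anisotropic ideal via compactifications by smooth projective varieties dominating $P_{\wt F}$. Everything else is essentially formal given Proposition \ref{pullback-alpha} and the material of Section \ref{two}.
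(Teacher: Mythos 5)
Your $(\Rightarrow)$ direction is essentially correct and is a genuinely different route from the paper's. You use $P_F$ isotropic $\Rightarrow F\stackrel{2}{\sim}F(P)$, then pull $\wt{M}_{\alpha,Y}|_{\wt{F(P)}}$ back along the tautological rational point of $Y_{\wt{F(P)}}$ via Proposition \ref{pullback-alpha} (whose proof is indeed base-field-agnostic in characteristic zero), and conclude by the fact that no non-trivial reduced Rost motive lies in the isotropic ideal of a flexible field. The paper instead never leaves the base $k$: it shows via Lemma \ref{rCss-gen-fib} that $(\whii_{\alpha,Y})_{\wt F}\otimes\whii_R$ is detected on the generic fiber for every anisotropic $R/\wt F$, so the isotropic projector $\Upsilon_{\wt F/\wt F}$ does not annihilate it, and then uses non-$(d)[2d+1]$-periodicity of motivic homology to pass from $\whii_{\alpha,Y}$ to $\wt{M}_{\alpha,Y}$. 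Your version is shorter but leans on the claim of Example 2.1(1) (only sketched in the paper) and on the $2$-equivalence fact; both are standard in this context, so this half stands.

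The genuine gap is exactly where you flag it, in case (b) of $(\Leftarrow)$, and your sketch of it has two defects. First, $\hii_{\alpha/Y}$ is not compact: it lies in the \emph{localizing}, not the thick, tensor ideal generated by $M(Q_{\alpha,Y})$, so your cone description does not directly place the compact object $\wt{M}_{\alpha,Y}|_{\wt F}$ in a thick ideal of geometric motives; the easy repair is to use the octahedron (\ref{eromo}) instead, which exhibits $\wt{M}_{\alpha/Y}$ as an iterated extension of $T_Y$, $T_Y(d)[2d]$ and the summand $M_{\alpha/Y}$ of $M(Q_{\alpha}/Y)$, hence $\wt{M}_{\alpha,Y}$ lies in the thick subcategory generated by $M(Y)$, $M(Y)(d)[2d]$ and $M(Q_{\alpha,Y})$ (alternatively, invoke a Neeman--Thomason type statement). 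Second, the anisotropy of the smooth projective pieces in your Gysin d\'evissage is not explained by ``dominating $P_{\wt F}$'' plus pushforward of $0$-cycles: the boundary strata of an SNC compactification $\bar Y\supset Y$ do not dominate $P$. The correct mechanism is that anisotropy of $P_{\wt F}$ passes to the smooth proper birational model $\bar Y_{\wt F}$ by Lang--Nishimura (smoothness and properness are both used), after which every closed subvariety of $\bar Y_{\wt F}$ --- in particular every boundary stratum --- and every variety mapping to it (a compactification of $Q_{\alpha,Y}$ chosen over $\bar Y$, and its strata) is anisotropic, since images of odd-degree closed points have odd degree. With these repairs your d\'evissage does close case (b), but as submitted the central step is asserted rather than proved. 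Note that the paper disposes of (b) without any compactification: since $P$ is proper and birational to $Y$, $\whii_P$ vanishes at every point of $Y$, so $(\wt{M}_{\alpha,Y})_{\wt F}\otimes(\whii_P)_{\wt F}=0$, i.e. $(\wt{M}_{\alpha,Y})_{\wt F}$ is a module over $\hii_{P_{\wt F}}$, which is killed isotropically because $P_{\wt F}$ is anisotropic, by \cite[Remark 2.8]{Iso}.
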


\begin{proof}
 $(\low)$ If $\alpha_{F(P)}=0$, then by Proposition 
 \ref{triv-erRm}, $(\wt{M}_{\alpha,Y})_{\wt{F}}=0$.
 Hence, $\wt{M}_{\alpha,Y}\in{\frak a}_{F}$.\\
 If $P_F$ is anisotropic, then so is $P_{\wt{F}}$. 
 But $(\whii_{P})_{\wt{F}}$ vanishes at every point of
 $Y_{\wt{F}}$. So, $(\wt{M}_{\alpha,Y})_{\wt{F}}\otimes
 (\whii_{P})_{\wt{F}}=0$. Thus, $\wt{M}_{\alpha,Y}\in{\frak a}_{F}$ (see \cite[Remark 2.8]{Iso}).\\
 $(\row)$ If $P_F$ is isotropic, then, for any anisotropic variety $R/\wt{F}$, its restriction $R_{\wt{F}(P)}$ is still anisotropic. If also $\alpha_{F(P)}\neq 0$, then
 $\whii_{\alpha_{\wt{F}(P)}}\otimes\whii_{R_{\wt{F}(P)}}\neq 0$. Note that $(\whii_{\alpha,Y})_{\wt{F}}\otimes\whii_R$ is just the reduced motive of the \v{C}ech
 simplicial scheme corresponding to the smooth morphism
 $Q_{\alpha_{\wt{F}}}\coprod(R\times Y_{\wt{F}})\row Y_{\wt{F}}$, whose generic fiber is exactly
 $\whii_{\alpha_{\wt{F}(P)}}\otimes\whii_{R_{\wt{F}(P)}}$. But the triviality of the motive of a \v{C}ech simplicial scheme (over a base) is equivalent to the triviality of the generic fiber of it:
 
 \begin{lem}
  \label{rCss-gen-fib}
  Let $Q\row Y$ be a smooth morphism, with $Y$ smooth connected, with the generic fiber $Q_{\eta}\row\eta$. Then \ 
  $
  \whii_{Q/Y}=0\hspace{2mm}\Leftrightarrow\hspace{2mm}
  \whii_{Q_{\eta}/\eta}=0.
  $
 \end{lem}

 \begin{proof}
  $(\low)$ If $\whii_{Q_{\eta}/\eta}=0$, then the generic fiber is isotropic, so all fibers are isotropic. Hence,
  the projection $M(Q)\row M(Y)$ has a splitting and so, $\whii_{Q/Y}=0$.\\
  $(\row)$ If $\whii_{Q/Y}=0$, then the projection
  $\hii_{Q/Y}\row M(Y)$ has a splitting $s$ (an inverse).
  Then the composition $M(\eta)\row M(Y)\stackrel{s}{\row}\hii_{Q/Y}$ factors through the fiber $\hii_{Q_{\eta}/\eta}$ over the generic point, since
  $\op{Hom}(M(\eta),\hii_{Q_D/D}(1)[2])=0$, for smooth divisors $D$ (as motivic homology of smooth simplicial schemes are zero below the zeroth diagonal). Thus, we get the splitting of the generic fiber and so, $\whii_{Q_{\eta}/\eta}=0$.
  \Qed
 \end{proof}
 
 Lemma \ref{rCss-gen-fib} shows that $(\whii_{\alpha,Y})_{\wt{F}}\otimes\whii_{R}\neq 0$,
 for any anisotropic $R$ over $\wt{F}$. Hence, the isotropic
 projector $\Upsilon_{\wt{F}/\wt{F}}$ (\cite[Definition 2.4]{Iso}) doesn't annihilate our motive of the reduced \v{C}ech simplicial scheme:
 $(\whii_{\alpha,Y})_{\wt{F}}\otimes\Upsilon_{\wt{F}/\wt{F}}\neq 0$. In particular, the motivic homology of the latter object is non-zero (over some extension of $\wt{F}$). But such motivic homology can't be $(d)[2d+1]$
 periodic (if non-zero), since it vanishes below the zeroth diagonal. Hence, $(\wt{M}_{\alpha,Y})_{\wt{F}}\otimes\Upsilon_{\wt{F}/\wt{F}}$ also has non-zero homology and so, is non-zero. Thus, $\wt{M}_{\alpha,Y}\not\in{\frak a}_{F}$.
 \Qed
\end{proof}

\begin{prop}
 \label{GH-iso-empty}
 Let $F/k$ be any extension. Then
 \begin{equation*}
  \begin{split}
   &{\cal G}({\frak a}_{F})\cap{\cal H}({\frak a}_{F})=\emptyset\hspace{5mm}\text{and}\\
   &{\cal H}({\frak a}_{F})=\{(E=k(P),\alpha)\in\op{Pure}\,|\,\text{ either }\alpha_{F(P)}=0,\text{ or }P_F\text{ is anisotropic}\}.
  \end{split}
 \end{equation*}
\end{prop}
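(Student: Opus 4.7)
The second assertion is immediate from Proposition \ref{Ha-iso-descr}, since by definition ${\cal H}({\frak a}_F)$ collects the pairs $(E,\alpha)$ with $\wt{M}_{\alpha,Y}\in {\frak a}_F$. For the first assertion, suppose $(E,\alpha)\in {\cal G}({\frak a}_F)\cap {\cal H}({\frak a}_F)$. Then $\wt{M}_{\alpha,Y}\in {\frak a}_F$, and since $(M_{\alpha,Y})_E=M_\alpha\in\la M_\alpha\ra$, Corollary \ref{orthog-tM-gen-pt} yields $M_{\alpha,Y}\in \wt{M}_{\alpha,Y}^{\perp}\subset {\frak a}_F$ as well. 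The plan is to exhibit an extension $L/F$ over which $M_{\alpha,Y}|_L$ visibly carries $T$ as a direct summand, contradicting $\psi_L(M_{\alpha,Y})=0$.

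Take $L=F(P)$. Granting the inclusion ${\frak a}_F\subset {\frak a}_L$ (discussed below), we get $\wt{M}_{\alpha,Y},M_{\alpha,Y}\in {\frak a}_L$. The variety $P_L=P_{F(P)}$ carries the tautological rational point induced by $E=k(P)\subset F(P)=L$, hence is not anisotropic; Proposition \ref{Ha-iso-descr} applied at level $L$ to $\wt{M}_{\alpha,Y}\in {\frak a}_L$ then forces $\alpha_{L(P)}=0$. Since $Y$ is a neighbourhood of $\op{Spec}(E)$ in $P$, the field $L(P)$ is the function field of $Y_L$, so the unramified symbol $\alpha$ vanishes on the generic point of $Y_L$ and (after possibly shrinking, using $Y$-independence analogous to Corollary \ref{ideal-indep-of-Y}) on all of $Y_L$. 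The relative Rost motive therefore splits, $M_{\alpha/Y_L}=T_{Y_L}\oplus T_{Y_L}(d)[2d]$, and pushing forward, $M_{\alpha,Y}|_L=M(Y_L)\oplus M(Y_L)(d)[2d]$. The embedding $E\subset L$ furnishes a $k$-morphism $\op{Spec}(L)\to\op{Spec}(E)\to Y$ and so an $L$-rational point of $Y_L$, whence $T$ is a direct summand of $M(Y_L)$ and hence of $M_{\alpha,Y}|_L$. Consequently $\Upsilon_{\wt{L}/\wt{L}}$ is a direct summand of $M_{\alpha,Y}|_{\wt{L}}\otimes\Upsilon_{\wt{L}/\wt{L}}=\psi_L(M_{\alpha,Y})$, contradicting $M_{\alpha,Y}\in {\frak a}_L$.

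The main technical point I would need to verify cleanly is the inclusion ${\frak a}_F\subset {\frak a}_L$ for $F\subset L$. This should follow from base-change compatibility of the isotropic projector: since the class of $\wt{L}$-anisotropic varieties includes the nonzero $\wt{L}$-base changes of $\wt{F}$-anisotropic ones, the image of $\Upsilon_{\wt{L}/\wt{L}}$ sits inside that of $\Upsilon_{\wt{F}/\wt{F}}|_{\wt{L}}$, giving $\Upsilon_{\wt{F}/\wt{F}}|_{\wt{L}}\otimes\Upsilon_{\wt{L}/\wt{L}}=\Upsilon_{\wt{L}/\wt{L}}$. Hence $\psi_F(N)=0$ implies $\psi_L(N)=N_{\wt{L}}\otimes\Upsilon_{\wt{L}/\wt{L}}=0$, i.e.\ $N\in {\frak a}_L$, as needed.
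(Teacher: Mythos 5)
Your reduction of the second assertion to Proposition \ref{Ha-iso-descr} is fine and is exactly what the paper does. The proof of the first assertion, however, has two genuine gaps. First, the claim $M_{\alpha,Y}\in\wt{M}_{\alpha,Y}^{\perp}$ rests on the identification $(M_{\alpha,Y})_E=M_\alpha$, which is false: restriction to $E$ is base change, so $(M_{\alpha,Y})_E=(\pi_{Y_E})_{\#}(M_{\alpha/Y}|_{Y_E})$ is built from all of $Y_E=Y\times_k\op{Spec}(E)$, not from the generic fibre alone. Concretely, $M_{\alpha,Y}\otimes\wt{M}_{\alpha,Y}=(\pi_{Y\times Y})_{\#}(M_{\alpha/Y}\boxtimes\wt{M}_{\alpha/Y})$, and at a general point $(y_1,y_2)$ of $Y\times Y$ the two restrictions of $\alpha$ are unrelated symbols over the compositum of residue fields, so the fibrewise orthogonality that holds over the diagonal is lost; there is no reason for this object to vanish, and Corollary \ref{orthog-tM-gen-pt} does not give condition $(3)$ for $N=M_{\alpha,Y}$. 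Second, and more decisively, the ``technical point'' ${\frak a}_F\subset{\frak a}_L$ for $L=F(P)$ is false in general, and it fails exactly in the case you need it: if $P_F$ is anisotropic then $M(P)\in{\frak a}_F$, while $P_{F(P)}$ has a rational point, so a Tate motive splits off and $M(P)\notin{\frak a}_{F(P)}$. Your justification inverts the actual behaviour of anisotropy under extension of scalars (an $\wt{F}$-anisotropic variety may well become isotropic over $\wt{L}$, so the image of $\Upsilon_{\wt{L}/\wt{L}}$ is not contained in that of $\Upsilon_{\wt{F}/\wt{F}}|_{\wt{L}}$ in the required sense). Indeed, Theorem \ref{iso-closed} of the paper says there are no inclusions among distinct isotropic points, so no argument relying on ${\frak a}_F\subset{\frak a}_{F(P)}$ can work when $F\not\stackrel{2}{\sim}F(P)$.

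For comparison, the paper argues directly from the description of ${\cal H}({\frak a}_F)$: in case $\alpha_{F(P)}=0$ it produces a smooth $Q$ with $k(Q)\subset F$ and $\alpha_{k(P\times Q)}=0$, so that $M(Q)\in(\wt{M}_{\alpha,Y})^{\perp}$ while $M(Q)\notin{\frak a}_F$ (a Tate motive splits off over $F$); in case $P_F$ anisotropic it invokes \cite[Theorem 2.5]{VPS} to get a pure symbol $\beta$ over $\wt{k}$ controlling the isotropy of $P$, and checks $\wt{M}_{\beta,Y'}\otimes\wt{M}_{\alpha,Y}=0$ but $\wt{M}_{\beta,Y'}\notin{\frak a}_F$ via Proposition \ref{Ha-iso-descr}. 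In both cases one exhibits an explicit object of $(\wt{M}_{\alpha,Y})^{\perp}$ outside ${\frak a}_F$, showing $(E,\alpha)\notin{\cal G}({\frak a}_F)$; some such external input (notably the symbol $\beta$ from \cite{VPS}) seems unavoidable, and your proposal does not supply a substitute for it.
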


\begin{proof}
 The description of ${\cal H}({\frak a}_{F})$ follows from Proposition \ref{Ha-iso-descr}.
 
 If $\alpha_{F(P)}=0$, then there exists a smooth $Q$, such that $\alpha_{k(P\times Q)}=0$ and $k(Q)$ is a subfield of $F$. In particular, $Q_F$ is isotropic.
 Since $\wt{M}_{\alpha_{k(P\times Q)}}=0$, by Proposition
 \ref{triv-erRm}, $\wt{M}_{\alpha,Y}|_{k(Q)}=0$, which is equivalent to: $M(Q)\otimes\wt{M}_{\alpha,Y}=0$. At the   
 same time, $M(Q)\not\in{\frak a}_{F}$, since a Tate-motive splits off from it over $F$. Hence, $(E,\alpha)\not\in{\cal G}({\frak a}_{F})$. 
 
 Suppose, $P_F$ is anisotropic. By \cite[Theorem 2.3]{VPS}, there exists a non-zero pure symbol $\beta\in k^M_*(\wt{k})$, such that, for any extension $L/k$, 
 $P_L$ is isotropic $\Leftrightarrow$ $\beta_{\wt{L}}=0$.
 In particular, $\beta_{\wt{k}(P)}=0$, but $\beta_{\wt{F}}\neq 0$. Then $\wt{M}_{\beta,Y'}|_{\wt{k}(P)}=0$, by Proposition \ref{triv-erRm}. Since $k(Y)=k(P)$
 and $k(Q_{\alpha})$ is an extension of it, while $\wt{M}_{\alpha,Y}$ is an extension of a direct summand of $M(Q_{\alpha})$ and two (shifted) copies of $M(Y)$, we get: 
 $\wt{M}_{\beta,Y'}\otimes\wt{M}_{\alpha,Y}=0$. At the same time, $\wt{M}_{\beta,Y'}\not\in{\frak a}_{F}$,
 by Proposition \ref{Ha-iso-descr} (note that the respective smooth model $P'$ is rational and so, isotropic over $k$). Hence, $(E,\alpha)\not\in{\cal G}({\frak a}_{F})$.
  Thus, ${\cal G}({\frak a}_{F})\cap{\cal H}({\frak a}_{F})=\emptyset$.
 \Qed
\end{proof}

We may describe the isotropic ideals completely in terms of their ${\cal G}$-invariants.

\begin{prop}
 \label{iso-ideal-descr}
 Let $F/k$ be a field extension. Then
 $$
 {\frak a}_{F}=\bigcup_{(E,\alpha)\in{\cal G}({\frak a}_{F})}(\wt{M}_{\alpha,Y})^{\perp}.
 $$
\end{prop}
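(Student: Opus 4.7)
The plan is to prove the two inclusions separately. The inclusion $\supseteq$ is immediate: for each $(E,\alpha)\in{\cal G}({\frak a}_F)$, by the very definition of ${\cal G}({\frak a}_F)$, the orthogonal $(\wt{M}_{\alpha,Y})^\perp$ is already contained in ${\frak a}_F$, so the union is as well.

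For the non-trivial inclusion $\subseteq$, I take a compact $N\in{\frak a}_F$ and aim to produce a single pair $(E,\alpha)\in{\cal G}({\frak a}_F)$ with $N\in(\wt{M}_{\alpha,Y})^\perp$. Using the $2$-equivalence $F\stackrel{2}{\sim}\wt{F}$ and the description of ${\frak a}_F$ as the pre-image of the ideal generated by $2$-anisotropic motives over $\wt{F}$, the compactness of $N_{\wt{F}}$ gives $N_{\wt{F}}\in\la M(R_1),\ldots,M(R_m)\ra$ for finitely many $2$-anisotropic smooth projective $R_i/\wt{F}$. For each $R_i$, \cite[Theorem 2.5]{VPS} provides a non-zero splitting pure symbol $\beta_i\in K^M_*(\wt{F})/2$ with $\beta_{i,\wt{F}(R_i)}=0$; the argument from Proposition \ref{Ha-iso-descr} (Lemma \ref{rCss-gen-fib} applied to $Q_{\beta_i}\times R_i\to R_i$) yields $\whii_{\beta_i}\otimes M(R_i)=0$ and hence $M(R_i)\otimes\wt{M}_{\beta_i}=0$ over $\wt{F}$.

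Next, I descend to a finitely generated extension of $k$. Since $\wt{F}=F(\pp^\infty)$, all $\beta_i$'s are defined over $F(t_1,\ldots,t_n)$ for some $n$. Taking $P=\pp^n_k$ gives $F(P)=F(t_1,\ldots,t_n)$ with $P_F$ manifestly isotropic, making $(k(P),\alpha)$ a candidate element of ${\cal G}({\frak a}_F)$ for any suitable pure symbol $\alpha\in K^M_*(k(P))/2$ with $\alpha_{F(P)}\neq 0$. Once such an $\alpha$ is produced (with $\beta_i|\alpha_{F(P)}$ for each $i$), Proposition \ref{divisible-Mt} gives $\wt{M}_{\alpha,Y}\in\la\wt{M}_{\beta_i,Y_i}\ra$ over $F(P)$, so $(\wt{M}_{\beta_i,Y_i})^\perp\subset(\wt{M}_{\alpha,Y})^\perp$; since each $M(R_i)$ is in the former orthogonal, the thick tensor ideal $(\wt{M}_{\alpha,Y})^\perp$ contains all the $M(R_i)_E$, hence contains $N_E$, and Corollary \ref{orthog-tM-gen-pt} gives $N\otimes\wt{M}_{\alpha,Y}=0$ over $k$.

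The main obstacle, and the genuinely substantive step, is the construction of a single pure symbol $\alpha$ dominating all the $\beta_i$'s. The naive candidate $\alpha=\beta_1\cdots\beta_m$ may vanish in $K^M_*(F(P))/2$ due to Steinberg relations (for instance, if two $\beta_i$'s are $\{a\}$ and $\{-a\}$), in which case no single pure symbol over any extension is divisible by all the $\beta_i$'s simultaneously. To work around this I would enlarge $P$ to include additional generic parameters $s_1,\ldots,s_m$ (taking $P=\pp^{n+m}_k$), and use a symbol built from $\{s_j\}\cdot\beta_j$-type factors—whose products retain non-triviality by a residue argument in the spirit of Lemma \ref{Spr}, while each original $\beta_j$ is recovered as an iterated residue so that Propositions \ref{divisible-Mt} and \ref{tM-residue} still give the required containment $(\wt{M}_{\beta_j,Y_j})^\perp\subset(\wt{M}_{\alpha,Y})^\perp$. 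Verifying that such a construction simultaneously achieves $\alpha_{F(P)}\neq 0$ and the annihilation of all $M(R_i)\otimes\wt{M}_\alpha$ is the principal technical difficulty of the proof.
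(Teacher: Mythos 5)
Your $\supseteq$ direction and the general skeleton (express $N_{\wt{F}}$ through finitely many anisotropic varieties, attach pure symbols, conclude with Corollary \ref{orthog-tM-gen-pt}) are consistent with the paper, but the proof has a genuine gap exactly at the step you yourself flag, and the repair you sketch does not work. Since the right-hand side is a union, you must exhibit a \emph{single} pair $(E,\alpha)$ with $N\otimes\wt{M}_{\alpha,Y}=0$, and you try to build $\alpha$ by merging the per-variety splitting symbols $\beta_i$ of \cite[Theorem 2.5]{VPS}. Mod $2$, Milnor K-theory is commutative, so any symbol assembled from $\{s_j\}\cdot\beta_j$-type factors equals $\{s_1,\dots,s_m\}\cdot\beta_1\cdots\beta_m$ over the enlarged field; if $\beta_1\cdots\beta_m=0$ over $\wt{F}$ (e.g. $\beta_1=\{a\}$, $\beta_2=\{-a\}$), this vanishes no matter how many generic parameters you adjoin, so in precisely the problematic case you identify your construction produces the zero symbol and the residue argument cannot rescue it. The paper's proof avoids merging altogether: it applies \cite[Theorem 2.3]{VPS} to the \emph{single} variety $Q=\coprod_i Q_i$ (the disjoint union of the anisotropic varieties occurring in the presentation of $U_{\wt{F}}$), obtaining one pure symbol $\beta$ such that $Q$ embeds into the Pfister quadric $Q_{\beta}$; hence $U$ is expressible in terms of the Rost motive $M_{\beta}$, and $M_{\beta}\otimes\wt{M}_{\beta}=0$ immediately gives the required orthogonality, while $\beta_{\wt{\wt{F}}}\neq 0$ together with isotropy of the model over $F$ puts the pair into ${\cal G}({\frak a}_F)$ via Propositions \ref{Ha-iso-descr} and \ref{GH-iso-empty}. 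This use of the disjoint-union symbol is the key idea missing from your argument.

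A second, smaller defect is the handling of fields of definition. The pairs in $\op{Pure}$ require $E/k$ finitely generated, the objects $\wt{M}_{\beta_i,Y_i}$ and Proposition \ref{divisible-Mt} only make sense for symbols over such fields unramified on a common neighbourhood, and Corollary \ref{orthog-tM-gen-pt} needs the vanishing $N_E\otimes\wt{M}_{\alpha}=0$ at the generic point $E$ of $Y$ — vanishing over $\wt{F}$ does not descend to $E$ for free. Your symbols $\beta_i$ live over $\wt{F}$, while your candidate field $k(P)=k(t_1,\dots,t_n)$ does not contain them. The paper addresses this by observing that the symbol, the embedding into $Q_{\beta}$ and the expression of $U$ are all already defined over a finitely generated extension $k'(P)$ with $k'/k$ purely transcendental and $k(P)\subset F$, so the annihilation holds at the generic fiber and $(k'(P),\beta)\in{\cal G}({\frak a}_F)$ because $P_F$ is isotropic and $\beta_{F(A)(P)}\neq 0$.
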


\begin{proof}
 The inclusion $\supset$ follows by the definition of ${\cal G}$. Conversely,
 let $U\in{\frak a}_{F}$, then $U_{\wt{F}}$ is expressible in terms of finitely many motives of anisotropic varieties using cones and direct summands.
 All the varieties and maps involved are defined over some finitely generated extension. So, there exists
 a smooth projective variety $P/k$, such that $k(P)\subset F$, and smooth projective varieties
 $Q_1,\ldots,Q_r$ over $\wt{k}(P)$, such that $Q_i|_{\wt{F}}$ are anisotropic and $U_{\wt{k}(P)}$ is expressed in terms of $Q_i$s. Let 
 $\displaystyle Q=\coprod_{i=1}^r Q_i$. By \cite[Theorem 2.3]{VPS}, there exists a pure symbol $\beta\in k^M_*(\wt{\wt{k}}(P))$ describing the isotropy of $Q$.
 In particular, $\beta_{\wt{\wt{F}}}\neq 0$ and 
 $Q_{\wt{\wt{k}}(P)}$ is a subvariety of $Q_{\beta}$. The  
 latter fact implies that $U_{\wt{\wt{k}}(P)}$ is expressible in terms of the Rost motive $M_{\beta}$. 
 In reality, $\beta$ is defined and the above holds already over some finitely generated extension $k'(P)$
 of $k$,
 where $k'=k(A)$ is purely transcendental. 
 Since $M_{\beta}\otimes\wt{M}_{\beta}=0$, the tensor product $U\otimes$ annihilates the generic fiber of 
 $\wt{M}_{\beta,Y}$. Hence, by Corollary \ref{orthog-tM-gen-pt}, $U\in(\wt{M}_{\beta,Y})^{\perp}$. We have: $P_{F}$ is isotropic
 and so, $\beta_{F(A)(P)}\neq 0$. Thus,
 $(k'(P),\beta)\in{\cal G}({\frak a}_{F})$, by Proposition \ref{GH-iso-empty}. The inclusion $\subset$
 follows.
 \Qed
\end{proof}

This permits to describe the points of the Balmer spectrum specialising into a given isotropic one in terms of the ${\cal G}$-invariant.

\begin{cor}
 \label{incl-iso-pts}
 For any prime ideal ${\frak a}$ and an isotropic ideal 
 ${\frak a}_{F}$, we have:
 $$
 {\frak a}_{F}\subset{\frak a}
 \hspace{2mm}\Leftrightarrow\hspace{2mm}
 {\cal G}({\frak a}_{F})\subset{\cal G}({\frak a}).
 $$
\end{cor}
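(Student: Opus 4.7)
The plan is to obtain the corollary essentially for free by combining the definition of ${\cal G}$ with the explicit description of isotropic ideals in Proposition \ref{iso-ideal-descr}. No fresh geometric input is needed.

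For the forward direction $(\Rightarrow)$: Assume ${\frak a}_F \subset {\frak a}$ and take $(E,\alpha) \in {\cal G}({\frak a}_F)$. By definition this means $(\wt{M}_{\alpha,Y})^{\perp} \subset {\frak a}_F$, and chaining with the assumed containment gives $(\wt{M}_{\alpha,Y})^{\perp} \subset {\frak a}$, i.e.\ $(E,\alpha) \in {\cal G}({\frak a})$. This direction is purely formal and holds for any prime ${\frak a}'$ in place of ${\frak a}_F$ — it never uses that ${\frak a}_F$ is an isotropic point.

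For the reverse direction $(\Leftarrow)$: Assume ${\cal G}({\frak a}_F) \subset {\cal G}({\frak a})$. By Proposition \ref{iso-ideal-descr},
$$
{\frak a}_F = \bigcup_{(E,\alpha)\in{\cal G}({\frak a}_F)} (\wt{M}_{\alpha,Y})^{\perp}.
$$
For every $(E,\alpha)$ in the indexing set, the hypothesis gives $(E,\alpha) \in {\cal G}({\frak a})$, and so $(\wt{M}_{\alpha,Y})^{\perp} \subset {\frak a}$ by definition of ${\cal G}({\frak a})$. Taking the union yields ${\frak a}_F \subset {\frak a}$.

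There is no real obstacle; the only non-trivial ingredient is Proposition \ref{iso-ideal-descr}, which has already been established. The whole content of the corollary is that the invariant ${\cal G}$ controls containment for isotropic points because such points are \emph{generated} by what ${\cal G}$ records — this is exactly what Proposition \ref{iso-ideal-descr} asserts, so the corollary is a one-line consequence in each direction.
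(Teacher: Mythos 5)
Your proof is correct and is exactly the paper's argument: the forward direction is immediate from the definition of ${\cal G}$, and the reverse direction follows from the description of ${\frak a}_F$ as the union of the orthogonals $(\wt{M}_{\alpha,Y})^{\perp}$ over ${\cal G}({\frak a}_F)$ given in Proposition \ref{iso-ideal-descr}. The paper states this in one line; your write-up just spells out the same two steps.
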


\begin{proof}
 Follows straight from the definition of ${\cal G}$ and Proposition \ref{iso-ideal-descr}.
 \Qed
\end{proof}

As an immediate corollary, we get:

\begin{thm}
 \label{iso-closed}
 All isotropic points ${\frak a}_{F}$ of the Balmer spectrum $\op{Spc}(\op{DM}_{gm}(k,\ff_2))$ are closed.
\end{thm}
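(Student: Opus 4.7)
The plan is to derive the statement as a short formal consequence of Corollary \ref{incl-iso-pts} together with Proposition \ref{GH-iso-empty}. Recall that in the Balmer topology closed points coincide with the minimal primes under inclusion (the Balmer bijection reverses inclusion relative to the classical picture for $\op{Spec}$ of a commutative ring), so the task reduces to the following: for any prime ${\frak a}$ of $\dmgmkF{\ff_2}$ with ${\frak a}\subset{\frak a}_{F}$, one must show ${\frak a}={\frak a}_{F}$.

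First, fix such a prime ${\frak a}\subset{\frak a}_{F}$. Directly from the definitions of ${\cal G}$ and ${\cal H}$, the inclusion of ideals is inherited by both invariants: if $(\wt{M}_{\alpha,Y})^{\perp}\subset{\frak a}$ then $(\wt{M}_{\alpha,Y})^{\perp}\subset{\frak a}_{F}$, and similarly for membership of $\wt{M}_{\alpha,Y}$ itself. So ${\cal G}({\frak a})\subset{\cal G}({\frak a}_{F})$ and ${\cal H}({\frak a})\subset{\cal H}({\frak a}_{F})$.

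Next, combine three inputs: primality of ${\frak a}$ forces ${\cal G}({\frak a})\cup{\cal H}({\frak a})=\op{Pure}$; the analogous covering holds for ${\frak a}_{F}$; and Proposition \ref{GH-iso-empty} tells us that for the isotropic point ${\cal G}({\frak a}_{F})$ and ${\cal H}({\frak a}_{F})$ are moreover disjoint. A quick combinatorial check then forces both of the inclusions above to be equalities: any $(E,\alpha)\in{\cal G}({\frak a}_{F})$ lies in ${\cal G}({\frak a})\cup{\cal H}({\frak a})$, but the second alternative would place it in ${\cal H}({\frak a}_{F})$ as well, contradicting disjointness. Hence ${\cal G}({\frak a})={\cal G}({\frak a}_{F})$.

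In particular ${\cal G}({\frak a}_{F})\subset{\cal G}({\frak a})$, so Corollary \ref{incl-iso-pts} returns the reverse inclusion ${\frak a}_{F}\subset{\frak a}$, and together with the standing assumption this yields equality. No substantial obstacle remains at this stage: all the genuine content has already been paid for upstream, in the description of ${\frak a}_{F}$ by its ${\cal G}$-invariant (Proposition \ref{iso-ideal-descr}, which powers Corollary \ref{incl-iso-pts}) and in the computation of ${\cal H}({\frak a}_{F})$ underlying the disjointness of Proposition \ref{GH-iso-empty}. The theorem itself reduces to the short partition argument above.
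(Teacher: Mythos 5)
Your proposal is correct and follows essentially the same route as the paper: reduce closedness to minimality via Balmer's specialisation-equals-inclusion fact, deduce ${\cal G}({\frak a})={\cal G}({\frak a}_{F})$ from the covering ${\cal G}\cup{\cal H}=\op{Pure}$ for the prime ${\frak a}$ together with the disjointness of Proposition \ref{GH-iso-empty}, and then invoke Corollary \ref{incl-iso-pts} to get the reverse inclusion ${\frak a}_{F}\subset{\frak a}$. The only difference is that you spell out the small partition argument that the paper leaves implicit.
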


\begin{proof}
 Recall, that a point ${\frak a}$ of the Balmer spectrum is a specialisation of a point ${\frak b}$
 if and only if ${\frak a}\subset{\frak b}$ - see \cite[Proposition 2.9]{Bal-1}.
 
 Suppose, ${\frak a}$ is a specialisation of some isotropic point ${\frak a}_{F}$, i.e. ${\frak a}\subset{\frak a}_{F}$. Then ${\cal G}({\frak a})\subset{\cal G}({\frak a}_{F})$ and ${\cal H}({\frak a})\subset{\cal H}({\frak a}_{F})$. But 
 ${\cal G}({\frak a}_{F})\cap{\cal H}({\frak a}_{F})=\emptyset$, by Proposition \ref{GH-iso-empty}, and ${\cal G}({\frak a})\cup{\cal H}({\frak a})=\op{Pure}$. Thus, ${\cal G}({\frak a})={\cal G}({\frak a}_{F})$ and ${\cal H}({\frak a})={\cal H}({\frak a}_{F})$. 
 By Corollary \ref{incl-iso-pts}, ${\frak a}_{F}\subset{\frak a}$. Hence, ${\frak a}={\frak a}_{F}$.
 Thus, the point ${\frak a}_{F}$ is closed.
 \Qed
\end{proof}

\begin{rem}
 In particular, there are no specialisation relations among distinct isotropic points.
\end{rem}

Denote as $\comg(\frak{a})$ the complement 
$\op{Pure}\!\backslash\calg(\frak{a})$. We have the embedding
$\comg(\frak{a})\subset\calh(\frak{a})$.

\begin{thm}
 \label{Gbar-H-LRCM}
 The subsets $\comg(\frak{a})$ and $\calh(\frak{a})$ are ``light Rost cycle submodules`` of $\op{Pure}$. That is, these are stable under: restriction of fields, residues w.r.to DVRs and action of ${\cal O}^*$.
\end{thm}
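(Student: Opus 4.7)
The plan is to deduce each of the six stability assertions (three operations by two invariants) directly from the three transport results of Section \ref{two}. The guiding observation is that each of those results takes a ``move'' $(E,\alpha)\rightsquigarrow(E',\alpha')$ of pure symbols and produces a containment $\wt{M}_{\alpha',Y'}\in\la\wt{M}_{\alpha,Y}\ra$ of extended reduced Rost motives, inside the thick tensor ideal generated by the original. From any such containment two consequences are automatic: first, $\wt{M}_{\alpha,Y}\in\frak{a}$ forces $\wt{M}_{\alpha',Y'}\in\frak{a}$, since $\frak{a}$ is a thick tensor ideal; and second, $\wt{M}_{\alpha,Y}^{\perp}\subset\wt{M}_{\alpha',Y'}^{\perp}$, since anything tensor-killing $\wt{M}_{\alpha,Y}$ also tensor-kills everything in the thick tensor ideal it generates. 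The first consequence will handle $\calh(\frak{a})$, and the second, after rephrasing $(E,\alpha)\in\comg(\frak{a})$ as $\wt{M}_{\alpha,Y}^{\perp}\not\subset\frak{a}$, will handle $\comg(\frak{a})$.

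For the three operations I will invoke the corresponding transport results. Restriction of fields $(E,\alpha)\rightsquigarrow(L,\alpha_L)$ is covered by Proposition \ref{pullback-alpha}, once the extension $L/E$ is realised (after possibly shrinking $Y$) as a morphism $f:Z\row Y$ of smooth neighbourhoods with $\alpha_L=f^*(\alpha)$. The action of $\mathcal{O}^*$, i.e.\ the passage from $(E,\alpha)$ to $(E,\{u\}\cdot\alpha)$ with $u$ a unit and the product still pure, is an instance of the divisibility relation $\alpha\,|\,\{u\}\cdot\alpha$ and is supplied by Proposition \ref{divisible-Mt}. Finally, residues with respect to a DVR with fraction field $K$, residue field $\kappa$, and pure symbol $\beta\in k^M_*(K)/2$ with $\partial(\beta)$ still pure, are exactly the content of Proposition \ref{tM-residue}.

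Assembling: for $\calh(\frak{a})$, given $(E,\alpha)\in\calh(\frak{a})$, applying any one of the three operations yields $(E',\alpha')$ with $\wt{M}_{\alpha',Y'}\in\la\wt{M}_{\alpha,Y}\ra\subset\frak{a}$, hence $(E',\alpha')\in\calh(\frak{a})$. For $\comg(\frak{a})$, given $(E,\alpha)\in\comg(\frak{a})$, the corresponding inclusion of orthogonals gives $\wt{M}_{\alpha',Y'}^{\perp}\not\subset\frak{a}$, hence $(E',\alpha')\in\comg(\frak{a})$.

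I do not expect any genuine obstacle: the substantive work was already carried out in Section \ref{two}, and the present theorem is the formal packaging of Propositions \ref{pullback-alpha}, \ref{divisible-Mt}, and \ref{tM-residue} as statements about the Balmer spectrum. The only minor bookkeeping is ensuring that the ambient smooth neighbourhoods required by those propositions can be chosen compatibly on each side of the containment, but Corollary \ref{ideal-indep-of-Y} renders this inessential, since the thick tensor ideal $\la\wt{M}_{\alpha,Y}\ra$ depends only on $\alpha$.
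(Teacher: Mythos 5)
Your proposal is correct and follows essentially the same route as the paper: the paper's own proof likewise reduces each of the three operations to Propositions \ref{pullback-alpha}, \ref{divisible-Mt} and \ref{tM-residue} (with Corollary \ref{ideal-indep-of-Y} handling the choice of neighbourhoods), obtaining $\wt{M}_{\beta,Z}\in\la\wt{M}_{\alpha,Y}\ra$ and then concluding for $\calh(\frak{a})$ since $\frak{a}$ is a thick tensor ideal and for $\comg(\frak{a})$ since $(-)^{\perp}$ reverses inclusions.
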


\begin{proof}
 Let $\alpha\in K^M_*(E)/2$ and $\beta\in K^M_*(F)/2$ be
 pure symbols (defined over some finitely generated extensions of the base field), where $\beta$ is obtained from $\alpha$ using operations: 1) restriction of fields,
 2) residues with respect to DVRs, and 3) action of ${\cal O}^*$.
 Let $Y$ (respectively $Z$) be smooth open neighbourhoods of $\op{Spec}(E)$ (respectively, $\op{Spec}(F)$), where
 the respective symbols are unramified. It follows from Corollary \ref{ideal-indep-of-Y} and Propositions \ref{pullback-alpha}, \ref{divisible-Mt},
 \ref{tM-residue} that $\wt{M}_{\beta,Z}\in\la\wt{M}_{\alpha,Y}\ra$. Hence, $\alpha\in\calh(\frak{a})\,\Rightarrow\,\beta\in\calh(\frak{a})$. Since the operation $(-)^{\perp}$ reverses unclusions, we also get
 that $\alpha\in\comg(\frak{a})\,\Rightarrow\beta\in\comg(\frak{a})$.
 \Qed
\end{proof}

To any point ${\frak a}$ of the Balmer spectrum
we may assign a $\stackrel{2}{\sim}$-equivalence class
$K({\frak a})$ of field extensions and so, an isotropic point:
$$
K({\frak a})\hspace{3mm}
\stackrel{2}{\sim}\hspace{3mm}\operatornamewithlimits{\ast}_{(E,\alpha)\not\in {\cal H}({\frak a})}E
\hspace{5mm}\stackrel{2}{\sim}\hspace{5mm}
\operatornamewithlimits{\ast}_{(E,1)\not\in {\cal H}({\frak a})}E.
$$
In the case of an isotropic point, it recovers the original point.

\begin{prop}
 For any field extension $F/k$, 
 $K({\frak a}_F)\stackrel{2}{\sim}F$.
\end{prop}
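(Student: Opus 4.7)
The plan is to unwind the definition
$K({\frak a}_F)\stackrel{2}{\sim}\operatornamewithlimits{\ast}_{(E,1)\not\in{\cal H}({\frak a}_F)}E$
via Proposition \ref{Ha-iso-descr} applied to $\alpha=1\in k^M_0(E)/2=\ff_2$. Since $1$ is the nonzero element of $\ff_2$, its restriction to any larger field remains nonzero, so condition $(a)$ of that proposition is vacuous. Hence $(E,1)\in{\cal H}({\frak a}_F)$ reduces to the statement that a smooth projective model $P$ of $E/k$ is $2$-anisotropic over $F$; equivalently, $(E,1)\not\in{\cal H}({\frak a}_F)$ iff $P_F$ carries a zero-cycle of odd degree. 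Thus the proposition amounts to showing that the $\ast$-compositum of $k(P)$ over all smooth projective $P/k$ with $P_F$ $2$-isotropic is $\stackrel{2}{\sim}$-equivalent to $F$.

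By the definition of $\stackrel{2}{\sim}$, it suffices to prove that for every smooth projective $k$-variety $X$, $X_{K({\frak a}_F)}$ is $2$-isotropic iff $X_F$ is $2$-isotropic. The forward direction is immediate: if $X_F$ is $2$-isotropic, then $E=k(X)$ appears among the indices in the free compositum defining $K({\frak a}_F)$, so the natural projection from (the function field of) the product of models to $X$ yields a $K({\frak a}_F)$-rational point of $X$.

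For the converse, an odd-degree zero-cycle on $X_{K({\frak a}_F)}$ is defined by finitely many equations, so it descends to a finite sub-compositum $L=k(W)$, where $W=P_1\times\cdots\times P_r$ is a product of finitely many of the indexed models. Each $(P_i)_F$ carries an odd-degree zero-cycle; their external product produces an odd-degree zero-cycle on $W_F$, so $W_F$ is $2$-isotropic, and therefore $F(W)\stackrel{2}{\sim}F$ by the standard fact that passing to the function field of an $F$-$2$-isotropic variety preserves the $\stackrel{2}{\sim}$-class of $F$. Extending the $2$-isotropy of $X_L$ along $L\hookrightarrow F(W)$ gives $2$-isotropy of $X_{F(W)}$, and transport along $F(W)\stackrel{2}{\sim}F$ yields $2$-isotropy of $X_F$.

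The main obstacle is this converse direction: assembling three standard ingredients - the finiteness descent to a finite sub-compositum, the multiplicativity of zero-cycle degrees on products of smooth projective varieties, and the preservation of $\stackrel{2}{\sim}$-classes under passing to function fields of $2$-isotropic varieties - each routine in isolation but requiring careful combination to close the argument.
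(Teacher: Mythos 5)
Your proposal is correct and follows essentially the same route as the paper: identify, via the description of ${\cal H}({\frak a}_F)$ (Proposition \ref{Ha-iso-descr} with $\alpha=1$, for which condition $a)$ is indeed vacuous), that the index set consists of the $k(P)$ with $P_F$ $2$-isotropic, and then compare the resulting free compositum with $F$. The paper's own proof leaves the final $\stackrel{2}{\sim}$-comparison implicit (citing only that $F$ is a colimit of finitely generated subextensions), whereas you spell it out by descending an odd-degree zero-cycle to a finite sub-compositum $k(P_1\times\cdots\times P_r)$ and using that $F(W)\stackrel{2}{\sim}F$ for $W_F$ $2$-isotropic; these are exactly the standard facts the paper takes for granted, so the two arguments agree in substance.
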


\begin{proof}
By Proposition
\ref{GH-iso-empty}, 
$$
{\cal H}({\frak a}_{F})=\{(E=k(P),\alpha)\in\op{Pure}\,|\,\text{ either }\alpha_{F(P)}=0,\text{ or }P_F\text{ is anisotropic}\}.
$$
In particular, $(E=k(P),1)\not\in{\cal H}({\frak a}_F)$ if and only if $P_F$ is isotropic.
Thus, $K({\frak a}_F)$ is the composite of $k(P)$
for varieties $P$ which become isotropic over $F$. As $F$ is a colimit 
$\op{colim} k(Q)$ of finitely generated extensions, we get:
$K({\frak a}_F)\stackrel{2}{\sim}F$
 \Qed
\end{proof}

It appears that, for any point ${\frak a}$,
${\cal H}({\frak a})$ always contains ${\cal H}$ of some
isotropic points.

\begin{prop}
 \label{H-any-iso}
 Let $F=\op{colim}F_{\lambda}$, where $F_{\lambda}=K({\frak a})(Q_{\lambda})$, where $(k(Q_{\lambda}),1)\in{\cal G}({\frak a})$ (in other words, $M(Q_{\lambda})^{\perp}\subset{\frak a}$).  Then 
 ${\cal H}({\frak a}_F)\subset{\cal H}({\frak a})$.
\end{prop}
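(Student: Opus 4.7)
Let $(k(P),\alpha)\in\calh(\frak{a}_F)$. By Proposition \ref{Ha-iso-descr}, either (a) $\alpha_{F(P)}=0$ or (b) $P_F$ is anisotropic. I aim to conclude $\wt{M}_{\alpha,Y_P}\in\frak{a}$ in each case.

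\textbf{Case (a).} Since $\alpha$ is defined at a finite stage of the filtered colimit $F=\op{colim}F_{\lambda}$ with $F_{\lambda}=K(\frak{a})(Q_{\lambda})$, one gets $\alpha_{F_{\lambda}(P)}=0$ for some $\lambda$. Descending through $K(\frak{a})=\op{colim}_{(k(R'),1)\notin\calh(\frak{a})}k(R')$ and using $\calg(\frak{a})\cup\calh(\frak{a})=\op{Pure}$, I produce a finitely generated $k(R')\subset K(\frak{a})$ with $(k(R'),1)\in\calg(\frak{a})$, hence $M(R')^{\perp}\subset\frak{a}$, such that $\alpha_{k(R'\times Q_{\lambda}\times P)}=0$. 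Proposition \ref{triv-erRm} gives $\wt{M}_{\alpha,Y_P}|_{k(R'\times Q_{\lambda})}=0$, and compactness of $\wt{M}_{\alpha,Y_P}$ yields a non-empty open $U\subset R'\times Q_{\lambda}$ with $\wt{M}_{\alpha,Y_P}\otimes M(U)=0$. The hypotheses $M(R')^{\perp},M(Q_{\lambda})^{\perp}\subset\frak{a}$ combine via primality of $\frak{a}$ to give $M(R'\times Q_{\lambda})^{\perp}\subset\frak{a}$, and a Gysin-filtration argument on $R'\times Q_{\lambda}\setminus U$ upgrades the generic-point vanishing to $\wt{M}_{\alpha,Y_P}\otimes M(R'\times Q_{\lambda})=0$, placing $\wt{M}_{\alpha,Y_P}\in M(R'\times Q_{\lambda})^{\perp}\subset\frak{a}$.

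\textbf{Case (b).} Following the proof of Proposition \ref{GH-iso-empty}, invoke \cite[Theorem 2.5]{VPS} to obtain a pure symbol $\beta\in k^{M}_{*}(k'(P))$, with $k'=k(A)$ purely transcendental, such that $P_L$ is isotropic iff $\beta_{\wt{L}}=0$. Thus $\beta_{\wt{F}}\neq 0$ while $\beta_{\wt{k}(P)}=0$; the latter yields $\wt{M}_{\beta,Y'}|_{k(P)}=0$ by Proposition \ref{triv-erRm}, which, combined with the extension structure of $\wt{M}_{\alpha,Y_P}$ as a direct summand of $M(Q_{\alpha})$ together with shifted copies of $M(Y_P)$, forces $\wt{M}_{\beta,Y'}\otimes\wt{M}_{\alpha,Y_P}=0$. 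Applying the Case (a) descent to the non-vanishing of $\beta$ over flexible closures of the finitely generated subfields cofinal in $F$ shows $(k'(P),\beta)\in\calg(\frak{a})$, hence $\wt{M}_{\beta,Y'}^{\perp}\subset\frak{a}$, and therefore $\wt{M}_{\alpha,Y_P}\in\wt{M}_{\beta,Y'}^{\perp}\subset\frak{a}$.

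\textbf{Main obstacle.} The most delicate step is the Gysin-filtration promotion in Case (a): bridging $\wt{M}_{\alpha,Y_P}\otimes M(U)=0$ for a single open $U$ to the full tensor vanishing against $M(R'\times Q_{\lambda})$. The naive Gysin triangle contributes motives of lower-dimensional strata of $R'\times Q_{\lambda}\setminus U$, on which $\alpha$ may not a priori vanish; surmounting this likely requires either iterating the descent on these strata or a continuity argument exploiting the compactness of $\wt{M}_{\alpha,Y_P}$ together with the full orthogonality $M(R'\times Q_{\lambda})^{\perp}\subset\frak{a}$. An alternative cleaner strategy is to establish the stronger inclusion $\frak{a}_F\subset\frak{a}$ by verifying $\calg(\frak{a}_F)\subset\calg(\frak{a})$ via Proposition \ref{iso-ideal-descr}, but this merely relocates the essential difficulty (via Corollary \ref{orthog-tM-gen-pt}) to translating a $k(R'\times Q_{\lambda})$-rational point of $P'$ into an orthogonality statement on the compact object in $\wt{M}_{\gamma,Y_{P'}}^{\perp}$.
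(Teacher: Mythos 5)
Your Case (a) stops exactly where the real content lies, and the gap you flag is genuine as written: you never establish $\wt{M}_{\alpha,Y}\otimes M(R'\times Q_{\lambda})=0$, and your proposed fixes (iterated descent on strata, a "continuity" argument) are not carried out. The point you are missing is that no Gysin stratification is needed: the relevant vanishing is detected at the generic point because the Pfister quadric fibration is proper, so isotropy at the generic point of the base specializes to every point, and the reduced \v{C}ech motive of the fibration over the whole base vanishes --- this is precisely Lemma \ref{rCss-gen-fib} (equivalently, the $(\leftarrow)$ argument in Proposition \ref{triv-erRm}), and it is how the paper obtains $\wt{M}_{\alpha,Y}\otimes M(Q)\otimes M(R)=0$ directly from $\alpha_{k(P\times Q\times R)}=0$; the worry that ``$\alpha$ may not vanish on lower strata'' does not arise. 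The paper's bookkeeping is also different from yours and avoids your other weak step: instead of needing $M(R'\times Q_{\lambda})^{\perp}\subset{\frak a}$ (your ``combine via primality'' silently requires $M(R')\notin{\frak a}$, which you never invoke, though it is available since the fields composing $K({\frak a})$ satisfy $(k(R),1)\notin\calh({\frak a})$), the paper chooses $R$ with $(k(R),1)\notin\calh({\frak a})$, so $M(R)\notin{\frak a}$, deduces $\wt{M}_{\alpha,Y}\otimes M(R)\in M(Q)^{\perp}\subset{\frak a}$, and concludes $\wt{M}_{\alpha,Y}\in{\frak a}$ by primality.

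Your Case (b) does not work. The decisive claim that $(k'(P),\beta)\in\calg({\frak a})$, i.e. $\wt{M}_{\beta,Y'}^{\perp}\subset{\frak a}$, has no justification: the non-vanishing $\beta_{\wt{F}}\neq 0$ is information about the isotropic ideal ${\frak a}_F$ (via Proposition \ref{Ha-iso-descr}), not about the arbitrary prime ${\frak a}$, and ``applying the Case (a) descent to non-vanishing'' is not a descent statement at all --- vanishing of a symbol descends to a finite stage of a colimit, non-vanishing does not produce any inclusion of perps into ${\frak a}$. (The analogous move in Proposition \ref{GH-iso-empty} works only because there the target ideal is ${\frak a}_F$ itself.) The paper's argument for this case is two lines and uses only the definition of $K({\frak a})$: $P_F$ anisotropic forces $P_{K({\frak a})}$ anisotropic; if $(k(P),1)\notin\calh({\frak a})$ then $k(P)$ is one of the fields whose composite is $K({\frak a})$, making $P$ isotropic over it --- contradiction; hence $(k(P),1)\in\calh({\frak a})$, and then $(k(P),\alpha)\in\calh({\frak a})$ since $\calh$ is stable under divisibility (Proposition \ref{divisible-Mt}, Theorem \ref{Gbar-H-LRCM}). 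You should replace your Case (b) by this argument and close Case (a) with Lemma \ref{rCss-gen-fib}.
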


\begin{proof}
 Recall that $(k(P),\alpha)\in{\cal H}({\frak a}_F)
 \Leftrightarrow\,\text{either}\,\alpha_{F(P)}=0,\,\,\text{or}\,\,P_F\,\text{is anisotropic}$. 
 
 If $P_F$ is anisotropic, then $P_{K({\frak a})}$ is
 anisotropic $\Rightarrow$ $(k(P),1)\in{\cal H}({\frak a})$ $\Rightarrow$ $(k(P),\alpha)\in{\cal H}({\frak a})$.
 
 If $\alpha_{F(P)}=0$, then $\alpha_{K({\frak a})(P\times Q)}=0$, for some $Q$ with $M(Q)^{\perp}\subset{\frak a}$.
 So, there exists $R$, such that $\alpha_{k(P\times Q\times R)}=0$
 and $(k(R),1)\not\in{\cal H}({\frak a})$, that is,
 $\wt{M}_{\{\}}\otimes M(R)\not\in{\frak a}$, in particular, $M(R)\not\in{\frak a}$.
 The former implies that $\wt{M}_{\alpha,Y}\otimes
 M(Q)\otimes M(R)=0$ and so,
 $\wt{M}_{\alpha,Y}\otimes M(R)\in  M(Q)^{\perp}
 \subset{\frak a}$. Since $M(R)\not\in{\frak a}$ and 
 ${\frak a}$ is prime, we have: 
 $\wt{M}_{\alpha,Y}\in{\frak a}$.
 Hence, $(k(P),\alpha)\in{\cal H}({\frak a})$.
 \Qed
\end{proof}

\begin{rem}
 \begin{itemize}
  \item[$(1)$]
The conditions of the Proposition are satisfied for
$Q=\op{Spec}(k)$, so ${\cal H}({\frak a}_{K({\frak a})})\subset
{\cal H}({\frak a})$.
  \item[$(2)$]
 Hypothetically, Proposition \ref{H-any-iso} describes
 all isotropic points, whose ${\cal H}$ is contained in ${\cal H}({\frak a})$.
Note that, in contrast to Corollary \ref{incl-iso-pts}, this condition doesn't imply a specialisation relation.
 \end{itemize}
\end{rem}

\begin{defi}
We say that a point ${\frak a}$ of the Balmer spectrum is of a "boundary type", if 
$$\ov{{\cal G}}({\frak a})={\cal H}({\frak a}).$$
\end{defi}

\begin{exa}
 \label{exa-bt-iso-et}
 \begin{itemize}
  \item[$(1)$] Any isotropic point ${\frak a}_F$ is of a {\it boundary type} by Proposition
\ref{GH-iso-empty}.
  \item[$(2)$] Let ${\frak a}_{et}$ be the \'etale point. Since ${\frak a}_{et}=\la\op{Cone}(\tau)\ra=\la\wt{M}_{\{\}}\ra$, we see that ${\cal H}({\frak a}_{et})=\op{Pure}$. Denote :
$$
\kker(\ov{k}/k):=\{(E=k(P),\alpha)\in\op{Pure}\,|\,\alpha_{\ov{k}(P)}=0\}.
$$
If $(E=k(P),\alpha)\in\kker(\ov{k}/k)$, then there exists $Q$, such that $\alpha_{k(P\times Q)}=0$, 
and so, $M(Q)\in (\wt{M}_{\alpha,Y})^{\perp}$. Since a Tate-motive splits off from $M(Q)_{\ov{k}}$, we have: $M(Q)\not\in {\frak a}_{et}$ $\Rightarrow$ 
$(\wt{M}_{\alpha,Y})^{\perp}\not\subset {\frak a}_{et}$ $\Rightarrow$
$(E,\alpha)\in\ov{{\cal G}}({\frak a}_{et})$. Thus,
$\kker(\ov{k}/k)\subset\ov{{\cal G}}({\frak a}_{et})\subset\op{Pure}$.
Moreover, both inclusions are proper. For the right one, it is enough to observe that 
$\ov{{\cal G}}({\frak a}_{et})$ doesn't contain "units". Indeed, for $(k(P),1)$, we may choose
$\wt{M}_{\alpha,Y}=M(P)\otimes\wt{M}_{\{\}}$. Here the functor $\otimes\wt{M}_{\{\}}$
is conservative and so, $(\wt{M}_{\alpha,Y})^{\perp}=M(P)^{\perp}$. The latter ideal is contained
in ${\frak a}_{et}$, since a Tate-motive splits off from $M(P)$ in etale realisation. Hence, 
$(k(P),1)\not\in\ov{{\cal G}}({\frak a}_{et})$.
For the left one, consider a numerically trivial Chow motive $N$. By \cite[Corollary 4.6]{VPS},
the numerical triviality of $N$ is controlled by some pure symbol over the flexible closure, that is,
there exists a purely transcendental extension $k(A)/k$ and a pure symbol $\alpha\in K^M_*(k(A))/2$, such that, for any extension $L/k$, the motive $N_L$ is numerically trivial if and only if $\alpha_{L(A)}\neq 0$. In addition, $N_{k(A)}\otimes\wt{M}_{\alpha}=0$. Let now $N$ be such that $N_{\ov{k}}$ is still numerically trivial and
$N$ doesn't vanish in the etale realisation. Then, for the respective symbol $\alpha$, we have:
$\alpha\not\in\kker(\ov{k}/k)$ and $N_{k(A)}\otimes\wt{M}_{\alpha}=0$. 
By Corollary \ref{orthog-tM-gen-pt}, $N\in\wt{M}_{\alpha,Y}^{\perp}$. Since $N$ doesn't vanish
in the etale realisation, we get: $\alpha\in\ov{{\cal G}}({\frak a}_{et})$ and so,
the left inclusion is a proper one. As an example of such a motive $N$, we may choose the
middle part of the motive of an elliptic curve without complex multiplication - see 
\cite[Example 2.13]{Iso}. Such a motive exists over any field $k$ of characteristic zero.
Another choice is a {\it torsion motive} in the sense of \cite{TM}, i.e. a Chow motive whose identity map is annihilated by a natural number. Such motives $\hat{N}$ exist, in particular, as direct summands of Burniat surfaces and $id_{\hat{N}}$ is killed by $2$, in this case - see \cite{GO}. The Burniat surface and the projector are always defined over some finite extension $L/k$ and we
may consider the $2$-torsion motive $N=\pi_{\#}(\hat{N})$, where $\pi:\op{Spec}(L)\row\op{Spec}(k)$. The singular cohomology of $N$ with $\ff_2$-coefficients is non-trivial and so, $N\not\in{\frak a}_{et}$. On the other hand, $N$ is still torsion and so, numerically trivial over $\ov{k}$.
Thus,
$$
\kker(\ov{k}/k)\subsetneq\ov{{\cal G}}({\frak a}_{et})
\subsetneq\op{Pure}.
$$
In particular, ${\frak a}_{et}$ is not of a boundary type.
 \end{itemize}
\end{exa}

In conclusion, let me formulate a couple of natural questions:

\begin{que}
 \begin{itemize}
  \item[$(1)$] Do ${\cal G}$-${\cal H}$-invariants distinguish the points of the Balmer spectrum?
  \item[$(2)$] Are the closed points of the spectrum exactly the points of the boundary type?
 \end{itemize}
\end{que}

\end{document}